\documentclass[10pt]{amsart}
\usepackage{a4wide}
\usepackage{times}
\usepackage{amsmath,amssymb,url,stmaryrd,enumerate,bbm,enumerate}
\usepackage[francais]{babel}
\usepackage[latin1]{inputenc}
\usepackage[all]{xy}

\usepackage{xr}

\usepackage{todonotes}

\externaldocument{../Courbe/Courbe_Chapitre1}
\externaldocument{../Courbe/Courbe_Chapitre2}
\externaldocument{../Courbe/Courbe_Chapitre3}
\externaldocument{../Courbe/Courbe_Chapitre4}
\externaldocument{../Courbe/Courbe_Chapitre5}
\externaldocument{../Courbe/Courbe_Chapitre6}
\externaldocument{../Courbe/Courbe_Chapitre7}
\externaldocument{../Courbe/Courbe_Chapitre8}
\externaldocument{../Courbe/Courbe_Chapitre9}
\externaldocument{../Courbe/Courbe_Chapitre10}
\externaldocument{../Courbe/Courbe_Chapitre11}



\usepackage{hyperref}

\usepackage{mathrsfs}

\usepackage{yhmath}

\def\Fq{\mathbb{F}_q}
\def\Fqb{\overline{\mathbb{F}}_q}

\def\L{\mathscr{L}}
\def\La{\Lambda}
\def\A{\text{{\bf A}}}   

\def\O{\mathcal{O}} 
\def\F{\mathscr{F}}

\def\*{^\times }

\def\G{\mathscr{G}}
 
\def\Gm{\mathbb{G}_m}         
\def\l{\lambda}

\def\BB{\boldsymbol{B}}

\def\a{\alpha}

\def\s{\sigma}
\def\ph{\varphi}

\def\ssi{\Leftrightarrow}

\def\impl{\Rightarrow}

\def\drt{\rightarrow}
\def\ldrt{\longrightarrow}
\def\Q{\mathbb{Q}}

\def\Ql{\mathbb{Q}_\ell}
\def\Qlb{\overline{\mathbb{Q}}_\ell}
\def\Qp{\mathbb{Q}_p}

\def\Z{\mathbb{Z}}

\def\Gal{\text{Gal}}
\def\={\! = \!}

\def\spec{\text{Spec}}
\def\spf{\text{Spf}}

\def\E{\mathscr{E}}

\def\limp{\underset{\longleftarrow}{\text{ lim }}\;}
\def\limi{\underset{\longrightarrow}{\text{ lim }}\;}

\def\iso{\xrightarrow{\;\sim\;}}

\def\End{\text{End}}

\def\GL{\hbox{GL}}

\def\xrig{\xrightarrow}

\def\M{\mathcal{M}}

\def\X{\mathfrak{X}}
\def\GG{\Gamma}

\def\bc{\backslash}

\def\spa{\text{Spa}}

\def\et{\rm{\acute{e}t}}

\def\Div{\mathrm{Div}}

\def\DD{\mathbb{D}}

\def\LT{\mathscr{LT}}

\def\<<{\langle\langle}
\def\>>{\rangle\rangle}

\def\llparent{( \! ( }
\def\rrparent{) \! ) }
\def\Perf{\text{Perf}}
\def\Eb{\overline{E}}

\def\Pic{\mathscr{P}ic}
\def\BBB{\mathbb{B}}

\def\Ebc{\widehat{\vphantom{\rule{2pt}{7.2pt}} \smash{\overline{E}}}}

\entrymodifiers={+!!<0pt,\fontdimen22\textfont2>}

\newcommand{\mysetminusD}{\hbox{\tikz{\draw[line width=0.6pt,line cap=round] (3pt,0) -- (0,6pt);}}}
\newcommand{\mysetminusT}{\mysetminusD}
\newcommand{\mysetminusS}{\hbox{\tikz{\draw[line width=0.45pt,line cap=round] (2pt,0) -- (0,4pt);}}}
\newcommand{\mysetminusSS}{\hbox{\tikz{\draw[line width=0.4pt,line cap=round] (1.5pt,0) -- (0,3pt);}}}

\newcommand{\mysetminus}{\mathbin{\mathchoice{\mysetminusD}{\mysetminusT}{\mysetminusS}{\mysetminusSS}}}

\renewcommand{\setminus}{\mysetminus}

 \DeclareMathSymbol{B}{\mathalpha}{operators}{`B}

\author{Laurent Fargues}
\address{Laurent Fargues, CNRS, Institut de Math\'ematiques de Jussieu, 4 place Jussieu 75252 Paris}
\email{laurent.fargues@imj-prg.fr}
\thanks{L'auteur a bénéficié du support du projet ANR-14-CE25 "PerCoLaTor"}

\begin{document}

\title{Simple connexité des fibres d'une application d'Abel-Jacobi et corps de classe local}

\date{\today}

\maketitle

\renewcommand\labelitemi{\textbullet}

\newtheorem{theo}{Théorème}[section]
\newtheorem*{theon}{Théorème}

 \newtheorem{prop}[theo]{Proposition}

\newtheorem{coro}[theo]{{Corollaire}}

\newtheorem{lemme}[theo]{Lemme}
\newtheorem{question}[theo]{Question}

\newtheorem{defi}[theo]{Définition}
\newtheorem{exem}[theo]{Exemple}
\newtheorem{conj}[theo]{Conjecture}
\newtheorem{rema}[theo]{Remarque}

\markright{SIMPLE CONNEXIT\'E DES FIBRES D'UNE APPLICATION D'ABEL JACOBI ET CORPS DE CLASSE}

\begin{abstract}
On donne une démonstration du type Langlands géométrique de la conjecture de géométrisation de la correspondance de Langlands locale de l'auteur pour $\GL_1$. Pour cela on étudie en détails un certain morphisme d'Abel-Jacobi dont on montre que c'est une fibration pro-étale localement triviale en diamants simplement connexes en grand degré . Ces diamants sont des espaces de Banach-Colmez absolus épointés que l'on étudie en détails.
\end{abstract}

\setcounter{tocdepth}{1}
\tableofcontents

\section*{Introduction}

Cet article concerne le cas abélien de la conjecture de type Langlands géométrique pour la correspondance de Langlands locale  formulée par l'auteur (\cite{Geometrisation}, \cite{GeometrizationReview}). Rappelons que cette conjecture affirme que si $E$ est un corps local, $G$ un groupe réductif sur $E$ et $$\ph:W_E\drt \,^L G $$  un paramètre de Langlands discret on devrait pouvoir construire un faisceau pervers $\F_\ph$ sur le champ
$$
\text{Bun}_{G,\Fqb}
$$
des $G$-fibrés sur la courbe que l'on a introduite dans notre travail en commun avec Jean-Marc Fontaine (\cite{Courbe}). Ce faisceau pervers devrait satisfaire de nombreuses propriétés et en particulier construire les L-paquets locaux munis de leur structure interne (i.e. une paramétrisation de chaque élément du L-paquet) associés à $\ph$ pour toute forme intérieure étendue pure de $G$. Le champ $\text{Bun}_G$ lui est un champ perfectoïde pour la topologie pro-étale de Scholze. C'est un objet qui vit dans le monde des diamants introduits par Scholze (\cite{ScholzeBerkeley}, \cite{ScholzeCohomologyDiamonds}).
\\

Lorsque $G=\GL_1$ on peut déduire la conjecture de la théorie du corps de classe local. Néanmoins on cherche une preuve de ce résultat indépendante du type corps de classe géométrique. Si $X$ est une courbe propre et lisse le point principal dans la construction du faisceau automorphe associé à un système local sur $X$ est le fait que pour $d\gg 0$ le morphisme d'Abel-Jacobi
\begin{eqnarray*}
\Div^d_X & \ldrt & \text{Pic}^d_X \\
D & \longmapsto & \O(D)
\end{eqnarray*}
est une fibration localement triviale en variétés algébriques simplement connexes (des espaces projectifs).
\\
 Dans cet article on démontre un résultat analogue dans le cadre de notre conjecture et on en déduit la conjecture pour $\GL_1$ indépendamment de la théorie du corps de classe local. Cela redémontre en particulier celle-ci sous la forme du théorème de Kronecker-Weber local.
\\

Voici une description plus détaillée des principaux résultats. On note $\Fq$ le corps résiduel de notre corps local $E$. 
On définit dans la section \ref{sec: diamant de Hilbert} l'espace de modules des diviseurs effectifs de degré $d>0$ sur la courbe
$$
\Div^d
$$ 
dont on démontre que c'est un diamant. Plus précisément (prop. \ref{prop: identidification entre Div1 et Spa divise})
$$
\Div^1=\spa (E)^\diamond/\ph^\Z
$$
ce qui traduit le fait que les débasculements d'un $\Fq$-espace perfectoïde $S$ sur $E$ donnent des diviseurs de Cartier  de degré $1$ sur la courbe $X_S$. 
 On montre de plus (prop. \ref{prop: Div d comme quotient par le groupe symetrique}) que pour $d>0$ 
$$
\Div^d = (\Div^1)^d/\mathfrak{S}_d
$$
comme quotient pro-étale. Il s'agit d'une nouvelle application de notre résultat de factorisation des éléments primitifs obtenu avec Fontaine qui est la clef de voûte de \cite{Courbe}. Le diamant $\Div^d$ 
admet une description alternative comme espace projectif sur un espace de Banach-Colmez absolu $\BB^{\ph=\pi^d}$,
$$
\Div^d=\BB^{\ph=\pi^d}\setminus \{0\}/\underline{E}^\times
$$
via lequel le morphisme d'Abel-Jacobi en degré $d$ est  donné par 
$$
\text{AJ}^d: \BB^{\ph=\pi^d}\setminus \{0\}/\underline{E}^\times \ldrt [ \bullet /\underline{E}^\times ].
$$
Ces espaces $\BB^{\ph=\pi^d}$ ne sont pas des espaces de Banach-Colmez usuels, ce ne sont pas des diamants mais des diamants absolus, un nouveau concept dont nous avons besoin dans ce texte (cf. sec. \ref{sec: diamants absolus}). Il est  remarquable qu'une fois épointés ces diamants absolus deviennent des diamants (prop. \ref{prop: BC epointe est un diamant}). On constate donc que $\text{AJ}^d$ est une fibration pro-étale localement triviale de fibre $\BB^{\ph=\pi^d}\setminus \{0\}$. Le résultat principal de ce texte, qui est nettement plus fort que ce dont nous avons besoin afin de développer le programme de Langlands géométrique pour $\GL_1$, 
 est alors le suivant (théo. \ref{theo: theoreme principal de simple connexite}).
 
 \begin{theon}
Pour $d>1$, resp. $d>2$ si $E|\Qp$, le diamant $\BB_{\Fqb}^{\ph=\pi^d}\setminus \{0\}$ est simplement connexe au sens où tout revêtement étale fini connexe est trivial.  
 \end{theon}
 
 La preuve de ce résultat est donnée dans les sections \ref{sec: le cas d egales car} et \ref{sec: le cas d inegales}.  Le cas d'égales caractéristiques est particulièrement plus simple puisqu'alors le diamant $\BB^{\ph=\pi^d}\setminus \{0\}$ est un espace perfectoïde. Cependant l'analyse de sa preuve est particulièrement éclairante puisque celle-ci repose au final sur le théorème de pureté de Zariski-Nagata. 
La preuve que nous donnons dans le cas d'égales caractéristiques consiste essentiellement à démontrer un tel résultat de pureté pour l'inclusion
$$
\BB^{\ph=\pi^d}\setminus \{0\}\hookrightarrow \BB^{\ph=\pi^d}
$$
et à dévisser ce résultat en un résultat de pureté en géométrie rigide \og usuelle\fg{} (théo. \ref{theo: purete en geo rigide}).
\\

Une fois ce résultat établi le corps de classe géométrique se décrit de la façon suivante. On constate (sec. \ref{sec:sys locaux sur Div1}) que 
$$
\og \pi_1 (\Div^1_{\Fqb}) = W_E \fg{}
$$
du moins du point de vue dual des $\Qlb$-systèmes locaux 
(les guillemets sont là pour signifier que cela n'a pas vraiment de sens puisqu'on ne dispose pas à priori d'un bonne théorie du $\pi_1$ dans ce contexte)
i.e. les L-paramètres $\ell$-adiques correspondent aux systèmes locaux sur $\Div^1_{\Fqb}$. Partant d'un tel paramètre abélien $\ph:W_E\drt \Qlb^\times$, si $\E_\ph^{(1)}$ désigne le $\Qlb$-système local sur $\Div^1_{\Fqb}$ associé, on construit son symétrisé pour $d>0$
$$
\E_\ph^{(d)}= \big ( \Sigma^{d}_* \E_\ph^{(1)\boxtimes d} \big )^{\mathfrak{S}_d}
$$
dans la section \ref{sec:construction sym} où $\Sigma^d:(\Div^1)^d\drt \Div^d$ est le morphisme somme de $d$-diviseurs de degré $1$. Par application du théorème précédent celui-ci 
descend le long de $\text{AJ}^d$ en $\F_{\ph}^{(d)}$ sur $\Pic^{d}$. Il s'agit du faisceau $\F_{\ph |\Pic^d}$ de notre conjecture. Il s'étend alors automatiquement en tous les degrés en utilisant les structures monoïdales de $\Div$ et $\Pic$. Cela prouve que $\F_{\ph}^{(1)}$ descend le long de $\text{AJ}^1$ et donc que $\ph$ se factorise via l'application de réciprocité d'Artin (prop. \ref{prop: reciprocite Artin}).
\\

{\it Remerciements: J'aimerais remercier Peter Scholze et Arthur-César Le Bras pour des discussions sur le sujet. Je remercie également Werner Lütkebohmert de m'avoir expliqué ses résultats d'extension de faisceaux cohérents en géométrie rigide et Ofer Gabber pour des discussions concernant la section \ref{sec: purete en geo rigide}.}

\section{Rappels sur le cas \og classique\fg{}}

Soit $X$ une courbe propre et lisse sur un corps $k$.  Pour $d\geq 1$ on note 
$$
\Div^d = X^d/\mathfrak{S}_d
$$
le schéma de Hilbert des diviseurs de Cartier effectifs de degré $d$ sur $X$. 
On note également 
$$
\Pic = \coprod_{d\in \Z} \Pic^d 
$$
le champ de Picard de $X$ 
et 
$$
\text{Pic}=\coprod_{d\in \Z} \text{Pic}^d 
$$
le schéma de Picard de $X$ qui est l'espace de modules grossier de $\Pic$, $\text{Pic}^0$ étant la Jacobienne de $X$. Le morphisme 
$$
\Pic \ldrt \text{Pic} 
$$
est une $\Gm$-gerbe qui est scindée si $X$ possède un point $k$-rationnel, auquel cas $\Pic \simeq \big [\text{Pic}/\Gm\big ]$.
\\

Soit maintenant $\E$ un $\Qlb$-système local étale de rang $1$ sur $X$. Dans ce cadre là le programme de Langlands géométrique s'attache à construire un  $\Qlb$-système local $\F$ de rang $1$ sur $\text{Pic}$ satisfaisant certaines propriétés. Voici les étapes de sa construction  (\cite{LaumonEisenstein} sec. 2).
\\

La première étape  consiste à former le $\Qlb$-faisceau sur $\Div^d$, $d>0$,
$$
\E^{(d)}=\big (\Sigma^d_* \E^{\boxtimes d}\big )^{\mathfrak{S}_d}
$$
où $\Sigma^d:(\Div^1)^d\drt \Div^d$ est le morphisme somme de $d$ diviseurs de degré $1$. 
C'est un $\Qlb$-système local de rang $1$ (en toutes généralité, si $\E$ n'est plus de rang $1$, il s'agit d'un faisceau pervers) qui correspond à $\E$ via l'égalité $\pi_1 (X^d/\mathfrak{S}_d)=\pi_1(X)^{ab}$ lorsque $d>1$.
\\

La seconde étape est la suivante. 
Pour $d>0$ il y a un morphisme d'Abel-Jacobi 
\begin{eqnarray*}
AJ^d:  \Div^d &\ldrt &  \text{Pic}^d\\
D & \longmapsto & [\O(D)].
\end{eqnarray*}
Supposons $d>2g-2$, $g$ étant le genre de $X$. Si $S$ est un $k$-schéma de type fini et $\L$ un fibré vectoriel sur $X\times S$ fibre à fibre sur $S$ de degré $d$ alors d'après le théorème de Riemann Roch $R^1 f_*\L=0$ où $f:X\times S\drt S$. Le complexe parfait $Rf_*\L$ est donc un fibré vectoriel égal à $f_*\L$. 
Cette construction définit un fibré vectoriel $\mathcal{M}$ sur $\Pic^d$ et $\Div^d$ s'identifie à l'espace projectif sur $\M$. 
Plus précisément, $\End (\M)$ descend le long de $\Pic^d\drt \text{Pic}^d$ en une algèbre d'Azumaya $\mathcal{A}$ sur $\text{Pic}^d$ de classe dans le groupe de Brauer de $\text{Pic}^d$ la classe de la $\Gm$-gerbe $\Pic^d\drt \text{Pic}^d$. Dès lors
$$
\text{AJ}^d:\Div^d = \text{SB} ( \mathcal{A})\drt \text{Pic}^d
$$
la variété de Severi-Brauer associée à $\mathcal{A}$. La simple connexité des espaces projectifs implique alors que $\E^{(d)}$ descend le long de $\text{AJ}^d$ (\cite{SGA1} exp. X théo. 1.3) en un unique système local de rang $1$ 
$
\F^{(d)}
$
sur $\text{Pic}^{d}$.

\begin{rema}
Puisque $\Pic^d\drt \text{Pic}^d$ est une $\Gm$-gerbe et que $\Gm$ est connexe on a $\pi_1 (\Pic^d)= \pi_1 ( \text{Pic}^d)$. Il s'en suit que l'on peut remplacer $\text{Pic}$ par $\Pic$ partout. C'est d'ailleurs ce que l'on va faire plus tard car dans notre cas l'espace de modules grossier sera trivial et l'on aura affaire à une $\underline{E}^\times$-gerbe avec $E^\times$ totalement discontinu (cf. sec. \ref{sec: diamant de Hilbert}).
\end{rema}

La dernière étape de la construction de $\F$ consiste à utiliser la structure de groupe sur $\Pic$ afin d'étendre le système local $\coprod_{d>2g-2} \F^{(d)}$ sur $\coprod_{d>2g-2} \text{Pic}^d$ à tous les degrés. Pour cela on constate que le système local précédent est compatible à cette loi de monoïde sur $\coprod_{d>2g-2} \text{Pic}^d$. Il s'étend alors naturellement à tout $\text{Pic}$ en imposant que l'extension soit encore compatible à cette loi de groupe. 
\\

Traduite du point de vue dual des groupes fondamentaux la construction précédente se résume ainsi. On veut montrer que 
$$
\pi_1 (\text{AJ}^1):\pi_1 (X)^{ab} \iso \pi_1 (\text{Pic}^1).
$$
Pour cela on construit un isomorphisme 
$$
\pi_1 (X)^{ab} = \pi_1 ( X^d/\mathfrak{S}_d) 
$$
pour $d>1$ (\cite{SGA1} chap. IV Rem. 5.8) qui est le dual de l'opération $\E\mapsto \E^{(d)}$ précédente. 
 On montre alors que pour $d\gg 0$
$$
\pi_1 (\text{AJ}^d) : \pi_1 (X^d/\mathfrak{S}_d) \iso \pi_1 (\text{Pic}^d)=\pi_1 (\text{Pic}^1)
$$
en montrant que $\text{AJ}^1$ est une fibration étale localement triviale de fibre simplement connexe couplé à la suite exacte de Serre.
\\

Dans la suite on va mener à bien ces constructions classiques dans le cadre de notre conjecture.

\section{Le diamant de Hilbert $\Div^d$}
\label{sec: diamant de Hilbert}

\subsection{Quelques définitions}

Soit $E$ un corps local de corps résiduel le corps fini $\Fq$ et $\pi$ une uniformisante de $E$, $\O_E/\pi=\Fq$. On a donc soit $[E:\Qp]<+\infty$, soit $E=\Fq\llparent \pi\rrparent$. On note $\Perf_{\Fq}$ la catégorie des $\Fq$-espaces perfectoïdes munie de la topologie pro-étale.
\\

Rappelons que pour $S\in \Perf_{\Fq}$ on note 
$$
X_S= Y_S/\ph^\Z
$$
comme $E$-espace adique pré-perfectoïde. 
Il s'agit de la \og famille de courbes paramétrées par $S$\fg{}.
Lorsque $S=\spa (R,R^+)$ est affinoïde perfectoïde
$$
Y_S = \spa (\A_{R,R^+},\A_{R,R^+})\setminus V ( \pi [\varpi])
$$
où $\varpi \in R^{00}\cap R^\times$ est une pseudo-uniformisante,
$$
\A_{R,R^+} = \begin{cases}
 W_{\O_E} (R^+) \text{ si } E|\Qp \\
 R^+ \llbracket \pi\rrbracket \text{ si } E |\Fq \text{ auquel cas on pose } [a]=a \text{ pour } a\in R^0
\end{cases}
$$
est l'anneau $A_{\rm inf}$ de Fontaine associé à $R$ lorsque $E=\Qp$  
et $R^+=R^0$. 
Ici le Frobenius $\ph$ est celui induit par le Frobenius des vecteurs de Witt ramifiés. Via la formule 
$$
Y_S^\diamond = S\times \spa (E)^\diamond 
$$
on a $\ph= \ph_S \times Id$ comme Frobenius partiel sur ce produit.
\\

Il y a une application continue surjective
$$
\tau: |X_S|\ldrt |S|
$$
définie via les égalités 
$$
|X_S|=|X_S^{\diamond}|= 
\big |(S\times \spa (E)^\diamond )/\ph_S^\Z\big | = \big | (S\times \spa (E)^\diamond )/\ph_{E^\diamond}^\Z \big | \ldrt |S|
$$
puisque $\ph_S\circ \ph_{E^\diamond}$ est le Frobenius absolu de $S\times \spa (E)^\diamond$ qui agit trivialement sur $|S\times \spa (E)^\diamond|$. Pour tout ouvert $U$ de $S$  $$X_U=\tau^{-1}(U)$$ et si $s\in S$
$$
|X_{K(s),K(s)^+}| = \bigcap_{U\ni s} |X_U|\hookrightarrow |X_S|
$$
 où l'on note $K(s)$ le corps résiduel complété de $S$ en $s$ et $U$ parcourt les voisinages ouverts de $s$. 
 On a ainsi 
$$
|X_S| = \bigcup_{s\in S} |X_{K(s),K(s)^+}|.
$$
Cela donne un support au fait que l'on pense à $X_S$ comme étant la famille 
$(X_{K(s),K(s)^+})_{s\in S}$. 
On utilisera le lemme suivant qui  signifie intuitivement que \og $X_S/S$ est propre\fg.

\begin{lemme}\label{lemme:tau est fermee}
L'application continue $\tau: |X_S|\ldrt |S|$ est fermée. 
\end{lemme}
\begin{proof}
Il s'agit d'une propriété locale sur $S$ que l'on peut supposer quasi-compact quasi-séparé.
Le morphisme de diamants $(S\times \spa (E)^\diamond)/\ph_{E^\diamond}^\Z \drt S$ est alors quasi-compact quasi-séparé. Il est de plus partiellement propre puisque $S\times \spa (E)^\diamond\drt S$ est partiellement propre car $\spa (E)^\diamond/\spa (\Fq)$ est partiellement propre. On en déduit que l'image d'un fermé par $\tau$ est un ensemble pro-constructible stable par spécialisations donc fermé.
\end{proof}

\begin{rema}
On peut également vérifier que $\tau$ est ouverte et que donc \og $X_S/S$ est propre et lisse\fg{}.
\end{rema}

On aura également besoin du lemme qui suit.

\begin{lemme}\label{lemme: injection fonctions fibre a fibre}
Pour $V$ un ouvert de $X_S$, resp. $Y_S$,
$$
\O(V)\hookrightarrow \prod_{x\in U} \O \big ( V\cap X_{K(\tau(x)),K(\tau (x))^+}\big ), \ \text{resp. } \O (V) \hookrightarrow \prod_{x\in V} \O \big ( V\cap Y_{K(\tau(x)),K(\tau (x))^+}\big ).
$$
\end{lemme}
\begin{proof}
Si $C=\Ebc$ alors $Y_S\hat{\otimes}_E C$ est perfectoïde. De plus  pour $s\in S$
$$
Y_{K(s),K(s)^+}\hat{\otimes}_E C = \underset{U\ni s}{\limp} Y_{U}\hat{\otimes}_E C 
$$
et donc le corps résiduel  complété de $x\in Y_S\otimes_E C$ coïncide avec celui de $x\in Y_{K(\tau(x)),K(\tau(x))^+}\hat{\otimes}_E C$. On conclut en utilisant que
$$
\O(V)\subset \O (V\hat{\otimes}_E C ) \subset \prod_{x\in V\hat{\otimes}_E C} K(x).
$$
\end{proof}

\begin{defi}
On note $\Pic$ le champ de Picard des fibrés sur la courbe, pour $S\in \Perf_{\Fq}$ 
$$
\Pic (S) =\text{groupoïde des fibrés en droites sur }X_S.
$$
\end{defi}

On a une décomposition en sous-champs ouverts/fermés
$$
\Pic =\coprod_{d\in \Z} \Pic^d.
$$
Il y a de plus des isomorphismes 
\begin{eqnarray*}
\Pic ^d &\iso & [\spa (\Fq)/ \underline{E}^\times ] \\
\L & \longmapsto & \text{Isom} ( \O(d),\L)
\end{eqnarray*}
où
\begin{itemize}
\item $[\spa (\Fq)/ \underline{E}^\times ]$ est le champ classifiant des $\underline{E}^\times$-torseur 
pro-étales
\item $\O(d)$ désigne le fibré en droites $\O_{X_S} (d)$ sur $X_S$ {\it associé au choix de $\pi$.}
\end{itemize}
En d'autres termes pour tout fibré en droites $\L$ sur $X_S$ fibre à fibre de degré $d$ sur $S$ il existe un recouvrement pro-étale $\widetilde{S}\drt S$ tel que $\L_{|X_{\widetilde{S}}} \simeq \O (d)$.
Il s'agit d'un cas particulier d'un résultat de Kedlaya-Liu (\cite{KedlayaLiuRelative1} théo. 8.5.12).

\begin{rema}
Le fait que l'espace de modules grossier de $\Pic^0$ soit trivial n'est rien d'autre qu'une traduction d'un des théorèmes principaux de \cite{Courbe} qui dit que si $X$ est la courbe algébrique associée à un corps perfectoïde algébriquement clos alors $B_{cris}^{\ph=Id} = \O (X\setminus\{\infty\})$ est un anneau principal. On a en effet $\text{Pic}^0 (X)= \mathcal{C}l ( B_{cris}^{\ph=Id})$.
\end{rema}

\begin{defi}\label{defi:Divd}
Pour un entier $d\geq 1$ on note $\Div^d$ le faisceau sur $\Perf_{\Fq}$ qui à $S$ associe les classes d'équivalences de couples $(\L,u)$ où 
\begin{itemize}
\item $\L$ est un fibré en droites sur $X_S$ fibre à fibre sur $S$ de degré $d$
\item $u\in H^0(X_S,\L)$ est non nul fibre à fibre sur $S$
\item $(\L,u)\sim (\L',u')$ si il existe un isomorphisme entre $\L$ et $\L'$ envoyant $u$ sur $u'$.
\end{itemize}
\end{defi}

Fibre à fibre sur $S$  signifie que pour tout $s\in S$, via $X_{K(s),K(s)^+}\drt X_S$, la condition est vérifiée par tiré en arrière sur $X_{K(s),K(s)^+}$.
 Un tel $u$ définit un morphisme
$\O_{X_S}\drt \L$ qui est en fait injectif. Cela est clair si $S=\spa (K,K^+)$ avec $(K,K^+)$ un corps affinoïde perfectoïde puisque dans ce cas là le lieu d'annulation d'une section non-nulle d'un fibré en droites est un sous-ensemble discret de $|X_{K,K^+}|$. Le résultat général se déduit alors du lemme \ref{lemme: injection fonctions fibre a fibre}. Puisque $\O_{X_S}\hookrightarrow \L$ les automorphismes de $(\L,u)$ sont triviaux et $\Div^d$ est bien un faisceau pro-étale comme annoncé dans la définition.
\\

Si on était dans un \og cadre classique\fg{} c'est à dire $X_S=X\times_k S$ avec $X$ une courbe propre et lisse sur le corps $k$ et $S$ un $k$-schéma la définition précédente coïnciderait avec celle d'un {\it diviseur de Cartier relatif effectif de degré $d$ sur $X_S/S$.}

\begin{defi}
Pour $d\geq 1$ on note 
\begin{eqnarray*}
\text{AJ}^d: \Div^d & \ldrt & \Pic^d \\
\ (\L,u) & \longmapsto  & \L 
\end{eqnarray*}
le morphisme d'Abel-Jacobi en degré $d$.
\end{defi}

On va étudier  ce morphisme en détails en décrivant de manières différentes $\Div^d$.

\subsection{Description de $\Div^d$ en termes d'espaces projectifs sur un Banach-Colmez absolu}

Soit $S\in \Perf_{\Fq}$. 
Par définition le site pro-étale de $X_S$, resp. $Y_S$, est formé des espaces perfectoïdes pro-étales au dessus de $X_S$, les recouvrements pro-étales étant définis de façon usuelle. Si $\F$ est un faisceau pro-étale sur $X_S$ alors ses sections globales, i.e. ses sections sur l'objet final du topos, sont données par 
 $$\F(X_S)= \F\big (X_S\hat{\otimes}_E \Ebc \, \big )^{\Gal (\overline{E} |E)}.$$ Il en est de même pour $Y_S$. Tout fibré vectoriel sur ces espaces définit un faisceau pro-étale  dont les sections globales sont les sections globales usuelles pour la topologie analytique.
\\

Le morphisme de topos associé à $\tau:|X_S|\drt |S|$ s'étend en un morphisme de topos
$$
\tau: (X_S)_{\text{pro-ét}}^{\widetilde{\ \ }} \ldrt \widetilde{S}_{\text{pro-ét}}.
$$
Cela résulte de ce que si $T/S$ est étale alors $X_T/X_S$ l'est également. Il en est de même pour $Y_S$.


\begin{defi}
On note $\BB$ le faisceau pro-étale en anneaux sur $\Perf_{\Fq}$ défini par 
$$
\BB (S)= \O(Y_S).
$$
On note $\ph$ son endomorphisme de Frobenius.
\end{defi}

En d'autres termes pour tout $S$, $\BB_S$ est l'image directe de $\O_{Y_S}$ via $(Y_S)_{\text{pro-ét}}^{\widetilde{\ \ }}\drt \widetilde{S}_{\text{pro-ét}}$.
\\

Pour $d\geq 0$ 
on appelle le faisceau
$$
\BB^{\ph=\pi^d}
$$
un {\it espace de Banach-Colmez  absolu} (on renvoie à la section \ref{sec: diamants absolus} qui suit pour plus de détails sur la notion \og d'objet absolu \fg{}).  Pour tout $S\in \Perf_{\Fq}$
$$
\BB^{\ph=\pi^d}_S:= \BB^{\ph=\pi^d} \times_{\spa(\Fq)}  S
$$
est un espace de Banach-Colmez au dessus de $S$ (\cite{Colmez2}),
$$
\BB^{\ph=\pi^d}_S = \tau_* \O_{X_S}(d).
$$ 
C'est un diamant localement spatial sur $S$. Il est de plus {\it partiellement propre au dessus de $\spa (\Fq)$} au sens où pour $S=\spa (R,R^+)$ affinoïde perfectoïde 
$$
\BB (R,R^+) =\BB (R,R^0)
$$
et donc $\BB (R,R^+)^{\ph=\pi^d} =\BB (R,R^0)^{\ph=\pi^d}$.
 Néanmoins 
{\it $\BB^{\ph=\pi^d}$ n'est pas un diamant}. Pour $d=0$ on a $$\BB^{\ph=Id}=\underline{E}.$$ 

\begin{defi}
On note 
$$
\BB^{\ph=\pi^d}\setminus \{0\}
$$
le faisceau pro-étale 
$$
S\longmapsto \big \{ x\in \BB(S)^{\ph=\pi^d}\ |\ \forall s\in S, s^*x\neq 0\in \BB (K(s))^{\ph=\pi^d}\big \}.
$$
\end{defi}

\begin{lemme}
Pour tout $S$ on a 
$$
\big (\BB^{\ph=\pi^d}\setminus \{0\} \big )_S = \BB^{\ph=\pi^d}_S \setminus \iota (S)
$$
où $\iota:S\drt \BB^{\ph=\pi^d}_S$ est la section nulle qui est fermée dans $\BB_S^{\ph=\pi^d}$.
\end{lemme}
\begin{proof}
D'après le lemme \ref{lemme: injection fonctions fibre a fibre} on a $\BB(S)\hookrightarrow \prod_{s\in S} \BB (K(s))$.
Maintenant, si $f\in \O(Y_S)^{\ph=\pi^d}$, $V(f)\subset |Y_S|$ est fermé invariant sous $\ph$ et définit un fermé $Z=V(f)/\ph^\Z\subset X_S$. D'après le lemme \ref{lemme:tau est fermee} $U=|S|\setminus \tau (Z)$ est ouvert et on conclut.
\end{proof}

Le fait que la section nulle soit fermée se traduit de la façon suivante.

\begin{coro}
Le morphisme $\BB^{\ph=\pi^d}\drt \spa (\Fq)$ est séparé.
\end{coro}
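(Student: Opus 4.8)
Le plan est de d\'eduire cet \'enonc\'e directement du lemme pr\'ec\'edent, qui affirme que la section nulle $\iota:S\drt \BB^{\ph=\pi^d}_S$ est ferm\'ee. On commencera par se ramener au cadre relatif : la s\'eparation de $\BB^{\ph=\pi^d}\drt \spa(\Fq)$ \'equivaut \`a celle de toutes ses fibres $\BB^{\ph=\pi^d}_S\drt S$ pour $S\in \Perf_{\Fq}$ affino\"\ide perfecto\"\ide, tout tel $S$ \'etant muni d'un unique morphisme vers $\spa(\Fq)$. La quasi-s\'eparation de $\BB^{\ph=\pi^d}_S$ sur $S$ est \'el\'ementaire (on peut par exemple d\'evisser cet espace de Banach-Colmez en extensions successives de copies de $\Ga$ et de $\underline{E}$ au dessus de $S$), de sorte que tout revient \`a montrer que la diagonale
$$
\Delta:\BB^{\ph=\pi^d}_S\ldrt \BB^{\ph=\pi^d}_S\times_S \BB^{\ph=\pi^d}_S
$$
est une immersion ferm\'ee.

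Pour ce dernier point on exploitera la structure de groupe. Comme $\ph$ est $E$-lin\'eaire et que $E\subset \BB(S)$, le faisceau $\BB^{\ph=\pi^d}$ est un faisceau en $\underline{E}$-espaces vectoriels ; en particulier la translation $(x,y)\longmapsto (x,y-x)$ est un automorphisme du produit $\BB^{\ph=\pi^d}_S\times_S \BB^{\ph=\pi^d}_S$ au dessus de $S$, d'inverse $(x,z)\mapsto (x,z+x)$, et il envoie la diagonale sur $\BB^{\ph=\pi^d}_S\times_S \iota(S)$. Puisque le lemme pr\'ec\'edent identifie $\iota$ \`a une immersion ferm\'ee $S\hookrightarrow \BB^{\ph=\pi^d}_S$, son tir\'e en arri\`ere par la seconde projection est une immersion ferm\'ee $\BB^{\ph=\pi^d}_S\times_S \iota(S)\hookrightarrow \BB^{\ph=\pi^d}_S\times_S \BB^{\ph=\pi^d}_S$ ; la diagonale, qui lui est isomorphe, est donc ferm\'ee, ce qui conclut.

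Il n'y a pas d'obstacle s\'erieux ici : tout le contenu g\'eom\'etrique a d\'ej\`a \'et\'e extrait dans le lemme pr\'ec\'edent, lui-m\^eme cons\'equence du fait que $\tau$ est ferm\'ee (lemme \ref{lemme:tau est fermee}). Le seul point demandant un minimum d'attention est de v\'erifier que la propri\'et\'e \og ferm\'ee \fg{} du lemme se rel\`eve bien en une immersion ferm\'ee de diamants, stable par produit fibr\'e ; alternativement, on peut raisonner directement sur les espaces topologiques sous-jacents en montrant que l'image de la diagonale dans $|\BB^{\ph=\pi^d}_S\times_S \BB^{\ph=\pi^d}_S|$ est pro-constructible et stable par sp\'ecialisations, exactement comme dans la preuve du lemme \ref{lemme:tau est fermee}.
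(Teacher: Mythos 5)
Votre preuve est correcte et suit essentiellement la même voie que l'article : celui-ci ne fait qu'affirmer que la séparation « se traduit » de la fermeture de la section nulle, c'est-à-dire exactement l'argument de translation par la structure de $\underline{E}$-espace vectoriel, $(x,y)\mapsto (x,y-x)$, ramenant la diagonale à $\BB^{\ph=\pi^d}_S\times_S \iota(S)$, que vous explicitez. Votre remarque finale sur le passage de « fermée » à « immersion fermée » (ou, alternativement, l'argument purement topologique sur $|\BB^{\ph=\pi^d}_S\times_S\BB^{\ph=\pi^d}_S|$) est précisément le point que l'article laisse implicite.
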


On note $$\BB^{\ph=\pi^d}\setminus \{0\}/\underline{E}^\times$$ comme faisceau  pro-étale quotient. 

\begin{prop}
Il y a un isomorphisme de faisceaux pro-étales 
$$
\BB^{\ph=\pi^d}\setminus \{0\}/\underline{E}^\times \iso \Div^d
$$
via lequel le morphisme d'Abel-Jacobi est donné par
$$
AJ^d: \BB^{\ph=\pi^d}\setminus \{0\}/\underline{E}^\times\ldrt  [\spa(\Fq)/\underline{E}^\times].
$$
\end{prop}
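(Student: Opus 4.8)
The plan is to write down the obvious morphism and check it is an isomorphism of pro-étale sheaves, essentially by the same bookkeeping as in the classical case. Recall that the line bundle $\O_{X_S}(d)$ attached to the choice of $\pi$ is by construction the descent to $X_S=Y_S/\ph^\Z$ of the trivial bundle $\O_{Y_S}$ along the Frobenius descent datum twisted by $\pi^d$; hence pulling global sections back to $Y_S$ identifies
$$
H^0(X_S,\O_{X_S}(d)) = \big\{ f\in \O(Y_S)\ :\ \ph(f)=\pi^d f\big\} = \BB(S)^{\ph=\pi^d}.
$$
Under this identification a section $u\in \BB^{\ph=\pi^d}(S)$ is nonzero fibre à fibre on $S$ exactly when the pair $(\O_{X_S}(d),u)$ satisfies the conditions of Définition \ref{defi:Divd}. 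This yields a morphism $\BB^{\ph=\pi^d}\setminus\{0\}\ldrt \Div^d$, $u\mapsto (\O(d),u)$. It is invariant under the pro-étale action of $\underline{E}^\times$ by homotheties, since multiplication by $\lambda$ on $u$ is realised by the automorphism $\lambda\cdot:\O(d)\iso\O(d)$ of $X_S$, so $(\O(d),\lambda u)\sim(\O(d),u)$; it therefore factors through the pro-étale quotient, giving the candidate map $\BB^{\ph=\pi^d}\setminus\{0\}/\underline{E}^\times\ldrt \Div^d$.

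For surjectivity as a map of pro-étale sheaves I would use the isomorphism $\Pic^d\iso[\spa(\Fq)/\underline{E}^\times]$ (the cited consequence of Kedlaya--Liu): given $(\L,u)\in\Div^d(S)$ it furnishes a pro-étale cover $\widetilde S\drt S$ and an isomorphism $\L_{|X_{\widetilde S}}\simeq\O(d)$, and transporting $u$ along it produces $\tilde u\in\BB^{\ph=\pi^d}(\widetilde S)$, still nonzero fibre à fibre, so that $(\L,u)_{|\widetilde S}$ lies in the image. For injectivity, since the target is a sheaf and the source is a sheafification it suffices to check that the presheaf quotient $\BB^{\ph=\pi^d}\setminus\{0\}(S)/\underline{E}^\times(S)\ldrt\Div^d(S)$ is injective for every $S$: if $(\O(d),u)\sim(\O(d),u')$ an equivalence is an automorphism $\phi\in\Aut_{X_S}(\O(d))=H^0(X_S,\O_{X_S}^\times)$ carrying $u$ to $u'$, and the ``properness'' of the curve — concretely $\BB^{\ph=\Id}=\underline{E}$, whence $H^0(X_S,\O_{X_S}^\times)=\underline{E}^\times(S)$ — forces $\phi=\lambda\in\underline{E}^\times(S)$ with $u'=\lambda u$. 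Combining the two, the map is an isomorphism of pro-étale sheaves.

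It then remains to read off the Abel--Jacobi morphism. Under the isomorphism just constructed a point $[u]$ maps to $(\O(d),u)\in\Div^d(S)$, hence under $\text{AJ}^d$ to the class of $\O(d)\in\Pic^d(S)$; and via $\Pic^d\iso[\spa(\Fq)/\underline{E}^\times]$, $\L\mapsto\text{Isom}(\O(d),\L)$, the bundle $\O(d)$ corresponds to the trivial $\underline{E}^\times$-torsor, i.e. the base point of $[\spa(\Fq)/\underline{E}^\times]$. Therefore $\text{AJ}^d$ is identified with the morphism induced by the structural map $\BB^{\ph=\pi^d}\setminus\{0\}\drt\spa(\Fq)$ together with the equivariance for the two $\underline{E}^\times$-actions, that is the asserted projection $\BB^{\ph=\pi^d}\setminus\{0\}/\underline{E}^\times\ldrt[\spa(\Fq)/\underline{E}^\times]$.

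The only genuinely non-formal ingredients are thus the identification $H^0(X_S,\O_{X_S}(d))=\BB(S)^{\ph=\pi^d}$ together with the computation $H^0(X_S,\O_{X_S})=\underline{E}(S)$ (the completeness of the relative curve, which feeds the injectivity step) and the pro-étale local triviality of line bundles of fixed degree; I expect the main care is needed precisely in matching the various fibre-à-fibre nonvanishing conditions across these identifications and in the passage from the presheaf quotient to its sheafification, both of which are routine here.
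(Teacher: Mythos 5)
Your proof is correct and follows essentially the same route as the paper: the map $u\mapsto(\O(d),u)$ and the fact that, by the Kedlaya--Liu result, every line bundle of degree $d$ fibre \`a fibre is pro-\'etale locally isomorphic to $\O_{X_S}(d)$. The extra details you supply (injectivity via $\Aut(\O(d))=H^0(X_S,\O_{X_S}^\times)=\underline{E}^\times(S)$ and the passage through the presheaf quotient) are precisely what the paper leaves implicit in the isomorphism $\Pic^d\simeq[\spa(\Fq)/\underline{E}^\times]$ and in the remark that pairs $(\L,u)$ have no nontrivial automorphisms.
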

\begin{proof}
Il y a un morphisme naturel $\underline{E}^{\times}$-invariant 
$$
\BB^{\ph=\pi^d}\setminus \{0\}\ldrt \Div^d
$$
qui envoie une section de $\O_{X_S}(d)$ non nulle fibre à fibre sur $S$ sur le diviseur associé donné par $\O_{X_S}\drt \O_{X_S}(d)$. Cela définit le morphisme 
$$
\BB^{\ph=\pi^d}\setminus \{0\}/\underline{E}^\times\ldrt \Div^d.
$$
Ce morphisme est un isomorphisme puisque pour tout $S\in \Perf_{\Fq}$, pro-étale localement sur $S$ tout fibré en droites de degré $d$ fibre à fibre sur $S$ est isomorphisme à $\O_{X_S} (d)$.
\end{proof}

On en déduit en particulier que pour tout $S\in \Perf_{\Fq}$, $\Div^d_S$ est un diamant. On verra plus tard qu'en fait $\Div^d$ est un diamant (prop. \ref{prop: Div d comme quotient par le groupe symetrique}). Mettons tout de même en exergue le corollaire suivant.

\begin{coro}
Le morphisme d'Abel-Jacobi $\text{AJ}^d$ est une fibration pro-étale localement triviale de fibre l'espace de Banach-Colmez absolu épointé $\BB^{\ph=\pi^d}\setminus \{0\}$.
\end{coro}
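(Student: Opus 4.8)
The plan is to obtain the corollary by unwinding the proposition that immediately precedes it. That proposition gives an isomorphism of pro-\'etale sheaves $\BB^{\ph=\pi^d}\setminus\{0\}/\underline{E}^\times\iso\Div^d$ under which the Abel--Jacobi morphism becomes the tautological map $\Div^d\to[\spa(\Fq)/\underline{E}^\times]$. First I would record that this quotient presentation says precisely that the projection $p\colon\BB^{\ph=\pi^d}\setminus\{0\}\to\Div^d$ is a pro-\'etale $\underline{E}^\times$-torsor --- so in particular pro-\'etale locally on $\Div^d$ it admits a section --- and that $\text{AJ}^d$ is, by construction, the morphism classifying it. Since the automorphisms of a pair $(\L,u)$ are trivial (as noted after Definition \ref{defi:Divd}), the $\underline{E}^\times$-action on $\BB^{\ph=\pi^d}\setminus\{0\}$ is free and $p$ is a genuine principal bundle.

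Next I would trivialize $\text{AJ}^d$ over the tautological pro-\'etale cover $a\colon\spa(\Fq)\to[\spa(\Fq)/\underline{E}^\times]$ (which is itself a $\underline{E}^\times$-torsor). Forming the fibre product, $\Div^d\times_{[\spa(\Fq)/\underline{E}^\times],\,a}\spa(\Fq)$ is the functor classifying points of $\Div^d$ equipped with a trivialization of the associated torsor, hence is canonically the total space $\BB^{\ph=\pi^d}\setminus\{0\}$, the second projection being nothing but its structure morphism to $\spa(\Fq)$. Thus, pro-\'etale locally on the target, $\text{AJ}^d$ is the constant family with fibre $\BB^{\ph=\pi^d}\setminus\{0\}$, which is the assertion. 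The one point that is not purely formal is that this fibre is a bona fide (locally spatial) diamond and not merely a pro-\'etale sheaf, so that one really has a fibration in diamonds; this is the statement that the pointed absolute Banach--Colmez space is a diamond (Proposition \ref{prop: BC epointe est un diamant}), which itself rests on the identification $\Div^d=\BB^{\ph=\pi^d}\setminus\{0\}/\underline{E}^\times$ above together with Kedlaya--Liu's pro-\'etale local triviality of degree-$d$ line bundles on the curve.

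The corollary itself therefore presents no genuine obstacle: it is the formal repackaging, as a locally trivial fibration, of the geometry already in place. Its point is to exhibit the fibre $\BB^{\ph=\pi^d}\setminus\{0\}$, whose simple connectedness for $d>1$ (resp. $d>2$ when $E|\Qp$) is the main theorem of the paper; there the argument I would anticipate is a purity statement for the open immersion $\BB^{\ph=\pi^d}\setminus\{0\}\hookrightarrow\BB^{\ph=\pi^d}$ --- every finite \'etale covering of the punctured space should extend across the codimension-$d$ origin --- reduced to Zariski--Nagata purity in ordinary rigid geometry, with the mixed-characteristic case being harder because of an extra \og de Rham\fg{} direction, of $B^+_{\mathrm{dR}}$-type, which is what degrades the bound from $d>1$ to $d>2$. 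The real obstacle is thus not here but in establishing that purity, i.e. in matching the finite \'etale site of the absolute diamond with that of a classical rigid (resp. $B^+_{\mathrm{dR}}$-analytic) model over which Zariski--Nagata applies.
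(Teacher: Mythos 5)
Your argument is correct and is essentially the paper's own: the corollary is stated there without proof, as the immediate unwinding of the preceding proposition, by pulling back $\text{AJ}^d\colon \BB^{\ph=\pi^d}\setminus\{0\}/\underline{E}^\times\to[\spa(\Fq)/\underline{E}^\times]$ along the tautological cover $\spa(\Fq)\to[\spa(\Fq)/\underline{E}^\times]$, exactly as you do. One small caution: the corollary does not assert that the fibre is a diamond, and your appeal to Proposition \ref{prop: BC epointe est un diamant} at this point is a forward reference --- in the paper that fact is only established later, via the symmetric-power description of $\Div^d$ (Proposition \ref{prop: Div d comme quotient par le groupe symetrique}), so it should not be used here.
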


Notons le fait suivant.

\begin{lemme}
Le morphisme $\Div^d\drt \spa (\Fq)$ est séparé.
\end{lemme}
\begin{proof}
Soit $S\in \Perf_{\Fq}$ quasicompact quasi-séparé et $D_1,D_2\in \Div^d(S)$. Soit 
$$
T\ldrt S
$$
le $\underline{E}^\times\times \underline{E}^\times$-torseur des trivialisations de $(\O(D_1),\O(D_2))$. D'après le lemme 8.11 de \cite{ScholzeCohomologyDiamonds}
$$
T= T_0\underset{\underline{\O_E^\times}\times \underline{\O_E^\times}}{\times} {\underline{E}^\times\times \underline{E}^\times}
$$
où $T_0= T / \pi^\Z\times \pi^\Z$ est un $\underline{\O_E^\times}\times \underline{\O_E^\times}$-torseur pro-étale-fini. Le couples $(D_1,D_2)$ est alors donné par deux morphismes $\underline{E}^\times$-équivariants au dessus de $S$
$$
\xymatrix{
T\ar@<.8ex>[r]^-u  \ar@<-.8ex>[r]_-v & \big ( \BB^{\ph=\pi^d}\setminus \{0\} \big )_S
}
$$  
où $u$ est $\underline{E}^\times$-équivariant relativement à $E^\times \times \{1\}\subset E^\times \times E^\times$ et $v$ relativement à $\{1\}\times E^\times \subset E^\times \times E^\times$. Notons $u_0=u_{|T_0}$ et $v_0=v_{|T_0}$ 
$$
\text{Im} |u_0|, \text{Im} |v_0| \subset \big | ( \BB^{\ph=\pi^d}\setminus \{0\} \big )_S \big |
$$
sont des sous-ensembles quasi-compacts  dans l'espace localement spectral  quasi-séparé $ \big | ( \BB^{\ph=\pi^d}\setminus \{0\} \big )_S \big |$. On vérifie que $\pi^\Z$ agit de façon proprement discontinue sur $|(\BB^{\ph=\pi^d}\setminus \{0\})_S|$ et que donc il existe $n\geq 1$ tel que $\forall k\in \Z$,
$$
|k|\geq n\impl \text{Im} |u_0|\cap \pi^k \text{Im} |v_0|=\emptyset.
$$
Soit alors 
$$
Z=\bigcup_{|k| <n} |\text{Eq} (u_0,\pi^k v_0)| \subset |T_0|
$$
qui est fermé puisque $\BB^{\ph=\pi^d}_S$ est séparé sur $S$. Puisque $T_0\drt S$ est pro-étale-fini l'image de $Z$ dans $|S|$ est fermée. On conclut.
\end{proof}

\begin{rema} Comme me l'a fait remarquer Peter Scholze
 il faut prendre garde au fait que {\it le diamant $\Div^d$ n'est pas spatial} (\cite{ScholzeCohomologyDiamonds} déf. 9.17) car il n'est pas quasi-séparé, bien que le morphisme $\Div^d\drt \spa (\Fq)$ le soit. Typiquement la proposition 18.10 de \cite{ScholzeCohomologyDiamonds} ne s'applique pas à $\Div^d$ bien que $|\Div^d|$ soit formé d'un seul point.
 \\
 Par définition un objet $X$ d'un topos est quasi-séparé si pour tout $A,B$ quasi-compacts munis de morphismes $A\drt X$ et $B\drt X$, $A\times_X B$ est quasi-compact. Si $e$ est l'objet final du topos alors celui-ci n'est pas forcément quasi-séparé comme c'est le cas de $\spa (\Fq)$ qui est l'objet final du topos pro-étale. Dès lors il est tout à fait possible que $X\drt e$ soit quasi-séparé sans que $X$ ne le soit. 
\end{rema}

\subsection{Description de $\Div^1$ en termes de débasculements}
\label{sec:description de Div1 en termes de debasculements}

On a vu que $\Div^1=\BB^{\ph=\pi}\setminus \{0\}/\underline{E}^\times$. On va maintenant donner une description différente de $\Div^1$, le jeu étant dans la suite de passer d'une description à l'autre.
\\

Rappelons que 
$
\spa (E)^\diamond
$
désigne le faisceau des débasculements. Plus précisément, pour tout $S\in \Perf_{\Fq}$
$$
\spa (E)^\diamond (S) = \{ (S^\sharp,\iota) \}/\sim
$$
où $S^\sharp$ est un $E$-espace perfectoïde et $\iota: S\iso S^{\sharp,\flat}$. Lorsque $S=\spa (R,R^+)$ est affinoïde perfectoïde 
$$
\spa (E)^{\diamond} (R,R^+) = \{\xi\in \A_{R,R^0}\text{ primitif de degré }1\}/\A_{R,R^0}^\times.
$$
Rappelons ici que $\xi=\sum_{n\geq 0} [x_n]\pi^n$ est de degré $1$ si $x_0\in R^\times \cap R^{00}$ i.e. est une pseudo-uniformisante et $x_1\in (R^0)^\times$. 
Cela permet de voir un débasculement $S^\sharp$ de $S$ comme un diviseur 
$$
S^\sharp =V(\xi) \hookrightarrow Y_S
$$ 
qui définit lui-même une immersion 
$$
S^\sharp \hookrightarrow X_S
$$
par composition avec $Y_S \twoheadrightarrow X_S$ (il suffit de le vérifier localement sur $S$). 
Un tel élément primitif $\xi\in\A_{R,R^0}$ est sans torsion sur $\O_{Y_S}$. C'est en effet clair si $S=\spa (K,K^+)$ avec $K$ un corps perfectoïde. Le cas général résulte alors du lemme \ref{lemme: injection fonctions fibre a fibre}.
Le plongement 
$$
S^\sharp \hookrightarrow X_S
$$
définit donc un élément de $\Div^1(S)$.
\\

Cette construction définit un morphisme $\ph$-invariant de faisceau pro-étales 
\begin{equation}\label{eq:morphisme vers Div1}
\spa (E)^\diamond \ldrt \Div^1
\end{equation}
Le faisceau quotient sur $\Perf_{\Fq}$
$$
\spa (E)^\diamond /\ph^\Z.
$$
est un diamant  d'espace topologique sous-jacent un point. Pour tout $S\in \Perf_{\Fq}$
$$
(\spa (E)^\diamond /\ph^\Z)_S = (S\times \spa (E)^\diamond )/\ph_{E^\diamond}^\Z
$$
et le Frobenius partiel $\ph_{E^\diamond}^\Z$ agit de façon proprement discontinue sans points fixes sur $|S\times \spa (E)^\diamond |$. On en déduit que $\spa (E)^\diamond\drt \spa (E)^\diamond/\ph^\Z$ est une présentation de $\spa (E)^\diamond/ \ph^\Z$.
\\

Le morphisme (\ref{eq:morphisme vers Div1}) induit alors un morphisme de diamants
$$
\spa (E)^\diamond /\ph_{E^\diamond}^\Z \ldrt \Div^1
$$
dont on affirme que c'est un isomorphisme. On peut effectivement décrire explicitement ce morphisme. Soit $\mathscr{LT}$ un $\O_E$-module formel $\pi$-divisible de $\O$-hauteur $1$ sur $\O_E$. Lorsque $E|\Qp$ il s'agit d'un groupe de Lubin-Tate. Lorsque $E=\Fq\llparent \pi\rrparent$ on peut prendre $\LT= \widehat{\mathbb{G}}_a$ où l'action de $\pi\in O_E$ est donnée par $x\mapsto x^q +\pi x$, $x\in \widehat{\mathbb{G}}_a$. Soit 
$$
\widetilde{\LT}_{\Fq} =\underset{\times \pi}{\limp} \LT_{\Fq}
$$
le revêtement universel de $\LT_{\Fq}$ comme $E$-espace vectoriel formel (\cite{Courbe} chap. 4) $$\widetilde{\LT}_{\Fq}\simeq \spf ( \Fq \llbracket T^{1/p^{\infty}}\rrbracket ).$$
 Il y a alors un isomorphisme de périodes 
$$
\widetilde{\LT}_{\Fq}  \iso \BB^{\ph=\pi}.
$$
Fixons une clôture algébrique $\overline{E}$ de $E$ et soit $E_\infty |E$ le complété de l'extension
de $LT$ engendrée par les points de torsion de $\G$ dans $\overline{E}$. L'action de $\Gal (\overline{E}|E)$ sur $T_\pi (\G)$ est donnée par un caractère de Lubin-Tate 
$$
\chi_{\LT}: \Gal (E_\infty | E)\iso \O_E^{\times}.
$$
On a alors une identification compatible à l'action de $E^\times$ (\cite{ConfLaumon})
$$
\spf (\O_{E_\infty}^\flat) = \BB^{\ph=\pi}
$$
où $\pi$ agit sur $\O_{E_\infty}^\flat$ comme le Frobenius. Cela induit une identification
$$
\spa (E_\infty^\flat) \iso \BB^{\ph=\pi}\setminus \{0\}
$$
compatible à l'action de $E^\times$ et donc 
$$
\spa (E)^\diamond /\ph^\Z = \spa (E_\infty^\flat)/ \underline{E}^\times \iso \BB^{\ph=\pi}\setminus \{0\}/\underline{E}^\times.
$$
Résumons cela dans la proposition suivante.

\begin{prop}\label{prop: identidification entre Div1 et Spa divise}
Il y a un isomorphisme de diamants
\begin{equation}
\label{eq:iso vers Div1}
\spa (E)^\diamond/\ph^\Z \iso \Div^1 = \BB^{\ph=\pi}\setminus \{0\}/\underline{E}^\times.
\end{equation}
Le tiré en arrière à $\spa (E)^\diamond/\ph^\Z$ du $\underline{E}^\times$-torseur pro-étale
\begin{equation}
\label{eq:tire en arriere torseur}
\BB^{\ph=\pi}\setminus \{0\} \ldrt \BB^{\ph=\pi}\setminus \{0\} /\underline{E}^\times
\end{equation}
se décrit de la façon suivante. Considérons le $\underline{E}^\times$-torseur de Lubin-Tate 
$$
\mathbb{T}_{\LT} = \spa (E_\infty^\flat ) \underset{\underline{\O_E^\times}}{\times} {\underline{E}^\times} \ldrt \spa (E)^\diamond 
$$
obtenu par inflation de $\O_{E}^\times$ à $E^\times$. Il est muni de sa structure de Frobenius canonique
$$
can: \ph_{E^\diamond}^* \mathbb{T}_{\LT}\iso \mathbb{T}_{\LT}.
$$
Alors le $\underline{E}^\times$-torseur $\ph$-équivariant sur $\spa (E)^\diamond$ déduit de 
(\ref{eq:iso vers Div1}) et (\ref{eq:tire en arriere torseur}) est donné par 
$$
(\mathbb{T}_{\LT}, \pi \times can).
$$
\end{prop}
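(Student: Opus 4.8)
The plan is to upgrade the $\ph$-invariant map of pro-\'etale sheaves $\spa(E)^\diamond \to \Div^1$ of (\ref{eq:morphisme vers Div1}) to an isomorphism of diamonds, and then to identify the pulled-back torsor by hand via Lubin-Tate theory. Recall that (\ref{eq:morphisme vers Div1}) sends an untilt $S^\sharp$ of $S$, i.e. the class of a degree-one primitive element $\xi \in \A_{R,R^0}$, to the degree-one effective Cartier divisor on $X_S$ obtained by pushing the closed immersion $V(\xi) \hookrightarrow Y_S$ forward along $Y_S \twoheadrightarrow X_S$; that $\xi$ is a non-zero-divisor on $\O_{Y_S}$ reduces by Lemma \ref{lemme: injection fonctions fibre a fibre} to the case of a perfectoid field. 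The divisors $V(\xi)$ and $V(\ph(\xi))$ are $\ph$-translates of one another in $Y_S$, so they have the same image in $X_S = Y_S/\ph^\Z$; hence the map is $\ph$-invariant and factors through $\spa(E)^\diamond/\ph^\Z$. Since $\ph_{E^\diamond}$ acts freely and properly discontinuously on $|S\times\spa(E)^\diamond|$, the latter is a diamond presented by $\spa(E)^\diamond$, and one gets a morphism of diamonds $\spa(E)^\diamond/\ph^\Z \to \Div^1 = \BB^{\ph=\pi}\setminus\{0\}/\underline{E}^\times$.

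\textbf{Bijectivity via an explicit model.} To see this is an isomorphism I would exhibit both sides explicitly through Lubin-Tate theory rather than argue abstractly. Fix a $\pi$-divisible formal $\O_E$-module $\LT$ of $\O$-height $1$ over $\O_E$; by \cite{Courbe} ch.~4 its universal cover $\widetilde{\LT}_{\Fq}\simeq\spf(\Fq\llbracket T^{1/p^\infty}\rrbracket)$ carries a period isomorphism $\widetilde{\LT}_{\Fq}\iso\BB^{\ph=\pi}$. Fixing $\overline{E}$, passing to the torsion points of $\LT$ and completing yields the perfectoid field $E_\infty$, with Lubin-Tate character $\chi_{\LT}:\Gal(E_\infty|E)\iso\O_E^\times$; the key input (\cite{ConfLaumon}) is the $E^\times$-equivariant identification $\spf(\O_{E_\infty}^\flat)=\BB^{\ph=\pi}$ under which $\pi\in E^\times$ acts on the left as the Frobenius of the characteristic-$p$ ring $\O_{E_\infty}^\flat$. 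Deleting the origin gives an $E^\times$-equivariant isomorphism $\spa(E_\infty^\flat)\iso\BB^{\ph=\pi}\setminus\{0\}$. Since $E^\times=\O_E^\times\times\pi^\Z$, quotienting by $\underline{\O_E^\times}$ recovers the untilt sheaf $\spa(E)^\diamond$ (the infinite-level Lubin-Tate tower), the residual action of $\pi$ being $\ph_{E^\diamond}$; quotienting further by $\pi^\Z$ therefore gives $\spa(E)^\diamond/\ph^\Z\iso\BB^{\ph=\pi}\setminus\{0\}/\underline{E}^\times$, and unwinding the definitions this coincides with the morphism above (both send an untilt $V(\xi)$ to its associated divisor).

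\textbf{The torsor and the main difficulty.} For the last assertion, transport the $\underline{E}^\times$-torsor $\BB^{\ph=\pi}\setminus\{0\}\to\Div^1$ through these identifications: over $\spa(E)^\diamond$ it becomes $\spa(E_\infty^\flat)\times_{\underline{\O_E^\times}}\underline{E}^\times=\mathbb{T}_{\LT}$. Its tautological Frobenius descent datum, coming from the Lubin-Tate tower, is the canonical $can$; but the $\ph$-equivariant structure carried over from $\BB^{\ph=\pi}$ is the one in which Frobenius acts by multiplication by $\pi$ (this is the content of the exponent $\ph=\pi$), so after transport the descent datum becomes $\pi\times can$, whence $(\mathbb{T}_{\LT},\pi\times can)$. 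I expect the main obstacle to be exactly this bookkeeping of Frobenius structures: one must verify that the period isomorphism $\widetilde{\LT}_{\Fq}\iso\BB^{\ph=\pi}$ and the identification $\spf(\O_{E_\infty}^\flat)=\BB^{\ph=\pi}$ are simultaneously $E^\times$-equivariant \emph{and} match the Frobenius of $\O_{E_\infty}^\flat$ with $\ph$, so that the $\ph^\Z$ used to form $\spa(E)^\diamond/\ph^\Z$ corresponds exactly to $\pi^\Z\subset E^\times$ and produces the twist by $\pi$ rather than by some other unit multiple; the remaining descent along pro-\'etale presentations is then formal.
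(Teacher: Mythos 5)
Your proposal is correct and follows essentially the same route as the paper: the paper's proof is precisely the explicit Lubin--Tate discussion preceding the proposition (period isomorphism $\widetilde{\LT}_{\Fq}\iso\BB^{\ph=\pi}$, the $E^\times$-equivariant identification $\spf(\O_{E_\infty}^\flat)=\BB^{\ph=\pi}$ from \cite{ConfLaumon} with $\pi$ acting as Frobenius, then quotienting by $\underline{\O_E^\times}$ and $\pi^\Z=\ph^\Z$), and the twist $\pi\times can$ on $\mathbb{T}_{\LT}$ arises exactly from the bookkeeping of Frobenius structures you single out as the main point.
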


\begin{rema}
On a donc pour $S\in \Perf_{\Fq}$ 
$$
X_S^\diamond = (S\times \spa (E)^\diamond )/\ph_S^\Z\ldrt \spa (E)^\diamond
$$
tandis que 
$$
\Div^1_S = (S\times \spa (E)^\diamond) /\ph_{E^\diamond}^\Z\ldrt S.
$$
Puisque $\ph_{E^\diamond}\circ \ph_S$ est le Frobenius absolu de $S\times \spa (E)^\diamond$ il agit trivialement sur son espace topologique, resp. son site étale. On a donc 
\begin{eqnarray*}
|X_S| &=& | \Div^1_S | \\
(X_S)_{\et} &=& (\Div^1_S)_{\et}
\end{eqnarray*}
mais les diamants $X_S^\diamond$ et $\Div^1_S$ ne sont pas isomorphes. 
\end{rema}

En termes d'éléments primitifs de degré $1$ ce torseur s'interprète comme {\it un torseur des renormalisations de produits infinis non-convergents.}  Plus précisément, soit $S=\spa (R,R^+)$ affinoïde perfectoïde et $\xi\in \A_{R,R^0}$ primitif de degré $d\geq 1$.
Soit 
$$
S^\sharp \hookrightarrow X_S
$$
associé à $\xi$ défini par l'idéal localement libre de rang $1$, $\mathcal{I}\subset \O_{X_S}$.
 Le diviseur associé $$(\O_{X_S}\hookrightarrow \mathcal{I}^{-1})\ \in \Div^1(S)$$ 
se calcule concrètement de la façon suivante. Quitte à multiplier $\xi$ par une unité de $\A_{R,R^0}^\times$ on peut supposer que via l'application de réduction $$W_{\O_E} ( R^0)\drt W_{\O_E} ( R^0/R^{00}), \text{ resp.} R^0 \llbracket \pi\rrbracket \drt R^0/R^{00}\llbracket \pi\rrbracket\text{ si } E=\Fq\llparent \pi\rrparent,$$ on a  $\xi\equiv \pi.$
\\

Notons $\L= \O_{Y_S}$  muni de la structure de Frobenius
$$
u:\ph^*\L\hookrightarrow \L
$$
donnée par la multiplication par $\xi$. L'objet $(\L,u)$ est un Shtuka sur $Y_S$ (ce que l'on appelle un $\ph$-module dans \cite{ConfLaumon} déf. 4.28). On peut alors lui associer le système inductif/projectif comme dans la proposition 4.29 de \cite{ConfLaumon}
$$
\dots \ldrt \ph^{n+1*}\L\xrig{\ \ph^{n*} u\ } \ph^{n*}\L \xrig{\ \ph^{(n-1)* u\ }} \ph^{(n-1)*} \L \ldrt \dots  \ \ \ n\in \Z.
$$
En restriction à tout ouvert quasi-compact de $Y_S$ se système inductif/projectif est essentiellement constant et donc 
$$
\L_\infty = \underset{n\geq 0}{\limp} \ph^{n*}\L \hookrightarrow \underset{n\leq 0}{\limi} \ph^{n*}\L=\L^\infty.
$$
est un monomorphisme de fibrés en droites $\ph$-équivariants que l'on identifie à un monomorphisme de
fibrés sur $X_S$. On a alors 
$$
(\L_\infty\otimes \L^{\infty,\vee} \hookrightarrow \O_{X_S} )\simeq (\mathcal{I}\hookrightarrow \O_{X_S} ).
$$

Grâce à l'hypothèse faite sur $\xi$ le produit infini
$$
\Pi^+ (\xi) = \prod_{n\geq 0} \frac{\ph^n(\xi)}{\pi} \in \O(Y_S)
$$
est convergent dans l'algèbre de Fréchet $\O(Y_S)$. Il satisfait l'équation fonctionnelle
$$
\frac{\xi}{\pi}.
\ph (\Pi^+(\xi)) =\Pi^+(\xi)
$$
et  donc 
$$
\Pi^+ (\xi) \in H^0 ( X_S, \L_\infty (d)).
$$
Cela définit un isomorphisme 
$$
\O_{X_S} (-d)\iso \L_\infty.
$$
On a donc une identification de torseurs pro-étales sur $S$
$$
\text{Isom} ( \O(-1),\mathcal{I}) = \text{Isom} (\O, \L^{\infty}).
$$
Afin de trouver une section globale de $\L^\infty$ trivialisant celui-ci on aimerait former
$$\og
\Pi^- (\xi) = \prod_{n<0} \ph^n (\xi) \fg
$$
mais malheureusement ce produit infini n'est pas convergent. Analysons la structure de celui-ci par approximations successives. Si $\xi \equiv a\text{ mod }\pi$ avec $a\in R^{00}\cap R^\times$
alors 
\begin{equation} \label{eq:renormalisation Kummer}
\og 
\prod_{n<0} \ph^n (\xi) \equiv a^{\frac{1}{q}+\frac{1}{q^2}+\dots} \equiv a^{\frac{1}{q-1}} \text{ mod }\pi \fg
\end{equation}
qui est une solution formelle d'une equation de Kummer que l'on peut résoudre après un revêtement étale fini de $S$. Si $x\in 1+\pi^k \A_{R,R^0}$, $k\geq 1$, avec $x\equiv 1+ [b] \pi^k \text{ mod } 1+ \pi^{k+1}\A_{R,R^0}$ alors
\begin{equation}\label{eq:renormalisation Artin Schreier}
\og 
\prod_{n<0} \ph^n (x) \equiv 1+ \big [ b^{\frac{1}{q}} + b^{\frac{1}{q^2}} + \dots ] \pi^k \text{ mod } 1+  \pi^{k+1} \A_{R,R^0}\fg
\end{equation}
où $\og b^{\frac{1}{q}} + b^{\frac{1}{q^2}}+\dots \fg$ est un solution formelle d'une équation d'Artin-Schreier que l'on peut résoudre après un revêtement étale fini de $S$.
\\

En d'autres termes si $\mathbb{T}\ldrt \spa (E)^\diamond$ est notre $\underline{E}^\times$-torseur
déduit de (\ref{eq:iso vers Div1}) et (\ref{eq:tire en arriere torseur}):
\begin{itemize}
\item le $\Fq^\times$-torseur $$\mathbb{T}/\pi^\Z.(1+\pi\O_E)\drt \spa (E)^\diamond$$ est \og de type Kummer obtenu par renormalisation de produits du type (\ref{eq:renormalisation Kummer})\fg{}
\item pour $k\geq 1$ le $\Fq$-torseur  
$$
\mathbb{T}/\pi^\Z.(1+\pi^{k+1}\O_E) \ldrt \mathbb{T}/\pi^\Z.(1+\pi^k \O_E)
$$
est \og de type Artin-Schreier obtenu par renormalisation de produits du type (\ref{eq:renormalisation Artin Schreier}).\fg{}
\end{itemize}

\subsection{Description de $\Div^d$ comme puissance symétrique de $\Div^1$}

On va maintenant montrer que $\Div^d$ est la $d$-ème puissance symétrique de $\Div^1$. Cette propriété est fondamentale dans la suite de ce texte puisqu'elle est à la base de la construction de la section \ref{sec:construction sym}, le principe étant alors de jouer entre les deux descriptions de $\Div^d$, $\Div^d=\BB^{\ph=\pi^d}\setminus \{0\}/\underline{E}^\times$ et $\Div^d=d$-ième puissance symétrique de $\Div^1$.
\\

On dispose d'une structure naturelle de monoïde commutatif sur 
$$
\coprod_{d>0} \Div^d.
$$

Le résultat clef suivant est une traduction \og en famille\fg{} de notre résultat clef de factorisation 
obtenu avec Fontaine (\cite{Courbe} chap. 2 sec. 2.4).

\begin{prop}\label{prop: Div d comme quotient par le groupe symetrique}
Pour $d>0$ l'application somme de $d$ diviseurs de degré $1$, $\Sigma^d: (\Div^1)^d\drt \Div^d$ est un morphisme surjectif quasi-pro-étale de diamants qui induit un isomorphisme 
$$
(\Div^1)^d /\mathfrak{S}_d \iso \Div^d
$$
où le quotient est un quotient de faisceaux pro-étales.
\end{prop}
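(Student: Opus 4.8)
The strategy is to deduce everything from the Fargues--Fontaine factorization theorem for primitive elements (\cite{Courbe}, chap. 2, sec. 2.4), interpreted relatively over an arbitrary affinoid perfectoid base $S = \spa(R,R^+)$. First I would reduce to the affinoid perfectoid case: since $\Div^d$ and $(\Div^1)^d$ are both pro-\'etale sheaves and all the assertions (surjectivity, the quasi-pro-\'etale property, the identification of the quotient) are local on $S$ for the pro-\'etale topology, it suffices to work Zariski-locally on a fixed affinoid perfectoid $S$. Over such an $S$, an element of $\Div^d(S)$ is, pro-\'etale locally, a nonzero-fibrewise section $u \in H^0(X_S,\O(d))$ modulo $\underline{\O_E^\times}$, equivalently a closed divisor $Z \hookrightarrow X_S$ of degree $d$; lifting $\O(d)$ along $Y_S \to X_S$ and trivializing it, this divisor is cut out pro-\'etale locally by a primitive element $\xi \in \A_{R,R^0}$ of degree $d$ (in the sense recalled in the excerpt).

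Next, the heart of the matter: the factorization theorem says that, fibrewise over each point $s\in S$, a primitive element of degree $d$ in $\A_{K(s),K(s)^+}$ factors as a product of $d$ primitive elements of degree $1$, and this factorization is essentially unique up to units and reordering. I would upgrade this to a statement in families: given $\xi$ primitive of degree $d$ over $S$, there is a quasi-pro-\'etale cover $\widetilde S \to S$ over which $\xi = \xi_1 \cdots \xi_d$ with each $\xi_i \in \A_{\widetilde R,\widetilde R^0}$ primitive of degree $1$. The existence of such a cover is exactly what makes $\Sigma^d$ quasi-pro-\'etale and surjective: the cover $\widetilde S \to S$ is (a localization of) the fibre product of $(\Div^1)^d$ with $S$ over $\Div^d$, and controlling its geometry — that it is quasi-compact, quasi-separated, and quasi-pro-\'etale, so that the factorization can genuinely be spread out over a pro-\'etale-in-the-large cover rather than just pointwise — is the main obstacle. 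Here one invokes, as in \cite{Courbe}, that the space of degree-$1$ subdivisors of a fixed degree-$d$ divisor $Z \subset X_S$ is represented by $Z$ itself (via $S^\sharp \mapsto (S^\sharp \hookrightarrow X_S)$ for debasculements meeting $Z$), so that $(\Div^1)^d \times_{\Div^d} S$ is built from iterated fibre products of the finite-over-$S$ (indeed pro-\'etale-finite after the standard $\pi^\Z \times \underline{\O_E^\times}$ torsor manipulation, cf. the proof that $\Div^d \to \spa(\Fq)$ is separated) divisor $Z$; this gives quasi-compactness and the quasi-pro-\'etale property.

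Finally, to identify the quotient: $\mathfrak{S}_d$ acts on $(\Div^1)^d$ by permuting factors, and $\Sigma^d$ is visibly $\mathfrak{S}_d$-invariant, so it factors through $(\Div^1)^d/\mathfrak{S}_d$. That the induced map $(\Div^1)^d/\mathfrak{S}_d \to \Div^d$ is an isomorphism of pro-\'etale sheaves amounts to: (i) surjectivity, already established; and (ii) the fibres of $\Sigma^d$ are exactly the $\mathfrak{S}_d$-orbits, which is precisely the \emph{uniqueness} half of the factorization theorem — two ordered factorizations $(\xi_1,\dots,\xi_d)$ and $(\xi_1',\dots,\xi_d')$ of the same primitive element (equivalently, two orderings inducing the same divisor $Z$) differ by a permutation together with units, and the units disappear once one passes to $\Div^1 = \spa(E)^\diamond/\ph^\Z$ (where a degree-$1$ primitive element is remembered only up to $\A^\times$). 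A short diagram chase, checking that $(\Div^1)^d \times_{\Div^d} (\Div^1)^d \cong (\Div^1)^d \times \mathfrak{S}_d$, then yields that $\Sigma^d$ is an $\mathfrak{S}_d$-torsor onto its image and hence that the quotient map is an isomorphism. Once the in-families factorization is in hand, this last step is formal.
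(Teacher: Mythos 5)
Your arithmetic input is the same as the paper's (the existence half of the Fargues--Fontaine factorization of primitive elements for surjectivity on geometric points, the uniqueness half to see that fibres are $\mathfrak{S}_d$-orbits; the paper invokes th\'eor\`eme 6.2.1 de \cite{Courbe} for exactly this), but the central technical step is where your plan has a genuine gap. What must be proved is that for every strictly totalement discontinu perfecto\"ide $S$ with a map $S\drt \Div^d$, the pullback $T=(\Div^1)^d\times_{\Div^d}S\drt S$ is representable by a perfectoid space, pro-\'etale and quasicompact over $S$: this is the very definition of quasi-pro-\'etale, and it is also what upgrades \og surjectif sur les points g\'eom\'etriques\fg{} to \og \'epimorphisme de faisceaux pro-\'etales\fg. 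You acknowledge this is the main obstacle, but then dispose of it by asserting that degree-one subdivisors of the universal divisor $Z\subset X_S$ are represented by $Z$ itself, that $Z$ is \og finite over $S$\fg{} (the $\pi^\Z\times\underline{\O_E^\times}$-torsor manipulation you cite concerns trivializations of line bundles in the separatedness proof, not $Z\drt S$), and that iterated fibre products of $Z$ \og give\fg{} quasi-compactness and pro-\'etaleness; none of this is carried out, and the passage from such a description to pro-\'etaleness of $T\drt S$ is precisely the content requiring proof. The paper's actual argument is different: it rewrites the square as a fibre product over $\Div^d_S$, observes that $(\Div^1)^d_S$ and $\Div^d_S$ are spatial diamonds separated over $S$, so that $T$ is spatial and $T\drt S$ separated, and then applies la proposition 11.6 de \cite{ScholzeCohomologyDiamonds} to get that $T\drt S$ is affino\"ide pro-\'etale; only then does the $(C,C^+)$-point surjectivity from \cite{Courbe} finish the job.

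Your last step is moreover wrong as stated: $(\Div^1)^d\times_{\Div^d}(\Div^1)^d$ is \emph{not} isomorphic to $(\Div^1)^d\times\mathfrak{S}_d$, and $\Sigma^d$ is not an $\mathfrak{S}_d$-torsor onto its image, because the action is not free along the partial diagonals (the divisor $dD$, with $D$ of degree one, has a single ordered preimage $(D,\dots,D)$, fixed by all of $\mathfrak{S}_d$, whereas $(\Div^1)^d\times\mathfrak{S}_d$ has $d!$ points above it). What is true, and what suffices to identify the coequalizer $(\Div^1)^d/\mathfrak{S}_d$ with $\Div^d$, is that $g:(\Div^1)^d\times\mathfrak{S}_d\drt(\Div^1)^d\times_{\Div^d}(\Div^1)^d$ is an \emph{\'epimorphisme} of pro-\'etale sheaves; and establishing this is not formal once the families statement for $\Sigma^d$ is known --- the paper runs the same strictly-totally-disconnected/prop.~11.6 technique a second time for $g$, with uniqueness in the factorization theorem supplying surjectivity on geometric points. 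So your plan needs both the representability/pro-\'etaleness argument it currently lacks and a corrected formulation of the final descent step.
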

\begin{proof}
Soit $S\in \Perf_{\Fq}$ strictement totalement discontinu muni d'un morphisme 
$$S\drt \Div^d.$$
Montrons que dans le diagramme cartésien
$$
\xymatrix{
T \ar[d] \ar[r]^-f & S \ar[d] \\
(\Div^1)^d  \ar[r]^-{\Sigma^d} & \Div^d 
}
$$
le morphisme $T\drt S$ est -pro-étale quasicompact surjectif entre espaces perfectoïdes et donc un épimorphisme pro-étale. 
Le diagramme précédent se réécrit en un diagramme cartésien
$$
\xymatrix{
T \ar[d] \ar[r]^-f & S \ar[d] \\
(\Div^1)^d_S  \ar[r]^-{\Sigma^d} & \Div^d_S .
}
$$
L'avantage de cette réécriture est que maintenant $(\Div^1)^d_S$ et $\Div^d_S$ sont des diamants spatiaux séparés sur $S$ et donc $T$ est un diamant spatial et $f$ est séparé. On peut alors appliquer la proposition 11.6 de \cite{ScholzeCohomologyDiamonds}  pour conclure que $f$ est un morphisme affinoïde pro-étale entre espaces affinoïdes perfectoïdes. D'après le théorème 6.2.1 de \cite{Courbe} $f$ est surjectif au niveau des $(C,C^+)$-points pour tout corps affinoïde perfectoïde algébriquement clos $(C,C^+)$ 
 (nos diamants sont partiellement propres et leurs $(C,C^+)$-points coïncident avec leurs $(C,\O_C)$-points). 
 On en déduit que $\Sigma^d$ est quasi-pro-étale et est un épimorphisme de faisceaux pro-étales.

Il y a un morphisme 
$$
g:(\Div^1)^d\times \mathfrak{S}_d \ldrt (\Div^1)^d\times_{\Div^d} (\Div^1)^d.
$$ 
Appliquant la même technique que précédemment, pour tout $S$ strictement totalement discontinu et tout morphisme $S\drt (\Div^1)^d\times_{\Div^d} (\Div^1)^d$  on vérifie que le morphisme 
déduit par tiré en arrière par $g$ vers $S$ est affinoïde pro-étale surjectif. On conclut que $g$ est un épimorphisme de faisceaux pro-étales. Cela permet de conclure.
\end{proof}

%

On en déduit le résultat suivant qui n'était pas évident à priori.

\begin{prop}\label{prop: BC epointe est un diamant}
Pour $d>0$ le diamant absolu $\BB^{\ph=\pi^d}\setminus \{0\}$ est un diamant.
\end{prop}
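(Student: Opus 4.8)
The plan is to deduce the statement from the fact, just established in proposition \ref{prop: Div d comme quotient par le groupe symetrique}, that $\Div^d$ is a diamant, using that $P:=\BB^{\ph=\pi^d}\setminus\{0\}\to\Div^d$ is a pro-\'etale $\underline{E}^\times$-torsor. The guiding principle is that \emph{a pro-\'etale sheaf which is quasi-pro-\'etale over a diamant is again a diamant}. So the first step is to check that the torsor projection $P\to\Div^d$ is quasi-pro-\'etale. Since $\underline{\O_E^\times}$ is the cofiltered limit of the finite constant sheaves $\underline{\O_E^\times/(1+\pi^n\O_E)}$, it is pro-finite-\'etale over $\spa(\Fq)$, and $\underline{E}^\times=\coprod_{n\in\Z}\pi^n\underline{\O_E^\times}$ is a countable disjoint union of copies of it, hence quasi-pro-\'etale over $\spa(\Fq)$; therefore a $\underline{E}^\times$-torsor becomes quasi-pro-\'etale after a pro-\'etale cover of its base, and since quasi-pro-\'etaleness is local on the target for the pro-\'etale topology, $P\to\Div^d$ is quasi-pro-\'etale. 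Invoking the characterization of diamants from \cite{ScholzeCohomologyDiamonds} then finishes the argument.

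If one wants to spell this out without the black box, the idea is to build an explicit perfectoid presentation of $P$. Choose a presentation of the diamant $\Div^d$, i.e. a perfectoid space $U$ with a quasi-pro-\'etale surjection $U\to\Div^d$ such that $R:=U\times_{\Div^d}U$ is a perfectoid space. Set $\widetilde P:=P\times_{\Div^d}U$, a pro-\'etale $\underline{E}^\times$-torsor over the perfectoid space $U$; I claim $\widetilde P$ is a perfectoid space. The question is local on $U$, so one may assume $U=V$ is a quasi-compact affinoid perfectoid; then by Lemma 8.11 of \cite{ScholzeCohomologyDiamonds} the torsor $\widetilde P_{|V}$ is the inflation $T_0\times_{\underline{\O_E^\times}}\underline{E}^\times$ of a pro-finite-\'etale $\underline{\O_E^\times}$-torsor $T_0\to V$. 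Such a $T_0$, being a cofiltered limit of finite \'etale covers of $V$, is affinoid perfectoid, and
\[
T_0\times_{\underline{\O_E^\times}}\underline{E}^\times \;\simeq\; \coprod_{n\in\Z}T_0
\]
(one copy for each power of $\pi$) is a countable disjoint union of affinoid perfectoids, hence perfectoid; one then glues back. Now $\widetilde P\to P$, being the base change of $U\to\Div^d$, is a quasi-pro-\'etale surjection, and $\widetilde P\times_P\widetilde P=P\times_{\Div^d}R$ is again a pro-\'etale $\underline{E}^\times$-torsor over the perfectoid space $R$, hence perfectoid by the same argument; the criterion for diamants in \cite{ScholzeCohomologyDiamonds} then gives that $P$ is a diamant.

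The only point with real content — and the step I expect to require the most care — is the claim that a pro-\'etale $\underline{E}^\times$-torsor over a perfectoid space is representable by a perfectoid space; everything else is formal base-change bookkeeping resting on the already-acquired fact that $\Div^d$ is a diamant. That claim is handled by the reduction (via Lemma 8.11) to a pro-finite-\'etale $\underline{\O_E^\times}$-torsor, which is a cofiltered limit of finite \'etale covers hence affinoid perfectoid, together with the stability of perfectoid spaces under countable disjoint unions. It is worth stressing the contrast with $\BB^{\ph=\pi^d}$ itself, which is only an absolute diamant: it is precisely the removal of the zero section that makes $\Div^d=\BB^{\ph=\pi^d}\setminus\{0\}/\underline{E}^\times$ a genuine torsor quotient and connects it, through proposition \ref{prop: Div d comme quotient par le groupe symetrique}, to the well-behaved diamant $\Div^1=\spa(E)^\diamond/\ph^\Z$.
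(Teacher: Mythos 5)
Your proof is correct and follows essentially the same route as the paper: Fargues likewise uses that $\BB^{\ph=\pi^d}\setminus\{0\}\drt \Div^d$ is pro-\'etale surjectif onto the diamant $\Div^d$ of the proposition \ref{prop: Div d comme quotient par le groupe symetrique}, and concludes by citing la proposition 9.6 de \cite{ScholzeCohomologyDiamonds} (un faisceau pro-\'etale quasi-pro-\'etale au-dessus d'un diamant est un diamant). Your second paragraph simply unwinds that citation by hand (pr\'esentation perfecto\"ide de $\Div^d$ et lemme 8.11 de \cite{ScholzeCohomologyDiamonds} pour le $\underline{E}^\times$-torseur), which the paper leaves as a black box.
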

\begin{proof}
Le morphisme $\BB^{\ph=\pi^d}\setminus \{ 0\}\drt \Div^d$ est pro-étale surjectif et on peut appliquer la proposition 9.6 de \cite{ScholzeCohomologyDiamonds}.
\end{proof}

\begin{rema}
On verra dans la section \ref{sec: le cas d egales car} que lorsque $E=\Fq\llparent \pi\rrparent$ alors $\BB^{\ph=\pi^d}\setminus \{0\}$ est même un espace perfectoïde.
\end{rema}

\section{$\Qlb$-systèmes locaux sur $\Div^1_{\Fqb}$ et représentations du groupe de Weil}
\label{sec:sys locaux sur Div1}

Soit $\ell$ un nombre premier éventuellement égal à $p$. 
Soit $D$ un diamant. 
Si $[\Q_\l:\Ql]<+\infty$, par définition $\Q_\l$-système local sur $D$ est un faisceau de $\underline{\Q_{\l}}$-modules localement constant de dimension finie sur le site quasi-pro-étale de $D$. 
 Par définition un $\Qlb$-système local sur $D$ est \og un objet de la forme $\F\otimes_{\Q_\l} \Qlb$ pour $\F$ un $\Q_{\l}$-système local et $\Q_\l\subset \Qlb$ comme précédemment.\fg{}
\\

Fixons une clôture algébrique $\Eb$ de $E$ de corps résiduel $\Fqb$ et soit $\breve{E}$ le complété de l'extension maximale non-ramifiée de $E$ dans $\Eb$ dont on note $\s$ le Frobenius. On note 
$
W_E
$
le groupe de Weil de $E$. La proposition qui suit dit en quelques sortes que 
$$
\og \pi_1 (\Div^1_{\Fqb}) =W_E \fg{}.
$$

\begin{prop}\label{prop: identification Qlb sys locaux et rep groupe de Weil}
La catégorie des $\Qlb$-systèmes locaux sur $\Div^1_{\Fqb}$ s'identifie à celle des $\Qlb$-représentations continues de $W_E$.
\end{prop}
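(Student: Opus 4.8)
The plan is to establish an equivalence of categories between $\Qlb$-systèmes locaux sur $\Div^1_{\Fqb}$ and continuous $\Qlb$-representations of $W_E$, using the explicit description of $\Div^1$ obtained in the previous section. By Proposition \ref{prop: identidification entre Div1 et Spa divise} one has
$$
\Div^1_{\Fqb}=\spa (\breve E^\flat)/\underline{E}^\times,
$$
where the $\underline{E}^\times$-torsor over $\spa(\breve E^\flat)$ is the Lubin--Tate torsor $\mathbb{T}_{\LT}$. First I would note that $\spa(\breve E^\flat)=\spa(E)^\diamond\times_{\spa(\Fq)}\spa(\Fqb)$, whose étale (equivalently quasi-pro-étale) site is that of a point: indeed $\breve E^\flat$ is the tilt of the complete algebraically-unramified-closed field $\breve E$, and its étale fundamental group recovers $\Gal(\Eb|\breve E)=I_E$, the inertia subgroup of $W_E$. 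Passing to the quotient by $\underline{E}^\times$ adds, via descent along the pro-étale torsor $\mathbb{T}_{\LT}\to \spa(\breve E^\flat)$, exactly the data of a compatible $E^\times$-action; concretely a $\Qlb$-système local on $\Div^1_{\Fqb}$ is a $\Qlb$-système local $\F$ on $\spa(\breve E^\flat)$ together with a descent datum relative to the $\underline{E}^\times$-action, i.e. an action of $E^\times$ on $\F$ compatible with the $I_E$-structure.

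The key step is therefore to identify this combined datum with a representation of $W_E$. I would use the canonical Frobenius structure $can:\ph_{E^\diamond}^*\mathbb{T}_{\LT}\iso \mathbb{T}_{\LT}$ recalled in Proposition \ref{prop: identidification entre Div1 et Spa divise}: the extra geometric Frobenius $\ph$ coming from base change to $\Fqb$ (recall $\ph_{E^\diamond}\circ\ph_S$ is the absolute Frobenius, which acts trivially on étale sites) means that descending from $\spa(\breve E^\flat)$ to $\Div^1_{\Fqb}=\spa(\breve E^\flat)/\underline{E}^\times$ and then remembering the $\ph$-equivariance amounts to prescribing, on top of the $I_E$-action, the action of a Frobenius lift together with its interaction with $E^\times$. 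Since the Lubin--Tate torsor realises the local reciprocity isomorphism $\chi_{\LT}:\Gal(E_\infty|E)\iso\O_E^\times$ and its inflation to $E^\times$ corresponds under Artin reciprocity to $W_E^{ab}$, the semidirect combination of $I_E$ with this Frobenius-twisted $E^\times$-action is precisely $W_E$ (via $W_E\simeq I_E\rtimes \Z$, Frobenius acting through $\chi_{\LT}$ on the torsor). So a $\ph$-equivariant $E^\times$-equivariant $\Qlb$-système local on $\spa(\breve E^\flat)$ is the same as a continuous $\Qlb$-representation of $W_E$. The continuity and finite-dimensionality are automatic from the definition of $\Qlb$-système local (it is $\F\otimes_{\Q_\l}\Qlb$ for a $\Q_\l$-système local, which is locally constant of finite rank on the quasi-pro-étale site, hence cut out by an open subgroup).

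The main obstacle I anticipate is making the descent step fully rigorous: $\Div^1_{\Fqb}$ is not spatial (as emphasised in the remark after the Severi--Brauer discussion — it is not even quasi-separated), so one must be careful that quasi-pro-étale sheaves descend along the pro-étale $\underline{E}^\times$-torsor $\mathbb{T}_{\LT}\to\spa(\breve E^\flat)$ and that "locally constant of finite rank" is preserved. I would handle this by working with the presentation $\spa(E_\infty^\flat)\to \spa(E)^\diamond$ from §\ref{sec:description de Div1 en termes de debasculements} (a pro-(finite étale) $\underline{\O_E^\times}$-torsor after dividing by $\pi^\Z$, by the Kummer/Artin--Schreier analysis there), reducing the problem to ordinary finite-étale descent over the point $\spa(\breve E^\flat)$ together with a $\Z$-action encoding $\pi$, and then invoking Scholze's descent for quasi-pro-étale sheaves on diamants. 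The identification of the resulting monodromy group with $W_E$ rather than just $W_E^{ab}$ is what allows the category to be that of \emph{all} continuous representations, not only the rank-one (abelian) ones; this is the point where the non-commutativity of $W_E$ genuinely enters, even though only the abelian case is needed for the application to $\GL_1$.
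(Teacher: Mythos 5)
You have the right circle of ideas (Weil descent, the Lubin--Tate torsor), but your key identification is wrong and the error propagates through the whole descent dictionary. You write $\Div^1_{\Fqb}=\spa(\breve{E}^\flat)/\underline{E}^\times$ with $\mathbb{T}_{\LT}$ sitting over $\spa(\breve{E}^\flat)$. First, $\breve{E}$ is discretely valued, hence not perfectoid, so $\breve{E}^\flat$ is undefined; what exists is the diamond $\spa(\breve{E})^\diamond=\spa(E)^\diamond\times\spa(\Fqb)$. More seriously, the two presentations furnished by Proposition \ref{prop: identidification entre Div1 et Spa divise} are $\Div^1_{\Fqb}=\spa(\breve{E})^\diamond/\ph_{E^\diamond}^\Z$ (quotient by the \emph{partial Frobenius}) and $\Div^1_{\Fqb}=\big(\BB^{\ph=\pi}_{\Fqb}\setminus\{0\}\big)/\underline{E}^\times$, where the covering space of the second is $\spa(E_\infty^\flat)\times_{\spa(\Fq)}\spa(\Fqb)$, whose fundamental group is $\Gal\big(\Eb\,|\,\breve{E}E_\infty\big)$, \emph{not} $I_E$. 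You have superposed the two: you keep the small cover $\spa(\breve{E})^\diamond$ (with $\pi_1=I_E$) but ask for descent data under the large group $E^\times$ \emph{and} a Frobenius structure. This overcounts: $\O_E^\times\subset E^\times$ is already absorbed into $I_E$ (it acts through $\Gal(\breve{E}E_\infty|\breve{E})\simeq\O_E^\times$), and the only residual action of $E^\times$ on $\spa(\breve{E})^\diamond$ is that of $E^\times/\O_E^\times\simeq\pi^\Z$, which \emph{is} the partial Frobenius. So ``$I_E$-representation $+$ compatible $E^\times$-action $+$ $\ph$-equivariance'' is not the category of $W_E$-representations, and it is not what descent along either of the actual maps produces. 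The correct dictionaries are: ($I_E$-rep)$\,+\,$(Frobenius structure), via the $\ph_{E^\diamond}^\Z$-quotient, or ($\Gal(\Eb|\breve{E}E_\infty)$-rep)$\,+\,$(compatible $E^\times$-action), via the $\underline{E}^\times$-torsor.

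For comparison, the paper's proof uses only the first dictionary and is purely formal: local systems on $\Div^1_{\Fqb}=(\spa(E)^\diamond\times\spa(\Fqb))/\ph_{E^\diamond}^\Z$ are $\ph_{E^\diamond}$-equivariant local systems on $\spa(E)^\diamond\times\spa(\Fqb)=\spa(\breve{E})^\diamond$; since the absolute Frobenius $\ph_{E^\diamond}\circ\ph_{\Fqb}$ acts trivially on the quasi-pro-\'etale site, these coincide with $\ph_{\Fqb}=\s$-equivariant local systems on $\spa(\breve{E})^\diamond$, i.e.\ continuous representations of $W_E$. No Lubin--Tate uniformization and no reciprocity map is needed at this stage; your appeal to ``Artin reciprocity'' to glue $I_E$ and $E^\times$ into $W_E$ is both a symptom of the structural confusion above and at odds with the paper's stated aim of reproving local class field theory (the Lubin--Tate torsor only enters later, in Proposition \ref{prop: reciprocite Artin}, to compute which character corresponds to $\E_\chi$). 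If you want to salvage your route through the $\underline{E}^\times$-torsor, replace $\spa(\breve{E})^\diamond$ by the actual covering space $\big(\BB^{\ph=\pi}_{\Fqb}\setminus\{0\}\big)$ and drop the extra Frobenius ($\pi\in E^\times$ already plays that role); the extension of $E^\times\simeq W(\breve{E}E_\infty|E)$ by $\Gal(\Eb|\breve{E}E_\infty)$ so obtained is $W_E$ by elementary Lubin--Tate theory, with no use of the reciprocity law.
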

\begin{proof}
Puisque pour tout $S\in \Perf_{\Fq}$ le morphisme $Y_S\drt X_S$ est un isomorphisme local la catégorie des $\Qlb$-systèmes locaux sur $$Div^1_{\Fqb}= (\spa (E)^\diamond \times \spa (\Fqb) )/\ph_{E^\diamond}^\Z$$ s'identifie à celle des $\Qlb$-systèmes locaux $\ph_{E^\diamond}$-equivariants sur  
$$
\spa (E)^\diamond \times \spa (\Fqb) .
$$
Puisque le Frobenius absolu $\ph_{E^\diamond}\circ \ph_{\Fqb}$ de ce diamant agit trivialement sur le site quasi-pro-étale cette dernière catégorie s'identifie à la catégories des $\Qlb$-systèmes locaux $\ph_{\Fqb}$-équivariants sur ce même diamant. Remarquons maintenant que 
$$
\spa (E)^\diamond \times \spa (\Fqb ) = \spa (\breve{E})^\diamond
$$
et que via cette identification $Id\times \ph_{\Fqb}=\s$. Notre catégorie s'identifie donc à celle des $\Qlb$-systèmes locaux $\s$-équivariants sur $\spa (\breve{E})^\diamond$. 
\end{proof}

\begin{rema}
La proposition précédente est la raison principale pour laquelle on ne travaille pas avec des coefficients de torsion. L'apparition du groupe de Weil et non du groupe de Galois dans cet énoncé est un indice sérieux 
que l'objet $\Div^1_{\Fqb}$ est le bon objet à considérer et non $\spa (E)^\diamond$ dont le $\pi_1$ serait $\Gal (\overline{E}|E)$.
\end{rema}

Soit maintenant $\chi:E^\times \drt \Ql^\times$ un caractère. Il définit un $\Qlb$-système local de rang $1$ $\E_\chi$ sur 
$$
\BB_{\Fqb}^{\ph=\pi}\setminus \{0\}/\underline{E}^\times
$$
par poussé en avant par $\chi$ du $\underline{E}^\times$-torseur $\BB_{\Fqb}^{\ph=\pi}\setminus \{0\}\drt \BB_{\Fqb}^{\ph=\pi}\setminus \{0\}/\underline{E}^\times$. La proposition qui suit dit en quelques sortes que le morphisme 
$$
\og W_E=\pi_1 (\Div^1_{\Fqb}) \xrig{\ \pi_1(\text{AJ}^1) \ } \pi_1 (\Pic^1_{\Fqb})=E^\times\fg{}
$$
est donné par l'inverse de l'application de réciprocité d'Artin.

\begin{prop}\label{prop: reciprocite Artin}
Via l'identification $\Div^1_{\Fqb}= \BB_{\Fqb}^{\ph=\pi}\setminus \{0\}/\underline{E}^\times$ 
le caractère de $W_E$ associé à $\E_\chi$ est obtenu en composant $\chi^{-1}$ avec l'inverse de l'application de réciprocité d'Artin $\text{Art}:E^\times\xrig{\sim} W_E^{ab}$.
\end{prop}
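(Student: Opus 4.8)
Le plan est de passer par les deux descriptions de $\Div^1_{\Fqb}$ et de calculer explicitement le caract�re de $W_E$ attach� au syst�me local $\E_\chi$. D'apr�s la proposition \ref{prop: identification Qlb sys locaux et rep groupe de Weil}, un $\Qlb$-syst�me local de rang $1$ sur $\Div^1_{\Fqb}$ correspond � un caract�re continu de $W_E$, et ce caract�re se lit concr�tement via l'identification $\Div^1_{\Fqb}=\spa(\breve E)^\diamond/\s^\Z$ comme l'action de $W_E$ sur la fibre du syst�me local tir� en arri�re � $\spa(\Eb^\flat)$ (ou plut�t � un point g�om�trique convenable), munie de sa structure $\s$-�quivariante. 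Concr�tement, $W_E$ agit via son quotient $\Gal(\Eb|\breve E)\rtimes \s^\Z$ et la th�orie de Lubin-Tate fournit d�j� tous les ingr�dients.

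Premi�rement, je rappellerais l'identification de la proposition \ref{prop: identidification entre Div1 et Spa divise}: sous $\Div^1_{\Fqb}=\BB_{\Fqb}^{\ph=\pi}\setminus\{0\}/\underline E^\times$, le $\underline E^\times$-torseur pro-�tale $\BB_{\Fqb}^{\ph=\pi}\setminus\{0\}\to\BB_{\Fqb}^{\ph=\pi}\setminus\{0\}/\underline E^\times$ tir� en arri�re � $\spa(E)^\diamond/\ph^\Z$ est le torseur $(\mathbb T_{\LT},\pi\times can)$, obtenu par inflation � $E^\times$ du $\underline{\O_E^\times}$-torseur de Lubin-Tate $\spa(E_\infty^\flat)\to\spa(E)^\diamond$, o� $E_\infty$ est le compl�t� de l'extension de $E$ engendr�e par les points de torsion de $\LT$. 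Deuxi�mement, j'observerais que le syst�me local $\E_\chi$ sur $\Div^1_{\Fqb}$, pouss� en avant du torseur $\BB_{\Fqb}^{\ph=\pi}\setminus\{0\}$ par $\chi$, correspond donc, apr�s tir� en arri�re � $\spa(\breve E)^\diamond$, au syst�me local $\s$-�quivariant d�duit du torseur de Lubin-Tate sur $\spa(\breve E)^\diamond=\spa(E)^\diamond\times\spa(\Fqb)$ et de $\chi$.

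Troisi�mement, le c\oe ur du calcul: l'action du sous-groupe d'inertie (c'est-�-dire $\Gal(\Eb|\breve E)$, qui agit sur la partie "$\Gal$" du groupe fondamental de $\spa(\breve E)^\diamond$) sur la fibre est donn�e par $\chi$ compos� avec le caract�re de Lubin-Tate $\chi_{\LT}:\Gal(E_\infty|E)\iso\O_E^\times$, restreint � l'inertie; tandis que l'action de $\s$ (le Frobenius g�om�trique, provenant du facteur $\ph_{\Fqb}$ via $Id\times\ph_{\Fqb}=\s$) est donn�e par la structure de Frobenius $\pi\times can$ du torseur, donc correspond � $\chi(\pi)$ (c'est ici qu'intervient le facteur $\pi$ dans $\pi\times can$, qui traduit que $\BB^{\ph=\pi}$ et non $\BB^{\ph=Id}$ est en jeu). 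Or par normalisation de la th�orie du corps de classe local, l'application de r�ciprocit� d'Artin $\mathrm{Art}:E^\times\iso W_E^{ab}$ envoie $\O_E^\times$ sur l'inertie via $\chi_{\LT}^{-1}$ (Lubin-Tate) et une uniformisante sur un rel�vement du Frobenius g�om�trique; en comparant terme � terme, le caract�re de $W_E$ attach� � $\E_\chi$ est exactement $\chi^{-1}\circ\mathrm{Art}^{-1}$. Le point d�licat sera la gymnastique des normalisations: bien aligner la convention "Frobenius g�om�trique vs arithm�tique", la convention sur $\mathrm{Art}$ (uniformisante $\mapsto$ Frobenius g�om�trique), et l'apparition de l'inverse et du facteur $\pi$ dans la structure de Frobenius $\pi\times can$; une fa�on s�re de tout v�rifier est de tester sur les caract�res non ramifi�s (o� $E_\infty$ n'intervient pas et tout se r�duit � $\BB^{\ph=\pi}\setminus\{0\}$ sur $\Fqb$, dont le $\Fqb$-rev�tement de Frobenius donne directement l'action de $\s$) puis sur un caract�re d'ordre fini mod�r� et enfin sur les caract�res sauvagement ramifi�s via les renormalisations (\ref{eq:renormalisation Kummer}) et (\ref{eq:renormalisation Artin Schreier}) d�j� explicit�es, ce qui couvre tous les caract�res de $E^\times$ par densit� et multiplicativit�.
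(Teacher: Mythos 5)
Votre proposition suit essentiellement la d\'emonstration du texte : elle passe par la proposition \ref{prop: identidification entre Div1 et Spa divise} pour identifier le torseur tir\'e en arri\`ere au torseur de Lubin--Tate muni de la structure de Frobenius $\pi\times can$, puis par les formules explicites $\text{Art}^{-1}(\tau)=\chi_{\LT}(\tau)^{-1}$ pour $\tau\in I_E$ et $\text{Art}^{-1}(\s)=\pi$, exactement comme dans le papier. Seule mise au point, qui est pr\'ecis\'ement la question de normalisation que vous signalez : dans le texte la donn\'ee de descente obtenue sur $\spa(\breve{E})^\diamond$ est $\pi^{-1}$ fois la donn\'ee canonique (on applique $\ph_{\Fqb}^*$ puis on prend l'inverse), de sorte que $\s$ agit par $\chi(\pi)^{-1}$ et non par $\chi(\pi)$, ce qui est bien ce qu'exige la conclusion $\chi^{-1}\circ\text{Art}^{-1}$.
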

\begin{proof}
  Fixons un groupe de Lubin-Tate sur $\O_E$ associé au choix de $\pi$ et soit $\chi_{\LT}:\Gal (\Eb|E)\drt \O_E^\times$ le caractère de Lubin-Tate associé. Avec les notations de la section \ref{sec:description de Div1 en termes de debasculements} on a $$W_E^{ab}\subset \Gal (\Eb |E )^{ab} = \Gal (\breve{E}_\infty |E)
  =\Gal (\breve{E}_\infty |E_\infty)\times \Gal (E_\infty |E).$$
  ce qui permet de définir canoniquement $\s\in W_E^{ab}$. On a alors 
  \begin{eqnarray*}
  \forall \tau\in I_E,\ Art^{-1}(\tau) &=& \chi_{\LT} (\tau)^{-1} \\
  Art^{-1} (\s) &=& \pi.
  \end{eqnarray*}
 On applique maintenant la proposition \ref{prop: identidification entre Div1 et Spa divise}. 
On part du $\underline{E}^\times$-torseur $\ph_{E^\diamond}$-équivariant 
$$
\mathbb{T}_{\LT,\Fqb}:=\mathbb{T}_{\LT}\times  \spa (\Fqb) \ldrt \spa (E)^\diamond \times \spa (\Fqb)
$$  
muni de 
$$
\pi\times can:
\ph_{E^\diamond}^* \mathbb{T}_{\LT,\Fqb} \iso \mathbb{T}_{\LT,\Fqb}
$$  
On lui appliquer $\ph_{\Fqb}^*$ pour obtenir
$$
\mathbb{T}_{\LT,\Fqb}=(\ph_{\Fqb}\ph_{E^\diamond})^* \mathbb{T}_{\LT,\Fqb} \iso \ph_{\Fqb}^*\mathbb{T}_{\LT,\Fqb}
$$
dont on prend l'inverse. La donnée de descente obtenue donc sur $$\mathbb{T}_{\LT}\otimes_E \breve{E}=\mathbb{T}_{\LT, \Fqb}\drt \spa (\breve{E})^\diamond,$$ $\s^*\mathbb{T}_{\LT,\Fqb}\xrig{\sim}\mathbb{T}_{\LT,\Fqb}$, est donnée par $\pi^{-1}$ fois la donnée de descente canonique.
Le résultat s'en déduit.
\end{proof}

\section{Construction du symétrisé d'un système local abélien sur $\Div^1_{\Fqb}$}
\label{sec:construction sym}

On montre dans cette section que la construction qui à un $\Qlb$-système local de rang $1$
sur $\Div^1_{\Fqb}$ associe son symétrisé sur $\Div^d_{\Fqb}$ a bien un sens comme dans le cas \og classique\fg{} i.e. on a  un morphisme
$$
\og \pi_1 ( \Div^d_{\Fqb} )\ldrt \pi_1 (\Div^1_{\Fqb})^{ab}\fg{}
$$
(dont on montrera au final que c'est un isomorphisme). 

\subsection{Construction}

Dans cette section on utilise constamment le fait que les morphismes étales finis entre espaces perfectoïdes satisfont la descente pro-étale (\cite{ScholzeCohomologyDiamonds} prop. 7.7). Par exemple, si $D$ est un diamant et $\Gamma$ un groupe fini tout $G$-torseur étale au dessus de $D$ est représentable par un diamant étale fini au dessus de $D$.
\\

Soit $G$ un groupe abélien fini et $$U\drt \Div^1_{\Fqb}$$ un $G$-torseur  étale.
Pour $d>0$, $$U^d\drt (\Div^1_{\Fqb})^d$$ est un $G^d$-torseur que l'on peut pousser en avant via l'application somme $s:G^d\drt G$ et obtenir un $G$-torseur
\begin{equation} \label{eq:G torseur somme }
U^d\underset{G^d,s}{\times} G \ldrt (\Div^1_{\Fqb})^d.
\end{equation}
Il est muni d'une action de $\mathfrak{S}_d$ compatible à celle sur $(\Div^1_{\Fqb})^d$.

\begin{lemme}
Le quotient pro-étale par $\mathfrak{S}_d$
\begin{eqnarray}\label{eq:G torseur divise par Sd}
\big ( U^d\underset{G^d,s}{\times} G\big ) / \mathfrak{S}_d \ldrt (\Div^1_{\Fqb})^d/\mathfrak{S}_d
\end{eqnarray}
est un $G$-torseur étale qui descend le $G$-torseur (\ref{eq:G torseur somme }) via $(\Div^1_{\Fqb})^d\drt (\Div^1_{\Fqb})^d/\mathfrak{S}_d$.
\end{lemme}
\begin{proof}
Il suffit de vérifier que le diagramme 
$$
\xymatrix{
U^d\underset{G^d,s}{\times} G \ar[d] \ar[r] & \big ( U^d\underset{G^d,s}{\times} G \big )/\mathfrak{S}_d \ar[d] \\
(\Div^1_{\Fqb})^d \ar[r] & (\Div^1_{\Fqb})^d/\mathfrak{S}_d.
}
$$
est cartésien.
En effet, si c'est le cas alors (\ref{eq:G torseur divise par Sd}) est un $G$-torseur étale localement pour la topologie quasi-pro-étale au dessus du diamant $(\Div^1_{\Fqb})^d/\mathfrak{S}_d$ est est donc un $G$-torseur étale par descente des morphismes étales finis pour la topologie pro-étale.  La surjectivité du morphisme 
$$
f:U^d\underset{G^d,s}{\times} {G} \ldrt (\Div^1_{\Fqb})^d \underset{(\Div^1_{\Fqb})^d/\mathfrak{S}_d}{\times} \big ( U^d\underset{G^d,s}{\times} G \big )/\mathfrak{S}_d 
$$
est immédiate. Pour l'injectivité, remarquons que l'on a 
$$
 U^d\underset{G^d,s}{\times} G   = H\bc U^d
$$
où $H=\{(g_1,\dots,g_d)\in G^d \ | \ \prod_{i=1}^d g_i=1\}$. On note $[x_1,\dots,x_d]$ la classe d'une section de $U^d$ modulo l'action de $H$. Si $f([x_1,\dots,x_d])=f([y_1,\dots,y_d])$ il existe (étale localement) $\s\in \mathfrak{S}_d$ et $(g_1,\dots,g_d)\in G^d$ tels que 
\begin{eqnarray*}
[y_1,\dots,y_d] &=& [x_{\s (1)},\dots,x_{\s (d)}]  \\
(y_1,\dots,y_d) &=& (g_1.x_1,\dots, g_d.x_d).
\end{eqnarray*}
Il existe donc (toujours étale localement) $h_1,\dots,h_d\in G$ vérifiant $\prod_{i=1}^d h_i=1$ tels que 
$$
(h_1.y_1,\dots,h_d.y_d) = ( x_{\s(1)},\dots,x_{\s(d)}).
$$
On obtient alors pour tout indice $i$, $h_i g_i.x_{\s(i)} = x_i$. En itérant $d$-fois cette relation on en déduit que 
$$
\big (\prod_{i=1}^d h_i g_i\big ).x_1=x_1
$$
et que donc $\prod_{i=1}^d h_i g_i=1$ puisque l'action de $G$ sur $U$ est libre. Cela permet de conclut que $(g_1,\dots,g_d)\in H$ ce qui démontre l'injectivité.
\end{proof}

\begin{rema}
La preuve précédente s'applique dans un contexte très général. Par exemple, si $X$ est une courbe projective lisse sur un corps, $\Div^1=X$ et $\Div^d=X^d/\mathfrak{S}_d$ (quotient fppf) puisque les morphismes étales finis satisfont à la descente pour la topologie fppf.
\end{rema}

\begin{prop}
Soit $\La$ un anneau fini et $\E$ un faisceau de $\La$-modules localement constant libre de rang $1$ sur $(\Div^1_{\Fqb})_{\et}$. Pour $d>0$ le faisceau
$$
\big ( \Sigma^d_* \E^{\boxtimes d} \big )^{\mathfrak{S}_d}
$$
est localement constant libre de rang $1$ sur $(\Div^d_{\Fqb})_{\et}$.
\end{prop}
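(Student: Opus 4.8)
Le plan est de se ramener au lemme précédent sur les $G$-torseurs symétrisés, en décomposant $\E$ selon ses composantes isotypiques et en traitant séparément la partie localement constante (provenant d'un revêtement étale fini) et l'algèbre structurale. Plus précisément, puisque $\La$ est fini et $\E$ localement constant libre de rang $1$, sur un revêtement étale fini connexe convenable $U\drt \Div^1_{\Fqb}$ le faisceau $\E$ devient constant; quitte à rendre $U$ galoisien on peut supposer que $U\drt \Div^1_{\Fqb}$ est un $G$-torseur étale pour un groupe fini $G$ et que $\E$ correspond à un caractère $\chi:G\drt \La^\times$. La section nulle fibre à fibre et la liberté de rang $1$ garantissent que cette présentation existe (les morphismes étales finis entre perfectoïdes satisfont la descente pro-étale, cf. \cite{ScholzeCohomologyDiamonds} prop. 7.7, donc tout faisceau de $\La$-modules localement constant de rang fini provient bien d'un tel revêtement).

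La première étape est de vérifier la formule de changement de base
$$
\Sigma^d_* \E^{\boxtimes d} \simeq \big (\Sigma^d_* \underline{\La}^{\boxtimes d}\big ) \otimes_{\underline{\La}} (\text{twist par }\chi)
$$
plus précisément, en notant $\pi:U^d\times_{G^d,s} G\drt (\Div^1_{\Fqb})^d$ le $G$-torseur (\ref{eq:G torseur somme }), que $\E^{\boxtimes d}$ est le poussé en avant le long de $U^d\drt (\Div^1_{\Fqb})^d$ du faisceau constant tordu par $\chi^{\boxtimes d}$, lequel descend via $s$ en le faisceau sur le $G$-torseur somme associé à $\chi$. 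Comme $\Sigma^d$ est fini localement sur la base au sens quasi-pro-étale (prop. \ref{prop: Div d comme quotient par le groupe symetrique} donne $\Div^d=(\Div^1)^d/\mathfrak{S}_d$ comme quotient pro-étale, et $\Sigma^d$ est quasi-pro-étale surjectif), le passage aux invariants sous $\mathfrak{S}_d$ commute à la prise de sections et l'on obtient que $\big(\Sigma^d_*\E^{\boxtimes d}\big)^{\mathfrak{S}_d}$ est le faisceau de $\La$-modules associé au $G$-torseur étale (\ref{eq:G torseur divise par Sd}) via le caractère $\chi$ (où l'action de $\mathfrak{S}_d$ sur le torseur somme est bien celle qui rend cohérent le symétrisé, puisque $\chi\circ s$ est $\mathfrak{S}_d$-invariant). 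D'après le lemme précédent ce $G$-torseur est étale, donc le faisceau associé par $\chi$ est localement constant; il est libre de rang $1$ car on tord un torseur par un caractère à valeurs dans $\La^\times$.

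L'obstacle principal sera de justifier proprement l'interversion entre $\Sigma^d_*$ et $(-)^{\mathfrak{S}_d}$, autrement dit de montrer que $\big(\Sigma^d_*\E^{\boxtimes d}\big)^{\mathfrak{S}_d}$ se calcule bien fibre à fibre comme le symétrisé attendu. Le point délicat est que $\Div^d$ n'est pas spatial (cf. la remarque après la preuve du lemme de séparation), ce qui interdit d'appliquer naïvement les énoncés de cohomologie de \cite{ScholzeCohomologyDiamonds}; mais on travaille ici uniquement avec des faisceaux \emph{étales} sur le site $(\Div^d_{\Fqb})_{\et}$, et l'égalité $(X_S)_{\et}=(\Div^1_S)_{\et}$ ainsi que la description de $\Div^d$ comme quotient pro-étale permettent de se ramener à un calcul sur $(\Div^1_{\Fqb})^d$, où tout est maîtrisé. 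La vérification de la compatibilité fibre à fibre se fait alors en un point, en utilisant que pour un corps perfectoïde algébriquement clos $(C,\O_C)$ le résultat de factorisation des éléments primitifs (\cite{Courbe} chap. 2 sec. 2.4, ou th. 6.2.1) identifie concrètement les fibres de $\Sigma^d$ et ramène l'énoncé au cas classique du symétrisé d'un système local de rang $1$ sur un produit, pour lequel $\pi_1\big((\Div^1)^d/\mathfrak{S}_d\big)\drt\pi_1(\Div^1)^{ab}$ donne la conclusion.
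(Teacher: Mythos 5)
Your overall strategy is the one the paper follows: trivialize $\E$ on a finite etale $G$-torsor over $\Div^1_{\Fqb}$ and identify $\big(\Sigma^d_*\E^{\boxtimes d}\big)^{\mathfrak{S}_d}$ with the sheaf attached, via a character, to the symmetrized torsor of the preceding lemma (the paper simply takes $U=\text{Isom}(\underline{\La},\E)$ with $G=\La^\times$, so neither connectedness of $U$ nor an auxiliary character $\chi$ is needed). However, the step you yourself single out as the main obstacle is precisely where your argument has a genuine gap, and the way you propose to close it does not work as stated. Concluding with the arrow $\pi_1\big((\Div^1_{\Fqb})^d/\mathfrak{S}_d\big)\drt \pi_1(\Div^1_{\Fqb})^{ab}$ is circular: no such $\pi_1$-formalism is available in this setting (the paper only uses it as a heuristic, in quotation marks), and producing exactly this descent of abelian local systems to the symmetric power is what the present proposition, together with the preceding lemma, is meant to establish. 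Likewise, a verification \emph{fibre a fibre} would require a base-change statement for $\Sigma^d_*$ at geometric points, i.e. for a quasi-pro-etale morphism which is not etale; you neither state nor justify such a result, and the factorization theorem of \cite{Courbe} identifies the fibers of $\Sigma^d$ set-theoretically but does not by itself compute the stalks of the pushforward.

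The paper avoids both difficulties by a purely local computation on an explicit etale cover: one restricts $\big(\Sigma^d_*\E^{\boxtimes d}\big)^{\mathfrak{S}_d}$ to the finite etale cover $\big(U^d\underset{G^d,s}{\times}G\big)/\mathfrak{S}_d\drt \Div^d_{\Fqb}$ furnished by the preceding lemma. Since the square appearing in that lemma is Cartesian, this restriction equals $f_*\big((\E^{\boxtimes d})_{|U^d\underset{G^d,s}{\times}G}\big)$ where $f$ is the quotient map by $\mathfrak{S}_d$; only base change along an etale morphism is used, which is formal, so no stalkwise argument and no spatiality of $\Div^d$ enter. One then checks that $(\E^{\boxtimes d})_{|U^d\underset{G^d,s}{\times}G}$ is constant, because the monodromy $G^d\drt \La^\times$ is the product character and hence factors through $s$; that the induced action of $\mathfrak{S}_d$ on $\underline{\La}$ is trivial; and that $(f_*\underline{\La})^{\mathfrak{S}_d}=\underline{\La}$, by computing sections over an etale $V$ through the identification of $|V|/\mathfrak{S}_d$ with the corresponding space downstairs. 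If you replace your fiberwise paragraph by this restriction computation (which goes through verbatim with your $G$ and $\chi$), your proof is complete and coincides with the paper's.
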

\begin{proof}
Soit $$U=\text{Isom} (\underline{\La},\E)\ldrt \Div^1_{\Fqb}$$ le $\La^\times$-torseur sur $\Div^1_{\Fqb}$ des trivialisations de $\E$.  
On a 
$$
( \Sigma^d_* \E^{\boxtimes d} )_{| ( U^d \underset{G^d,s}{\times } G)/\mathfrak{S}_d } = f_* \big (\E^{\boxtimes d}\big )_{| U^d\underset{G^d,s}{\times} G}.
$$
où $$f:  U^d \underset{G^d,s}{\times } G \ldrt  ( U^d \underset{G^d,s}{\times } G)/\mathfrak{S}_d .$$
 On a $(\E^{\boxtimes d})_{U^d}\simeq \underline{\La}$ et l'action de monodromie de $G^d\drt \La^\times$ déduite se factorise via le noyau de $s$. On en déduit que 
 $$
( \E^{\boxtimes d} )_{| U^d\underset{G^d,s}{\times } G} \simeq \underline{\La}.
 $$
 L'action de $\mathfrak{S}^d$ déduite sur $\underline{\La}$ est triviale et 
 $$
( f_* \underline{\La} )^{\mathfrak{S}_d} =\underline{\La}
 $$
 puisque si 
 $$
 V\ldrt (U^d \underset{G^d,s}{\times} G)/\mathfrak{S}_d
 $$ est étale 
 $$
( f_* \underline{\La} )^{\mathfrak{S}_d} = \mathscr{C} ( |V|/\mathfrak{S}_d, \La)
 $$
 et 
 $$
 |V|/\mathfrak{S}_d \iso \big | (U^d \underset{G^d,s}{\times} G)/\mathfrak{S}_d \big |.
 $$
\end{proof}

On peut étendre la construction précédente du cas des coefficients de torsion au cas des $\Qlb$-coefficients. En effet, si $|\Q_\l:\Ql]<+\infty$ alors la construction s'étend automatiquement au cas des $\Z_\l$-systèmes locaux de rang $1$. Cela provient du fait que si $\E$ est un tel $\Z_\l$-système local de rang $1$ alors 
$$
\E= \underset{n\geq 1}{\limp} \E/\ell^n
$$
et $\E/\ell^n$ est un faisceau étale localement constant de rang $1$ sur $\Z_\l/\ell^n\Z_\l$ pour la topologie étale (utiliser le fait que les morphismes étales finis satisfont la descente pour la topologie pro-étale). Maintenant si $\E$ est un $\Q_\l$-système locale de rang $1$ qui n'est pas de la forme $\E_0\otimes_{\Z_\l} \Q_\l$ on peut procéder de la façon suivante. D'après le lemme 8.11 de \cite{ScholzeCohomologyDiamonds} le $\underline{\Q_\l^\times}$-torseur pro-étale des trivialisations de $\E$ est de la forme 
$$
T= T_0\underset{\underline{\Z_\l^\times}}{\times} \underline{\Q_\l^\times} = \coprod_\Z T_0
$$
où $T_0=T/\varpi_\l^\Z$ est pro-étale-fini. Utilisant cela on vérifie la proposition suivante.

\begin{prop}\label{prop: construction du symmetrise sur Qlb}
Soit $\E$ un $\Qlb$ système local de rang $1$ sur $\Div^1_{\Fqb}$. Alors pour $d>0$ le faisceau quasi-pro-étale
$$
\E^{(d)}:= \big ( \Sigma^d_* \E^{\boxtimes d} \big )^{\mathfrak{S}_d}
$$
est un $\Qlb$ système local de rang $1$ sur $\Div^d_{\Fqb}$. 
\end{prop}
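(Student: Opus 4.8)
The plan is to reduce the $\Qlb$-coefficient statement to the torsion case already established, exactly as foreshadowed in the paragraph preceding the proposition. There are two cases to treat. First, suppose $\E$ is of the form $\E_0 \otimes_{\Z_\l} \Qlb$ where $\E_0$ is a $\Z_\l$-system local of rank $1$ (after extending scalars from some finite $\Q_\l \subset \Qlb$). Writing $\E_0 = \varprojlim_n \E_0/\ell^n$, each $\E_0/\ell^n$ is an étale locally constant free rank-$1$ sheaf of $\Z_\l/\ell^n$-modules, so the previous proposition applies and $\big(\Sigma^d_* (\E_0/\ell^n)^{\boxtimes d}\big)^{\mathfrak{S}_d}$ is étale locally constant free of rank $1$ on $\Div^d_{\Fqb}$. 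Since $\Sigma^d_*$ and the $\mathfrak{S}_d$-invariants commute with the inverse limit over $n$ (finite group, nice sheaves), passing to the limit and then tensoring with $\Qlb$ gives the result in this case.

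For the general case, one uses the structure of the $\underline{\Q_\l^\times}$-torseur $T \to \Div^1_{\Fqb}$ of trivializations of $\E$: by Scholze's lemma 8.11, $T = T_0 \times_{\underline{\Z_\l^\times}} \underline{\Q_\l^\times} = \coprod_{\Z} T_0$ with $T_0 = T/\varpi_\l^\Z$ pro-étale-finite. First I would descend $T_0$: since $T_0 \to \Div^1_{\Fqb}$ is pro-étale-finite, it is a cofiltered limit of finite étale $\Z_\l^\times/(1+\ell^m)$-torseurs, to each of which the torsion-coefficient construction above applies; this produces a pro-étale-finite $\Z_\l^\times$-torseur $T_0^{(d)}$ on $\Div^d_{\Fqb}$ together with a $\mathfrak{S}_d$-equivariant identification of its pullback along $(\Div^1_{\Fqb})^d \to \Div^d_{\Fqb}$ with the pushforward of $T_0^{\,d}$ along the sum map. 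Then I would glue in the $\Z$-indexed part: the full torseur $T^{(d)} := T_0^{(d)} \times_{\underline{\Z_\l^\times}} \underline{\Q_\l^\times}$ on $\Div^d_{\Fqb}$ is the torseur of trivializations of $\E^{(d)}$, and unwinding the definitions shows $\E^{(d)} = \big(\Sigma^d_* \E^{\boxtimes d}\big)^{\mathfrak{S}_d}$ is the associated rank-$1$ $\Qlb$-system local, using that $\Sigma^d$ is a quasi-pro-étale epimorphism (prop. \ref{prop: Div d comme quotient par le groupe symetrique}) so descent along it is effective for étale-finite-type data.

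The main obstacle I expect is purely bookkeeping: carefully checking that the monodromy of the pushed-forward $G^d$-torseur along the sum map $s: G^d \to G$ factors through $\ker s$ — this is what makes the $\mathfrak{S}_d$-action on the relevant locally constant sheaf trivial, hence descends — and verifying that this factorization is compatible with the transition maps $G = \Z_\l^\times/(1+\ell^{m+1}) \to \Z_\l^\times/(1+\ell^m)$ and with extension of scalars, so that passing to the limit and adjoining $\varpi_\l^\Z$ is legitimate. The key structural inputs are all already in place: descent of finite étale morphisms along pro-étale covers (\cite{ScholzeCohomologyDiamonds} prop. 7.7), the torseur structure theorem (loc. cit. lemma 8.11), and the presentation $\Div^d = (\Div^1)^d/\mathfrak{S}_d$ as a pro-étale quotient of diamonds. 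No genuinely new geometric difficulty arises beyond the torsion case; the content is organizing the limit and the $\coprod_\Z$ decomposition coherently.
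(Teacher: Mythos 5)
Your proposal is correct and follows essentially the same route as the paper: the paper's own argument is precisely the paragraph preceding the proposition, which reduces to the torsion case via $\E=\varprojlim_n \E/\ell^n$ for $\Z_\l$-lattices and, for general $\E$, invokes lemme 8.11 de \cite{ScholzeCohomologyDiamonds} to write the torseur of trivialisations as $T=T_0\times_{\underline{\Z_\l^\times}}\underline{\Q_\l^\times}$ with $T_0$ pro-\'etale-fini, exactly as you do. Your write-up merely makes explicit the bookkeeping (limits over finite levels, compatibility with the sum map, descent of finite \'etale morphisms along the quasi-pro-\'etale epimorphism $\Sigma^d$) that the paper leaves to the reader.
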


\subsection{Compatibilité au tiré en arrière via l'application d'Abel-Jacobi}

On vérifie maintenant que le diagramme \og
$$
\xymatrix{
\pi_1 (\Div^d_{\Fqb})^{ab} \ar[rr] \ar[d]_-{\pi_1(\text{AJ}^1)} & & \pi_1 (\Div^1_{\Fqb})^{ab} \ar[d]^-{\pi_1(\text{AJ}^d)} \\
\pi_1 (\Pic^1_{\Fqb}) \ar@{=}[r] & E^\times \ar@{=}[r] & \pi_1 (\Pic^d_{\Fqb}) 
} 
$$
\fg{} commute où la flêche horizontale du haut est duale à l'opération de symétrisation d'un $\Qlb$-système local de rang $1$.

\begin{prop}\label{prop: compatibilite tire en arriere}
Pour $\chi:E^\times\drt \Qlb^\times$ et $d>0$ soit $\F_\chi^{(d)}$ le système local associé sur $\mathscr{P}ic^{(d)}=[\spa (\Fqb)/\underline{E}^\times]$.  Notons $\E_{\chi}^{(1)}=\text{AJ}^{1*} \F_{\chi}^{(1)}$.
On a alors 
$$
\E_{\chi}^{(d)} :=
\big (\Sigma^d_* \F_{\chi}^{(1)\boxtimes d}\big )^{\mathfrak{S}_d} = \text{AJ}^{d*} \F_{\chi}^{(d)}.
$$ 
\end{prop}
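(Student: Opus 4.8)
The plan is to reduce the statement to the functoriality of the symmetrization operation under the monoidal structures on $\Div$ and $\Pic$, combined with the fact that $\text{AJ}^1$ is (dually) the reciprocity map computed in Proposition \ref{prop: reciprocite Artin}. First I would unwind the definitions: by construction $\E_\chi^{(1)}=\text{AJ}^{1*}\F_\chi^{(1)}$ is the rank-$1$ local system on $\Div^1_{\Fqb}$ obtained by pushing the $\underline{E}^\times$-torseur $\BB_{\Fqb}^{\ph=\pi}\setminus\{0\}\drt \Div^1_{\Fqb}$ along $\chi$, and $\F_\chi^{(d)}$ on $[\spa(\Fqb)/\underline{E}^\times]$ is the pushforward of the universal torseur along $\chi$ as well. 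The key structural input is that the Abel--Jacobi morphisms are compatible with the monoidal structures: the square
\[
\xymatrix{
(\Div^1)^d \ar[r]^-{\Sigma^d} \ar[d]_-{(\text{AJ}^1)^d} & \Div^d \ar[d]^-{\text{AJ}^d} \\
(\Pic^1)^d \ar[r]^-{+} & \Pic^d
}
\]
commutes (on $S$-points, $\bigotimes_i\O(D_i)=\O(\sum_i D_i)$), and the bottom map, in the presentation $\Pic^k=[\spa(\Fqb)/\underline{E}^\times]$, is induced by the multiplication $(\underline{E}^\times)^d\drt \underline{E}^\times$.

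Next I would compute both sides as pushforwards of torseurs. On the level of the $\underline{E}^\times$-torseur $T\drt \Div^1_{\Fqb}$ of trivializations of $\E_\chi^{(1)}$, the box product $\F_\chi^{(1)\boxtimes d}$ corresponds to the $(\underline{E}^\times)^d$-torseur $T^d\drt (\Div^1_{\Fqb})^d$, and the argument of the Lemme preceding Proposition \ref{prop: construction du symmetrise sur Qlb} shows that $(\Sigma^d_*\F_\chi^{(1)\boxtimes d})^{\mathfrak S_d}$ is the local system attached to the descended $\underline{E}^\times$-torseur $\big(T^d\times_{(\underline{E}^\times)^d,\,s}\underline{E}^\times\big)/\mathfrak S_d$ over $\Div^d_{\Fqb}=(\Div^1_{\Fqb})^d/\mathfrak S_d$, where $s$ is the sum. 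On the other hand, pulling back $\F_\chi^{(d)}$ along $\text{AJ}^d$ and using the commutative square above together with $\text{AJ}^1$-functoriality, $\text{AJ}^{d*}\F_\chi^{(d)}$ is the local system attached to the $\underline{E}^\times$-torseur obtained from the universal torseur on $\Pic^d$ pulled back to $\Div^d$; via Proposition \ref{prop: identidification entre Div1 et Spa divise} and the description of the monoidal structure, this is exactly $(T^d\times_{(\underline{E}^\times)^d,s}\underline{E}^\times)/\mathfrak S_d$, pushed along $\chi$. Comparing the two torseurs gives the identity of local systems.

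I expect the main obstacle to be the careful bookkeeping of the $\mathfrak S_d$-equivariance when passing between the two descriptions of $\Div^d$ (the one as $(\Div^1)^d/\mathfrak S_d$ from Proposition \ref{prop: Div d comme quotient par le groupe symetrique} and the one as $\BB^{\ph=\pi^d}\setminus\{0\}/\underline{E}^\times$), and in particular in checking that the symmetrized torseur really descends to the torseur pulled back from $\Pic^d$ rather than a twist of it. Concretely one must verify that the sum map $s\colon(\underline{E}^\times)^d\drt\underline{E}^\times$ used in the symmetrization is precisely the one induced by the monoidal structure on $\Pic$ under $\text{AJ}$, i.e. that no sign or normalization discrepancy (of the kind that appeared in Proposition \ref{prop: reciprocite Artin}, where an inverse showed up) is introduced. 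Once that compatibility of the torseurs is pinned down, applying $\chi$ and invoking the pushforward description of $\E_\chi^{(d)}$ finishes the proof; everything else is the formal descent machinery for finite étale maps already used in the previous subsection.
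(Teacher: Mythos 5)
Your argument is correct and shares the same skeleton as the paper's proof --- both start from the commutative square relating $\Sigma^d$, $(\text{AJ}^1)^d$, $\text{AJ}^d$ and the tensor-product map $\Pi^d$ on $\Pic$, and both use that $\Pi^d$ is induced by multiplication $(\underline{E}^\times)^d\drt\underline{E}^\times$ --- but the concluding mechanism differs. The paper stays at the level of local systems: it computes $\Pi^{d*}\F_\chi^{(d)}=\F_\chi^{(1)\boxtimes d}$, deduces $\Sigma^{d*}\text{AJ}^{d*}\F_\chi^{(d)}=\E_\chi^{(1)\boxtimes d}$, and then concludes by the adjunction identity $\E=\big(\Sigma^d_*\Sigma^{d*}\E\big)^{\mathfrak{S}_d}$ of Lemma \ref{lemme: caracterisation faisceaux descendus}, which it proves on the spot. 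You instead work at the level of $\underline{E}^\times$-torsors: you describe $\E_\chi^{(d)}$ as the pushout along $\chi$ of the descended torsor $\big(T^d\times_{(\underline{E}^\times)^d,s}\underline{E}^\times\big)/\mathfrak{S}_d$ with $T=\BB^{\ph=\pi}_{\Fqb}\setminus\{0\}$, and $\text{AJ}^{d*}\F_\chi^{(d)}$ as the pushout of $\BB^{\ph=\pi^d}_{\Fqb}\setminus\{0\}\drt\Div^d_{\Fqb}$, and then match the two torsors. What your route buys is a very concrete identification; what it costs is that the matching of torsors is precisely the factorization statement $\big(\BB^{\ph=\pi}_{\Fqb}\setminus\{0\}\big)^d/\underline{\Delta}\rtimes\mathfrak{S}_d\simeq\BB^{\ph=\pi^d}_{\Fqb}\setminus\{0\}$, i.e. Lemme \ref{lemme:morphisme produit diamants} (proved like Proposition \ref{prop: Div d comme quotient par le groupe symetrique}), not Proposition \ref{prop: identidification entre Div1 et Spa divise} which you cite; and the torsor-descent lemma of Section \ref{sec:construction sym} is stated for a finite group $G$, so for $G=E^\times$ you should either invoke the decomposition $T=T_0\times_{\underline{\O_E^\times}}\underline{E}^\times$ used in the paper for $\Qlb$-coefficients or simply note that here the descended torsor is given explicitly by the factorization isomorphism. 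Your worry about a normalization discrepancy in the sum/product map is exactly the point the paper settles by identifying $\Pi^d$ with the multiplication map, so once that is in place no further sign issue arises.
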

\begin{proof}
Il y a un diagramme commutatif 
$$
\xymatrix{
(\Div^1_{\Fqb} )^d \ar[r]^-{\ \Sigma^d \ } \ar[d]_-{(\text{AJ}^1)^d} & \Div^d_{\Fqb} \ar[d]^-{\text{AJ}^d} \\
\big ( \Pic^1\big )^d \ar[r]^-{\Pi^d} & \Pic^d
}
$$
où $\Pi^d (\L_1,\dots,\L_d)=\L_1\otimes\dots\otimes \L_d$. De plus, $\Pi^d$ s'identifie au morphisme
$$
\big ( \BB_{\Fqb}^{\ph=\pi}\setminus \{0\}/\underline{E}^\times\big )^d\ldrt \big ( \BB_{\Fqb}^{\ph=\pi^d}\setminus \{0\}\big ) /\underline{E}^\times
$$
induit par la multiplication de $d$-éléments de $\BB^{\ph=pi}_{\Fqb}$. On en déduit que 
$$
\Pi^{d*} \F_\chi^{(d)} = \F_\chi^{(1)}\boxtimes \dots \boxtimes \F_{\chi}^{(1)}.
$$ 
On obtient alors 
\begin{eqnarray*}
\Sigma^{d*} \text{AJ}^{d*} \F_{\chi}^{(d)} &=& (\text{AJ}^{1})^{d*} \Pi^{d*} \F_\chi^{(d)} \\
&=&  (\text{AJ}^{1})^{d*}     \F_\chi^{(1)\boxtimes d} \\
&=& \big ( \text{AJ}^{1*} \F_\chi^{(1)} \big)^{\boxtimes d}.
\end{eqnarray*}
On peut alors appliquer le lemme qui suit \ref{lemme: caracterisation faisceaux descendus} pour conclure.
\end{proof}

\begin{lemme}\label{lemme: caracterisation faisceaux descendus}
Pour $\E$ un $\Qlb$-système local de rang $1$ sur $Div^d_{\Fqb}$ on a 
$$
\E= \big ( \Sigma^d_* \Sigma^{d*} \E\big )^{\mathfrak{S}_d}.
$$
\end{lemme}
\begin{proof}
Il y a un morphisme naturel obtenu par adjonction
$$
\E\ldrt \big ( \Sigma^d_* \Sigma^{d*} \L\big )^{\mathfrak{S}_d}.
$$
Si $U\drt \Div^d_{\Fqb}$ est un morphisme quasi-pro-étale surjectif  avec $U$ perfectoïde tel que $\E_{|U}\simeq \Qlb$ alors après restriction à $U$  cela se ramène à vérifier que
$$
|U\times_{\Div^d_{\Fqb}} (\Div^1_{\Fqb})^d |/\mathfrak{S}_d\iso |U|
$$
ce qui ne pose pas de problèmes. 
\end{proof}

\subsection{Compatibilité à la structure de monoïde sur $\Div$}

Soit $\E$ un $\Qlb$-système local de rang $1$ sur $\Div^1_{\Fqb}$ et $\E^{(d)}$ le $\Qlb$-système local de rang $1$ sur $\Div^d_{\Fqb}$ obtenu via la proposition \ref{prop: construction du symmetrise sur Qlb}.

\begin{prop}
Le système local $\coprod_{d>0} \E^{(d)}$ sur $\coprod_{d>0} \Div^d_{\Fqb}$ est  compatible à la structure de monoïde abélien sur $\coprod_{d>0} \Div^d_{\Fqb}$ au sens où via $m:\coprod_{d>0} \Div^d_{\Fqb} \times \coprod_{d>0} \Div^d_{\Fqb} \drt \coprod_{d>0} \Div^1_{\Fqb}$ on a un isomorphisme
$$
m^* \coprod_{d>0} \E^{(d)} \xrig{\ \sim\ } \coprod_{d>0} \E^{(d)} \boxtimes \coprod_{d>0} \F^{(d)}
$$
satisfaisant des conditions naturelles de commutativité et associativité.
\end{prop}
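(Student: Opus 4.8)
The plan is to reduce the monoidal statement to a single isomorphism for each pair $d,d'>0$. Writing $\mu_{d,d'}\colon \Div^d_{\Fqb}\times\Div^{d'}_{\Fqb}\drt\Div^{d+d'}_{\Fqb}$ for the addition of divisors, it suffices to produce a canonical isomorphism $\mu_{d,d'}^*\,\E^{(d+d')}\iso \E^{(d)}\boxtimes\E^{(d')}$ of rank-one $\Qlb$-systèmes locaux, and then to verify that the resulting family of isomorphisms is compatible with the symmetry and associativity constraints. To construct the isomorphism I would use, just as in the construction of $\E^{(d)}$ itself, the description of $\Div^\bullet$ as a symmetric power of $\Div^1$ given by Proposition \ref{prop: Div d comme quotient par le groupe symetrique}.

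First I would record the commutative square of diamants
$$
\xymatrix{
(\Div^1_{\Fqb})^{d}\times(\Div^1_{\Fqb})^{d'} \ar@{=}[r] \ar[d]_-{\Sigma^{d}\times\Sigma^{d'}} & (\Div^1_{\Fqb})^{d+d'} \ar[d]^-{\Sigma^{d+d'}} \\
\Div^d_{\Fqb}\times\Div^{d'}_{\Fqb} \ar[r]^-{\mu_{d,d'}} & \Div^{d+d'}_{\Fqb}
}
$$
in which the top arrow is the tautological identification and the vertical arrows are the pro-étale quotients by $\mathfrak{S}_d\times\mathfrak{S}_{d'}\subset\mathfrak{S}_{d+d'}$ and by $\mathfrak{S}_{d+d'}$. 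Pulling the defining formula $\E^{(d+d')}=(\Sigma^{d+d'}_*\E^{\boxtimes(d+d')})^{\mathfrak{S}_{d+d'}}$ back along $\Sigma^{d+d'}$ recovers $\E^{\boxtimes(d+d')}$ together with its canonical $\mathfrak{S}_{d+d'}$-equivariant structure by permutation of factors; restricting that structure to $\mathfrak{S}_d\times\mathfrak{S}_{d'}$ gives exactly the product of the canonical $\mathfrak{S}_d$- and $\mathfrak{S}_{d'}$-equivariant structures, which is also what one obtains by pulling $\E^{(d)}\boxtimes\E^{(d')}$ back along $\Sigma^d\times\Sigma^{d'}$. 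Thus $\mu_{d,d'}^*\,\E^{(d+d')}$ and $\E^{(d)}\boxtimes\E^{(d')}$ pull back to one and the same $\mathfrak{S}_d\times\mathfrak{S}_{d'}$-equivariant sheaf on $(\Div^1_{\Fqb})^{d+d'}$; since $\Div^d_{\Fqb}\times\Div^{d'}_{\Fqb}=(\Div^1_{\Fqb})^{d+d'}/(\mathfrak{S}_d\times\mathfrak{S}_{d'})$ is a pro-étale quotient and finite étale covers descend along pro-étale surjections (\cite{ScholzeCohomologyDiamonds} prop. 7.7), the argument of Lemme \ref{lemme: caracterisation faisceaux descendus} shows that both sheaves equal the $\mathfrak{S}_d\times\mathfrak{S}_{d'}$-invariants of the pushforward of that common sheaf, yielding the desired isomorphism canonically.

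Finally I would check the coherence conditions. The symmetry constraint on $\coprod_{d>0}\Div^d_{\Fqb}$ lifts, at the level of $(\Div^1_{\Fqb})^\bullet$, to the element of $\mathfrak{S}_{d+d'}$ interchanging the first $d$ and the last $d'$ factors, and since $\E^{(d+d')}$ is $\mathfrak{S}_{d+d'}$-invariant by construction the isomorphism above is automatically compatible with this interchange; associativity is analogous, both bracketings producing over $(\Div^1_{\Fqb})^{d+d'+d''}$ the same $\mathfrak{S}_d\times\mathfrak{S}_{d'}\times\mathfrak{S}_{d''}$-equivariant structure on $\E^{\boxtimes(d+d'+d'')}$, namely the product of the canonical ones, and one then descends along $(\Div^1_{\Fqb})^{d+d'+d''}/(\mathfrak{S}_d\times\mathfrak{S}_{d'}\times\mathfrak{S}_{d''})=\Div^d_{\Fqb}\times\Div^{d'}_{\Fqb}\times\Div^{d''}_{\Fqb}$. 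I expect the only genuine difficulty to be bookkeeping: making sure that every identification occurring is the one induced by the tautological equalities $(\Div^1)^d\times(\Div^1)^{d'}=(\Div^1)^{d+d'}$ and by the definition of $\E^{(\bullet)}$ as $\mathfrak{S}$-invariants, so that the equivariant structures match on the nose and not merely up to some unspecified automorphism. Once that is pinned down, the descent along the pro-étale quotients is formal.
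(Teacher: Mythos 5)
Your proposal is correct and follows essentially the same route as the paper: the same commutative square relating $\Sigma^d\times\Sigma^{d'}$, $\Sigma^{d+d'}$ and $m_{d,d'}$, the same adaptation of Lemme \ref{lemme: caracterisation faisceaux descendus} to the quotient by $\mathfrak{S}_d\times\mathfrak{S}_{d'}$, and the same identification of both pullbacks with $\E^{\boxtimes(d+d')}$ before descending. Your extra discussion of the symmetry and associativity constraints is a reasonable elaboration of what the paper leaves implicit.
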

\begin{proof}
Soient $d,d'>0$ et 
$$
m_{d,d'}:\Div_{\Fqb}^d\times \Div_{\Fqb}^{d'}\ldrt \Div_{\Fqb}^{d+d'}
$$
qui s'inscrit dans un diagramme 
$$
\xymatrix{
 (\Div^1_{\Fqb})^{d} \times (\Div^1_{\Fqb})^{d'} \ar@{=}[r]\ar[d]_-{\Sigma^d\times \Sigma^{d'}} & (\Div^{1}_{\Fqb})^{d+d'} \ar[d]^-{\Sigma^{d+d'}} \\
 \Div_{\Fqb}^d\times \Div_{\Fqb}^{d'}\ar[r]^-{m_{d,d'}} & \Div_{\Fqb}^{d+d'}.
}
$$
La preuve du lemme \ref{lemme: caracterisation faisceaux descendus} s'adapte pour montrer que
$$
m_{d,d'}^* \E^{(d+d')} = \big [ (\Sigma^d\times\Sigma^{d'})_* ( \Sigma^d\times\Sigma^{d'})^*
m_{d,d'}^* \E^{(d+d')} \big ]^{\Sigma^d\times \Sigma^{d'}}.
$$
On conclut puisque 
\begin{eqnarray*}
( \Sigma^d\times\Sigma^{d'})^*
m_{d,d'}^* \E^{(d+d')} &=& \Sigma^{(d+d')*} \E^{(d+d')} \\
&=&  \E^{(1)\boxtimes (d+d')}
\end{eqnarray*}
\end{proof}

\section{\'Enoncé du théorème et application au corps de classe}

\subsection{Rappels sur le lemme de Drinfeld-Scholze (\cite{ScholzeBerkeley})}

Nous allons utiliser l'assertion d'unicité dans le lemme de Drinfeld-Scholze (qui est la partie \og facile \fg{} n'utilisant pas le théorème de classification des fibrés sur la courbe). Néanmoins il nous semble bon de réécrire  ce lemme entièrement dans le cadre des objets qui nous intéresse. 
Rappelons donc que d'après Scholze 
$$
\og 
\pi_1 \big ( ( \Div^1_{\Fqb} )^d\big ) = \pi_1 ( \Div^1_{\Fqb})^d\fg{}
$$
où ici un tel $\pi_1$ classifierait des revêtements {\it étales finis}. On a plus précisément le lemme suivant.

\begin{lemme}[Drinfeld-Scholze, cas abélien]
Soit $\La$ un anneau fini, $d>0$ et $\E$ un faisceau de $\La$-modules localement constant libre de rang $1$ sur $(\Div^1_{\Fqb})^d_{\et}$. Il existe alors $\E_1,\dots,\E_d$ de tels faisceaux localement constant sur $\Div^1_{\Fqb}$ tels que 
$$
\E \simeq \E_1\boxtimes \dots \boxtimes \E_d.
$$ 
De plus, $\E$ détermine la classe d'isomorphisme de la collection $(\E_1,\dots,\E_d)$ de façon unique.
\end{lemme}

\begin{rema}
Il faut prendre garde au fait que le lemme de Drinfeld-Scholze ne concerne que les revêtement {\it étales finis} du produit $(\Div^1_{\Fqb})^d$ et non ses revêtements quasi-pro-étales. 
\end{rema}

On en déduit le résultat suivant qui est celui qui nous intéresse.

\begin{lemme}\label{lemme: unicite par application de Drinfeld Scholze}
Soient $\E_1,\E_2$ deux $\Qlb$-systèmes locaux de rang $1$ sur $\Div^1_{\Fqb}$ de la forme 
$\mathscr{G}\otimes_{\overline{\Z}_\ell}\Qlb$ où  $\mathscr{G}$ est un $\overline{\Z}_\ell$-système local.
Si $\E_1^{(d)}\simeq \E_2^{(d)}$ comme systèmes locaux sur $\Div^d_{\Fqb}$ pour un $d>0$ On a alors $\E_1\simeq \E_2$.
\end{lemme}
\begin{proof}
Par application de $\Sigma^{d*}$ on obtient un isomorphisme 
$$
\E_1^{\boxtimes d} \simeq \E_2^{\boxtimes d}.
$$
Le résultat s'en déduit par application de l'unicité dans le lemme de Drinfeld-Scholze. 
\end{proof}

\subsection{Corps de classe géométrique}

Soit  $$\ph:W_E\ldrt \Qlb^\times$$ un caractère dont on veut montrer qu'il se factorise via 
\begin{eqnarray*}
f:W_E^{ab} & \twoheadrightarrow & E^\times 
\end{eqnarray*}
donné par l'action sur le module de Tate d'un groupe de Lubin-Tate.
Quitte à remplacer $\ph$ par $\ph\otimes \chi\circ f$ pour un $\chi$ bien choisi on peut supposer que
$$
\ph:W_E\ldrt \overline{\Z}_\ell^\times.
$$

Notons $\E^{(1)}_\ph$ le $\Qlb$-système local de rang $1$ associé sur $\Div^1_{\Fqb}$ (prop. \ref{prop: identification Qlb sys locaux et rep groupe de Weil}). D'après la proposition \ref{prop: reciprocite Artin} il s'agit de voir que 
$$
\E^{(1)}_\ph = \text{AJ}^{1*} \F_\chi^{(1)}
$$
pour un caractère $\chi:E^\times \drt \Qlb^\times$. 
Pour $d>0$ soit 
$$
\E^{(d)}_\ph = \big ( \Sigma^d_* \E_\chi^{(1)\boxtimes d} \big )^{\mathfrak{S}_d}
$$
le $\Qlb$-système local de rang $1$ associé sur $\Div^d_{\Fqb}$. Supposons maintenant démontré que 
pour $d\gg 0$ celui-ci descende le long de $\text{AJ}^d$ en un système local $\F_\chi^{(d)}$
$$
\E^{(d)}_\ph = \text{AJ}^{d*} \F_{\chi}^{(d)}
$$
où $\chi:E^\times \drt \Qlb^\times$. On a alors d'après le résultat le lemme \ref{lemme: unicite par application de Drinfeld Scholze} et la proposition \ref{prop: compatibilite tire en arriere} $\E_\ph^{(1)}=\text{AJ}^{1*}\F_\chi^{(1)}$ ce qui conclut.
\\

Ainsi, le fait que le morphisme $W_E^{ab}\twoheadrightarrow E^\times$ soit un isomorphisme (i.e. l'extension abélienne maximale de $E$ est engendrée par l'extension maximale non-ramifiée et les poins de torsion d'un groupe de Lubin-Tate) découle du théorème suivant.

\begin{theo}\label{theo: theoreme principal de simple connexite}
Pour $d>1$, resp. $d>2$ si $E|\Qp$, le diamant $\BB_{\Fqb}^{\ph=\pi^d}\setminus \{0\}$ est simplement connexe au sens où tout revêtement étale fini de celui-ci possède une section. 
\end{theo}

On va démontrer ce théorème dans les sections qui suivent.

\section{Le cas d'égales caractéristiques}
\label{sec: le cas d egales car}

On suppose dans cette section que $E=\Fq\llparent \pi\rrparent$. On va voir alors que dans ce cas là  de nombreux diamants qui interviennent dans les constructions précédentes sont en fait des espaces perfectoïdes. 

\subsection{Démonstration du théorème \ref{theo: theoreme principal de simple connexite}}

Rappelons que pour tout $S\in \Perf_{\Fq}$,  
$$
Y_S = \DD^*_S = \{0<|\pi|<1\} \subset \A^1_S
$$
une fibration tirviale en disques ouverts épointés de la variable $\pi$ sur $S$. On a donc lorsque 
$(R,R^+)$ est une $\Fq$-algèbre affinoïde perfectoïde 
$$
\O (Y_{R,R^+}) = \Big \{ \sum_{n\in \Z} x_n\pi^n\ |\ x_n \in R,\ \forall \rho\in ]0,1[\ \ \underset{|n|\drt +\infty}{\lim} \|x_n\| \rho^n=0\Big \}
$$
où $\|.\|$ est une norme d'algèbre multiplicative relativement aux puissances définissant la topologie de $R$. On en déduit une bijection pour tout $d>0$
\begin{eqnarray*}
(R^{00})^d &\iso & \BB (R,R^+)^{\ph=\pi^d} \\
(x_0,\dots,x_{d-1}) &\longmapsto & \sum_{i=0}^{d-1} \sum_{k\in \Z} \big [ x_i^{q^{-k}}\big ] \pi^{kd+i}.
\end{eqnarray*}
Soit $\G_d$ le $\O_E$-module formel $\pi$-divisible sur $\Fq$
 donné par 
\begin{eqnarray*}
 \G_d &=& (\widehat{\mathbb{G}}_a )^d\\
 \forall (x_0,\dots,x_{d-1}) \in (\widehat{\mathbb{G}}_a )^d,\ \ 
\pi. (x_0,\dots,x_{d-1}) &=& (F(x_{d-1}),x_0,\dots,x_{d-2}).
\end{eqnarray*}
On peut alors former le $E$-espace vectoriel formel revêtement universel de $\G_d$
$$
\widetilde{\G_d} = \underset{\times \pi}{\limp} \G_d 
$$
qui est isomorphe à $\spf \Big (\Fq \big \llbracket x_0^{1/p^\infty},\dots,x_{d-1}^{1/p^\infty}\big \rrbracket\Big )$. Il y a alors une identification de faisceaux pro-étales en $E$-espaces vectoriels
$$
\widetilde{\G_d} = \BB^{\ph=\pi^d}.
$$
En d'autres termes $\BB^{\ph=\pi^d}$ est représenté par le $\Fq$-espace adique parfait 
$$
t(\widetilde{\G_d})=
\spa \Big ( \Fq \big \llbracket x_0^{1/p^\infty},\dots,x_{d-1}^{1/p^\infty}\big \rrbracket,\Fq \big \llbracket x_0^{1/p^\infty},\dots,x_{d-1}^{1/p^\infty}\big \rrbracket\Big ).
$$
{\it Cet espace n'est pas perfectoïde car non-analytique.} Néanmoins 
\begin{eqnarray*}
\BB^{\ph=\pi^d} \setminus \{0\} &=& \spa \Big ( \Fq \big\llbracket x_0^{1/p^\infty},\dots,x_{d-1}^{1/p^\infty}\big \rrbracket,\Fq \big \llbracket x_0^{1/p^\infty},\dots,x_{d-1}^{1/p^\infty}\big \rrbracket\Big )\setminus V(x_0,\dots,x_{d-1}) \\
&=& \spa \Big ( \Fq \big \llbracket x_0^{1/p^\infty},\dots,x_{d-1}^{1/p^\infty}\big \rrbracket,\Fq \big \llbracket x_0^{1/p^\infty},\dots,x_{d-1}^{1/p^\infty}\big \rrbracket\Big )_a
\end{eqnarray*}
est perfectoïde\footnote{Dans une version non-perfectoïde l'auteur a rencontré pour la première fois ce type d'espaces dans \cite{StrauchBoundaries}}. Plus précisément, 
$$
\spa \Big ( \Fq \big \llbracket x_0^{1/p^\infty},\dots,x_{d-1}^{1/p^\infty}\big \rrbracket,\Fq \big \llbracket x_0^{1/p^\infty},\dots,x_{d-1}^{1/p^\infty}\big \rrbracket\Big )_a = \bigcup_{i=0}^{d-1} D(x_i)
$$
où 
$$
D(x_i)\ldrt \spa \big ( \Fq \llparent x_i^{1/p^\infty}\rrparent \big )
$$
est une boule ouverte perfectoïde sur le corps perfectoïde $\Fq \llparent x_i^{1/p^\infty} \rrparent$. Néanmoins l'espace tout entier n'est pas défini sur un corps perfectoïde. On vérifie en effet que 
$$
\O \Big (  \spa \Big ( \Fq \big \llbracket x_0^{1/p^\infty},\dots,x_{d-1}^{1/p^\infty}\big \rrbracket,\Fq \big \llbracket x_0^{1/p^\infty},\dots,x_{d-1}^{1/p^\infty}\big \rrbracket\Big )_a\Big ) = \Fq \big \llbracket x_0^{1/p^{\infty}},\dots,x_{d-1}^{1/p^\infty}\big \rrbracket 
$$
qui ne possède par d'unité topologiquement nilpotente. 
\\

{\it Venons en à la preuve du théorème principal \ref{theo: theoreme principal de simple connexite}}. On a 
$$
\BB^{\ph=\pi^d}\setminus \{0\} \sim \underset{n\geq 0}{\limp} \spa \Big ( \Fq \big \llbracket x_0^{1/p^n},\dots,x_{d-1}^{1/p^n}\big \rrbracket, \Fq \big \llbracket x_0^{1/p^n},\dots,x_{d-1}^{1/p^n}\big \rrbracket \Big ).
$$
 On en déduit (théorème d'approximation d'Elkik  ou pureté de Scholze couplé au fait que nos espaces sont quasicompacts quasi-séparés) que l'on a une équivalence de catégories
\begin{eqnarray*} &&
2-\underset{n\geq  0}{\limi} \text{\'Etales finis } \big /\ \spa \Big ( \Fqb \big \llbracket x_0^{1/p^n},\dots,x_{d-1}^{1/p^n}\big \rrbracket, \Fqb \big \llbracket x_0^{1/p^n},\dots,x_{d-1}^{1/p^n}\big \rrbracket \Big )_a \\
&\iso &
\text{\'Etale fini }\big / \ \BB^{\ph=\pi^d}_{\Fqb}\setminus \{0\}.
\end{eqnarray*}

Il suffit donc de montrer que tout revêtement étale fini de 
$$
\spa \big ( \Fqb \llbracket x_0,\dots,x_{d-1}\rrbracket , \Fqb \llbracket x_0,\dots,x_{d-1}\rrbracket  \big )_a
$$
possède une section. On utilise pour cela le résultat bien connu suivant de type GAGA.

\begin{prop} 
Soit $A$ un anneau $I$-adique noethérien. Il y a alors une équivalence de catégories 
$$
\text{\'Etales finis }  /\ \spec (A)\setminus V(I)\iso 
\text{\'Etales finis } /\ \spa (A,A)\setminus V(I).
$$
\end{prop}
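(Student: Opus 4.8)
Le plan est de montrer que le foncteur en question --- qui est le tir\'e en arri\`ere des rev\^etements \'etales finis le long du morphisme canonique d'espaces localement et topologiquement annel\'es $\lambda:\spa(A,A)\setminus V(I)\ldrt\spec(A)\setminus V(I)$, associant \`a une valuation continue son support (lequel \'evite $V(I)$ par d\'efinition du but) muni des morphismes tautologiques sur les anneaux de sections --- est une \'equivalence, et ce par d\'evissage \`a une comparaison purement alg\'ebrique de type GAGA--Elkik.

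On se ram\`enerait d'abord au cas o\`u $A$ est $I$-adiquement complet : la source est inchang\'ee car $\spa(A,A)=\spa(\widehat{A},\widehat{A})$, et le but aussi car $(A,I)$ et $(\widehat{A},I\widehat{A})$ ont m\^eme hens\'elis\'e le long de $I$ et le th\'eor\`eme d'approximation d'Elkik, pris ici comme bo\^ite noire, assure que la cat\'egorie des rev\^etements \'etales finis de $\spec(A)\setminus V(I)$ ne change pas par compl\'etion $I$-adique. Choisissant ensuite des g\'en\'erateurs $f_1,\dots,f_n$ de $I$, on \'ecrit $\spec(A)\setminus V(I)=\bigcup_i D(f_i)$ avec $D(f_i)=\spec(A[1/f_i])$, tandis que $\spa(A,A)\setminus V(I)$ est recouvert par les ouverts rationnels affino\"ides $\mathcal U_i=\{x:|f_j(x)|\le|f_i(x)|\neq 0\ \forall j\}$, dont l'anneau de fonctions est le compl\'et\'e $f_i$-adique de $A[f_\bullet/f_i]$ dans lequel on inverse $f_i$. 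Comme les rev\^etements \'etales finis forment un champ pour la topologie de Zariski (resp. pour la topologie analytique des espaces adiques, par descente des morphismes \'etales finis) et que $\lambda$ respecte ces recouvrements apr\`es raffinement, il suffit de traiter chaque morceau affine ou affino\"ide ainsi que les intersections multiples. Puisque les rev\^etements \'etales finis d'un affino\"ide de Tate noeth\'erien $\spa(B,B^+)$ sont exactement les $B$-alg\`ebres finies \'etales, on est ainsi ramen\'e \`a l'\'enonc\'e alg\'ebrique : pour $C$ noeth\'erien et $f\in C$, le morphisme de compl\'etion $f$-adique $C[1/f]\to\widehat{C}_f[1/f]$ induit une \'equivalence entre cat\'egories de rev\^etements \'etales finis.

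Cet \'enonc\'e est le c\oe{}ur de l'affaire et constitue l'ingr\'edient \og GAGA bien connu \fg{} : la compl\'etion $f$-adique ne change pas la cat\'egorie des rev\^etements \'etales finis du lieu o\`u $f$ est inversible. La pleine fid\'elit\'e r\'esulterait de la (fid\`ele) platitude de $C[1/f]\to\widehat{C}_f[1/f]$ et du fait qu'un morphisme de rev\^etements \'etales finis est un morphisme des $\mathcal O$-alg\`ebres localement libres correspondantes, donc d\'etermin\'e par son action sur les sections globales, calcul\'ees par le m\^eme complexe de \v{C}ech des deux c\^ot\'es. La surjectivit\'e essentielle serait l'\'etape d'alg\'ebrisation : une $\mathcal O$-alg\`ebre finie localement libre $\mathcal B$ en bas s'alg\'ebrise d'abord comme faisceau coh\'erent (GAGA coh\'erent dans le cadre noeth\'erien/formel), puis sa multiplication s'alg\'ebrise par pleine fid\'elit\'e, et la $C[1/f]$-alg\`ebre finie obtenue est \'etale car l'\'etalitude \'equivaut \`a l'inversibilit\'e d'un discriminant, fonction qui est une unit\'e sur $\spec(C[1/f])$ si et seulement si elle l'est sur $\spec(\widehat{C}_f[1/f])$, ces deux espaces ayant \og les m\^emes \fg{} points \`a valeurs dans un corps, dans les deux caract\'eristiques r\'esiduelles, gr\^ace au Nullstellensatz pour l'anneau $f$-adique noeth\'erien. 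On recollerait enfin ces \'equivalences le long du recouvrement, le recollement sur les intersections \'etant automatique, les rev\^etements \'etales finis formant un champ.

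L'obstacle principal serait pr\'ecis\'ement cette alg\'ebrisation \og \`a travers le trou $V(I)$ \fg{} : $\spec(A)\setminus V(I)$ et $\spa(A,A)\setminus V(I)$ n'\'etant ni affines ni affino\"ides, on ne peut comparer na\"ivement leurs anneaux de fonctions, et il faut soit un th\'eor\`eme de type GAGA pour les faisceaux coh\'erents sur ces ouverts (du genre de ceux d\'egag\'es par L\"utkebohmert en g\'eom\'etrie rigide, disponibles dans le cadre noeth\'erien), soit un appel direct au th\'eor\`eme d'Elkik sous la forme : pour le couple hens\'elien $(\widehat{A},I)$, tout rev\^etement \'etale fini du compl\'ementaire de $V(I)$ provient d'un mod\`ele de type fini. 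Cet ingr\'edient acquis, les v\'erifications restantes --- que l'objet alg\'ebris\'e est une alg\`ebre finie plate, puis \'etale --- ainsi que la descente le long du recouvrement par les $D(f_i)$, sont de routine.
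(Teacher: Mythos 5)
You have a genuine gap, and it sits exactly at the step you call the heart of the matter. Your key algebraic claim --- for $C$ noetherian and $f\in C$, the functor $\text{rev\^etements \'etales finis}/\spec(C[1/f])\drt \text{rev\^etements \'etales finis}/\spec(\widehat{C}_f[1/f])$ is an equivalence --- is false in that generality: it requires $(C,fC)$ to be a henselian pair (Elkik, Gabber--Ramero). Take $C=\Q[x]_{(x)}$, $f=x$, so $C[1/f]=\Q(x)$ and $\widehat{C}_f[1/f]=\Q((x))$: the quadratic covers $v^2=2+x$ and $v^2=2$ are non-isomorphic over $\Q(x)$ but become isomorphic over $\Q((x))$ (since $1+x/2$ is a square there). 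This also shows that your full-faithfulness argument cannot work: $\Q(x)\drt\Q((x))$ is faithfully flat, yet fullness fails --- flatness controls sections, not the appearance of new isomorphisms. Worse, in your intended application $C=A[f_1/f_i,\dots,f_n/f_i]$ is a blow-up chart, a finite type $A$-algebra which is neither complete nor henselian along $f_i$ (only $A$ itself is); e.g.\ for $A=k[[s,t]]$ and $C=A[t/s]$ the pair $(C,sC)$ is not henselian ($1+t^2/s$ has a square root mod $s$ but none in $C$). So even the corrected lemma does not apply chart by chart, and the chartwise comparison is essentially as deep as the proposition itself. Finally, the appeal to a coherent GAGA for essential surjectivity has no content at this level: $\spec(C[1/f])$ and $\spec(\widehat{C}_f[1/f])$ are both affine, there is no proper morphism in sight, and descending a finite \'etale algebra along $C[1/f]\drt\widehat{C}_f[1/f]$ is precisely the point to be proved, not a black box.

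This is where the paper's route differs in an essential way and avoids the trap. It does not complete chart by chart: it reduces to a GAGA statement for coherent sheaves/vector bundles, uses Raynaud's description of $\spa(A,A)\setminus V(I)$ as the inverse limit of the $I$-adic completions of the $I$-admissible blow-ups $\widetilde{\X}\drt\X=\spf(A)$, extends a vector bundle from the analytic locus to such a formal admissible blow-up (Raynaud, flattening by blow-ups), and then algebraizes by the formal GAGA/existence theorem of EGA III. The crucial point is that the admissible blow-up of $\spec(A)$ is \emph{proper} over the complete noetherian base $\spec(A)$; properness over the complete $A$ is the global input that plays the role of the henselian property your local charts lack. (A secondary issue in your write-up: $\lambda^{-1}(D(f_i))$ strictly contains $\mathcal{U}_i$, so the two coverings do not match under $\lambda$ and the gluing of chartwise equivalences is not automatic; this could be repaired by a refinement argument, but only once the chartwise statement itself is available, which is the real obstruction.)
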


Il suffit de montrer un tel théorème du type GAGA pour les faisceaux cohérents et les fibrés vectoriels. Pour cela une première méthode consiste à utiliser le point de vue de Raynaud (couplé à son résultat de platification par éclatements dans le cas des faisceaux cohérents, \cite{Ray2} sec. 5.4, qui est plus simple que le cas général). En effet, du point de vue de Raynaud 
$$
\og \spa (A,A)\setminus V(I) = \underset{\widetilde{\X}\drt \X}{\limp} \widetilde{\X} \fg{}
$$
où $\widetilde{\X}\drt \X$ parcourt les complétés $I$-adiques des éclatements $I$-admissibles de $\spec (A)$, $\X=\spf (A)$. D'après Raynaud tout fibré vectoriel  sur $\spa (A,A)_a$ s'étend en un fibré vectoriel sur un tel éclatement formel admissible
$\widetilde{\X}\drt \X$. On peut alors utiliser le théorème GAGF de \cite{EGAIII1} (sec. 5.7) pour algébriser un tel fibré vectoriel. On renvoie à \cite{AbbesElements} (en particulier la section 5.7)  pour plus de détails sur ce point de vue (et bien plus). 
\\

Afin de conclure la preuve de \ref{theo: theoreme principal de simple connexite} on utilise le {\it théorème de pureté de Zariski-Nagata (\cite{NagataPurity}, \cite{SGA2} exp.X théo. 5.4)} qui implique que pour $d>1$ on a une équivalence de catégories
\begin{eqnarray*} && \text{\'Etales finis } \big / \ \spec \big (\Fqb\llbracket x_0,\dots,x_{d-1}\rrbracket\big ) \\
&\iso &
\text{\'Etales finis } \big / \ \spec \big  (\Fqb\llbracket x_0,\dots,x_{d-1}\rrbracket \big )\setminus V(x_0,\dots,x_{d-1}). \\  
\end{eqnarray*}

On conclut en appliquant le lemme de Hensel qui nous dit que $\spec \big  (\Fqb\llbracket x_0,\dots,x_{d-1}\rrbracket \big )$ est simplement connexe.

\begin{rema}
Soit $C|\Fq$ un corps perfectoïde algébriquement clos. Alors 
$$
(\BB^{\ph=\pi^d}\setminus \{0\})_C = \mathring{\BBB}^{d,1/p^\infty}_C\setminus \{(0,\dots,0 ) \}
$$ 
une boule ouverte perfectoïde épointée en l'origine sur $C$. Cet espace n'est pas simplement connexe. Cela montre la nécessité de travailler dans un cadre absolu en particulier dans la section \ref{sec: le cas d inegales} qui suit.
\end{rema}

\subsection{Interprétation joaillère des fonctions symétriques élémentaires}

On n'utilise pas le lemme suivant dans ce texte. Néanmoins il nous parait intéressant de le noter.


\begin{lemme} \label{lemme: iso remarquable egales}
Il y a un isomorphisme de faisceaux pro-étales
$$
\spa (E)^d/\mathfrak{S}_d \iso \spa (\O_E)^{d-1} \times \spa (E).
$$
\end{lemme}
\begin{proof}
Le diamant $\spa (E)^d$ représente le foncteur
$$
(R,R^+)\longmapsto (R^{00}\cap R^\times )^d
$$
sur les $\Fq$-algèbres affinoïdes perfectoïdes. Le faisceau $\spa (\O_E)^{d-1}\times \spa (E)$ est quant à lui donné par
$$
(R,R^+) \longmapsto (R^{00})^{d-1}\times (R^{00}\cap R^\times).
$$
Il y a alors une application fonctorielle $\mathfrak{S}_d$-invariante
\begin{eqnarray*}
\spa (E)^d &\ldrt & \spa (\O_E)^{d-1} \times \spa (E) \\
(x_1,\dots,x_d) & \longmapsto & ( \s_1(x_1,\dots,x_d),\dots,\s_d (x_1,\dots,x_d))
\end{eqnarray*}
où $(\s_i)_i$ sont les fonctions symétriques élémentaires. Pour $S\in \Perf_{\Fq}$ cela correspond à un morphisme de $S$-espaces perfectoïdes
$$
f:(\DD^*_S )^d \ldrt \DD_S^d \times \DD^*_S
$$
où $\DD$ désigne un disque ouvert et $\DD^*$ un disque ouvert épointé. On vérifie que si $S=\spa (C,C^+)$ avec $(C,C^+)$ un corps affinoïde perfectoïde algébriquement clos alors l'application déduite
$$
\DD^* (C)^d \ldrt \DD^d (C)\times \DD^*(C)
$$
est surjective de fibres les $\mathfrak{S}_d$-orbites. On vérifie de plus que $f$ est quasicompact. Il en résulte que pour tout $S$, $f$ est quasi-profini et est un épimorphisme pro-étale. De la même façon le morphisme 
$$
(\DD^*_S)\times \mathfrak{S}_d \ldrt (\DD^*_S)^d\times_{\DD_S^d\times \DD^*_S} (\DD^*_S)^d
$$
est quasi-pro-étale surjectif. On en déduit le résultat.
\end{proof}

On en déduit que $\Div^d$ est un quotient pro-étale de $(\spa (\O_E)^\diamond)^{d-1}\times \spa (E)^\diamond$. On ignore si ce résultat pourrait être utile.


\section{Le cas d'inégales caractéristiques}
\label{sec: le cas d inegales}

\subsection{Diamants absolus et diamants: une technique de descente}
\label{sec: diamants absolus}

On prend la définition suivante.

\begin{defi}
Un espace perfectoïde, resp. diamant, absolu est un faisceau pro-étale $X$ sur $\spa (\Fq)$ tel que pour $S\in \Perf_{\Fq}$ le tiré en arrière $X_S$ soit représentable par un espace perfectoïde, resp. un diamant.
\end{defi}

Voici quelques exemples:
\begin{enumerate}
\item 
Bien sûr $\spa (\Fq)$, l'objet final non-représentable du topos pro-étale, est un espace perfectoïde absolu. 
\item 
Plus généralement soit $\X$ un schéma formel noethérien formellement de type fini sur $\spf (\Fq)$ i.e. $\X_{red}$ est de type fini. Alors $\X$ est un espace perfectoïde absolu qui n'est pas un espace perfectoïde. On a par exemple si $\X=\spf ( \Fq \llbracket x_1,\dots,x_d\rrbracket )$, 
$$
\X_S = \mathring{\BBB}^{d,1/p^\infty}_S
$$
la boule ouverte perfectoïde sur $S$ de dimension $d$. Plus généralement si $\X= \spf ( \Fq \llbracket x_1,\dots,x_d\rrbracket /I)$, $\X_S=V(I)\subset \mathring{\BB}^{d,1/p^\infty}_S$ comme sous-espace perfectoïde Zariski fermé.
\item Le faisceau pro-étale 
$$
(R,R^+)\longmapsto R^+
$$
est un espace perfectoïde absolu représenté au dessus de $S$ par la boule fermée perfectoïde de dimension $1$ sur $S$. Il n'est pas associé à un schéma formel sur $\Fq$.
\item  Pour $d>0$, $\BB^{\ph=\pi^d}$ est un diamant absolu et même un espace perfectoïde absolu associé à un schéma formel lorsque $E=\Fq\llparent \pi\rrparent$. Lorsque $E|\Qp$ et $d>1$ ce diamant absolu n'est pas un espace perfectoïde absolu.
\end{enumerate}

Notons 
$$
F=\Fq\llparent T^{1/p^\infty}\rrparent.
$$
Le lemme suivant est la remarque clef qui permet d'analyser les objets absolus à partir des objets usuels.
Par \og topologie analytique\fg{} on entend la topologie  usuelle de l'espace adique.

\begin{lemme}
Le morphisme $\spa (F)\drt \spa (\Fq)$ est un épimorphisme sur le gros site analytique perfectoïde. 
\end{lemme}
\begin{proof}
C'est une simple conséquence du fait que par définition toute algèbre perfectoïde possède une pseudo-uniformisante.
\end{proof}

On en déduit aussitôt la proposition suivante.

\begin{prop}\label{prop: descente des espaces absolus}
La catégorie des espaces perfectoïdes, resp. diamants, absolus est équivalente à la catégorie des espaces perfectoïdes, resp. diamants, sur $\spa (F)$ munis d'une donnée de descente relativement au diagramme
$$
\xymatrix@C=5mm{
\spa (F)\times \spa (F)\times \spa (F) \ar@<.8ex>[r] \ar[r]\ar@<-.8ex>[r] & \spa (F)\times \spa (F) \ar@<.8ex>[r]\ar@<-.8ex>[r] & \spa (F).
}
$$
\end{prop}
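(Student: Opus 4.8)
The plan is to read this as a descent statement whose only non-formal ingredient is that ``being a perfectoid space, resp. a diamond'' is a $v$-local property. An absolute perfectoid space (resp. diamond) $X$ is in particular a pro-\'etale sheaf (in fact a $v$-sheaf) on $\Perf_{\Fq}$, and the preceding lemma says exactly that $\spa(F)\drt\spa(\Fq)$ is a cover of the final object of the ambient topos. Before anything else I would record the point that makes the statement type-check: the \v{C}ech products $\spa(F)\times_{\spa(\Fq)}\spa(F)$ and $\spa(F)\times_{\spa(\Fq)}\spa(F)\times_{\spa(\Fq)}\spa(F)$ are representable by perfectoid spaces over $\spa(F)$ --- a finite product of $\Fq$-perfectoid spaces, representable by a standard construction (explicitly so in equal characteristic, where it is a variant of the spaces of section \ref{sec: le cas d egales car}). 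In particular they lie in $\Perf_{\Fq}$, so that for any absolute object the pullbacks of $X$ to them are representable and the target category --- perfectoid spaces, resp. diamonds, over $\spa(F)$ equipped with a descent datum relative to the \v{C}ech nerve --- is well defined.

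I would then exhibit the comparison functor $X\longmapsto\big(X_{\spa(F)},\text{canonical descent datum}\big)$, the descent datum and the cocycle identity being automatic from the sheaf property. Full faithfulness is pure descent of morphisms of sheaves along the cover $\spa(F)\drt\spa(\Fq)$: for absolute $X,Y$ the set $\Hom(X,Y)$ is the equalizer of
$$\Hom\big(X_{\spa(F)},Y_{\spa(F)}\big)\rightrightarrows\Hom\big(X_{\spa(F)\times_{\spa(\Fq)}\spa(F)},Y_{\spa(F)\times_{\spa(\Fq)}\spa(F)}\big),$$
i.e. the set of morphisms $X_{\spa(F)}\drt Y_{\spa(F)}$ over $\spa(F)$ compatible with the descent data. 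This part is formal.

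The step with content is essential surjectivity. Starting from $(Z,\ph)$, a perfectoid space (resp. diamond) over $\spa(F)$ with a descent datum, I would first descend it merely as a sheaf: descent of sheaves along a cover of the final object is effective, producing a sheaf $X$ on $\Perf_{\Fq}$, unique up to isomorphism, with $X_{\spa(F)}\simeq Z$ compatibly with $\ph$. It remains to see that $X$ is absolute, i.e. that $X_S$ is representable by a perfectoid space (resp. a diamond) for each $S\in\Perf_{\Fq}$. Writing $S_F:=S\times_{\spa(\Fq)}\spa(F)$, the morphism $S_F\drt S$ is again a cover (epimorphisms are stable under base change), it is representable, the induced morphism $X_S\times_S S_F\drt X_S$ is a cover of $X_S$, and
$$X_S\times_S S_F\;=\;X_{S_F}\;=\;Z\times_{\spa(F)}S_F$$
is representable by a perfectoid space (resp. a diamond), fibre products over a perfectoid base being representable in these categories; similarly for the self-products over $X_S$. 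The main obstacle --- and the only point at which one leaves soft sheaf-theoretic descent --- is to pass from this $v$-local representability back to representability of $X_S$ itself: this is exactly the assertion that perfectoid spaces, resp. diamonds, satisfy $v$-descent (Scholze, \cite{ScholzeCohomologyDiamonds}). Granting it, $X_S$ is representable and the argument is complete. I expect no real difficulty beyond locating the cleanest form of this last descent statement and of the representability of the \v{C}ech products.
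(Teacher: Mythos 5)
Your overall architecture (formal full faithfulness and sheaf descent, with representability of the descended $X_S$ as the only real content) is the right one, but the step where you discharge that content contains a genuine gap. You descend along the base change $S_F=S\times_{\spa(\Fq)}\spa(F)\drt S$ and then appeal to ``perfectoid spaces, resp.\ diamonds, satisfy $v$-descent'' to pass from representability of $X_{S_F}=Z\times_{\spa(F)}S_F$ to representability of $X_S$. No such theorem applies here: being a perfectoid space is not a pro-\'etale-local (let alone $v$-local) property at all, and for diamonds the available recognition statement in \cite{ScholzeCohomologyDiamonds} requires the surjection to be quasi-pro-\'etale, whereas $S_F\drt S$ is a trivial fibration in punctured perfectoid open unit disks (exactly as the paper remarks about the \v{C}ech maps of the diagram) --- nowhere near quasi-pro-\'etale. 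The paper's own absolute objects show why one cannot transport representability along this kind of cover without hypotheses: $\spa(\Fq)$ and $\BB^{\ph=\pi^d}$ (for $E|\Qp$, $d>1$) become representable after pullback to $\spa(F)$, indeed after pullback to every $S\in\Perf_{\Fq}$, and yet are not perfectoid spaces, resp.\ diamonds; this failure of ``representable after a $v$-cover $\Rightarrow$ representable'' is precisely what the word \emph{absolu} is recording.

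What your argument does not use is the actual content of the preceding lemma: $\spa(F)\drt\spa(\Fq)$ is an epimorphism for the big \emph{analytic} topology, i.e.\ every $S$ admits an open cover $\{S_i\}$ together with morphisms $u_i\colon S_i\drt\spa(F)$ (send $T$ to a pseudo-uniformizer). This is what makes the deduction immediate: $X_{S_i}\simeq Z\times_{\spa(F),u_i}S_i$ is representable, the pair $(u_i,u_j)$ defines $S_i\cap S_j\drt\spa(F)\times\spa(F)$ along which the descent datum gives gluing isomorphisms, the cocycle identity on triple overlaps (via the triple product) makes them compatible, and $X_S$ is then obtained by gluing perfectoid spaces, resp.\ diamonds, along open immersions --- a form of descent that is genuinely available, with no appeal to pro-\'etale or $v$-locality of representability. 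With that replacement (and your formal points about representability of the \v{C}ech products and full faithfulness, which are fine) your proof becomes the paper's.
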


On remarquera que ce diagramme de descente est particulièrement agréable puisque {\it chacune des
flêches de ce diagramme est une fibration triviale en disque ouvert perfectoïdes épointés}. Cela provient du fait que le morphisme $\spa (F)\drt \spa (\Fq)$ \og est lisse\fg{}. Une autre remarque est que {\it l'on peut remplacer $\spa (F)$ dans la proposition précédente par $\spa (F)/\ph^\Z$}. 
Donnons un exemple d'application de ceci (nous n'utiliserons pas ce résultat dans la suite). Si $X$ est un diamant absolu par définition un revêtement étale fini de $X$ est un morphisme de faisceaux pro-étales $U\drt X$ tel que pour tout $S$ perfectoïde $U_S\drt X_S$ soit un morphisme étale fini surjectif de diamants.

\begin{lemme}
L'espace perfectoïde absolu $\spa (\Fqb )$ est simplement connexe au sens où tout revêtement étale fini possède une section.
\end{lemme}
\begin{proof}
On utilise le lemme de Drinfeld-Scholze (\cite{ScholzeBerkeley}) en égales caractéristiques. D'après celui-ci si $F=\Fqb \llparent T\rrparent$ et $d>0$
$$
\og \pi_1 \big  ( (\spa (F)/\ph^\Z)^d\big )= \pi_1 \big ( \spa (F)/\ph^\Z \big )^d \fg{}.
$$
Soit $X\drt \spa (\Fqb)$  un revêtement étale fini d'espaces perfectoïdes absolus et $A$ le $\Gamma:=\Gal (\overline{F}|F)$-ensemble fini discret associé à $X_F$. Alors $A$ définit un objet cartésien sur le diagramme de topos
$$
\xymatrix@C=5mm{
B(\Gamma\times \Gamma\times \Gamma)\ar@<.8ex>[r] \ar[r]\ar@<-.8ex>[r] & B(\Gamma\times \Gamma) \ar@<.8ex>[r]\ar@<-.8ex>[r] & B\Gamma
}
$$
(topos classifiant des $G$-ensembles). 
On en déduit que l'action de $\Gamma$ sur $A$ est triviale.
\end{proof} 

Voici une autre application du diagramme de descente précédent. En utilisant le théorème de changement de base lisse (\cite{Hu1} théo. 4.5.1) et le fait que les flêches du diagramme de descente de \ref{prop: descente des espaces absolus} sont acycliques on obtient le résultat suivant.

\begin{prop}
Soit $\La$ un anneau fini dans lequel $p$ est inversible et $X$ un $\Fqb$-schéma séparé de type fini. Notons $X_{abs}$ pour l'espace perfectoïde absolu associé à $X$ vu comme faisceau sur le gros site étale perfectoïde. On a alors
$$
H^\bullet_{\et} ( X,\La) \iso H^{\bullet}_{\et} ( X_{abs},\La).
$$
\end{prop}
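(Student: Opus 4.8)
Le plan est de calculer $R\Gamma_\et(X_{abs},\La)$ par descente le long du recouvrement $c:\spa(F)\drt\spa(\Fqb)$, o\`u $F=\Fqb\llparent T^{1/p^\infty}\rrparent$ : c'est la variante sur $\Fqb$ du lemme pr\'ec\'edant la proposition \ref{prop: descente des espaces absolus} (et de sa version \'etale, un \'epimorphisme de faisceaux pour la topologie analytique en \'etant un a fortiori pour la topologie \'etale, plus fine). Comme $\spa(\Fqb)$ est l'objet final du gros site \'etale perfecto\"ide et que $c$ est un \'epimorphisme, la descente de \v{C}ech pour les coefficients constants donne que $R\Gamma_\et(X_{abs},\La)$ est la totalisation, sur le nerf de $c$, des complexes $R\Gamma_\et\big((X_{abs})_{\spa(F)^{\times(n+1)}},\La\big)$ ($n\geq 0$) --- ce qui r\'esulte de la suite spectrale de \v{C}ech du recouvrement $X_{abs}\times_{\spa(\Fqb)}\spa(F)\drt X_{abs}$. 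Chaque $\spa(F)^{\times m}$ est un espace perfecto\"ide affino\"ide \og honn\^ete\fg{} sur $\Fqb$ et, comme remarqu\'e apr\`es la proposition \ref{prop: descente des espaces absolus}, les fl\`eches $\spa(F)^{\times(m+1)}\drt\spa(F)^{\times m}$ de ce diagramme sont des fibrations triviales en disques ouverts perfecto\"ides \'epoint\'es, donc en particulier lisses ; par composition, les projections $\spa(F)^{\times m}\drt\spa(F)$ le sont aussi.

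Il faudra ensuite identifier les termes de cette totalisation. Par construction $(X_{abs})_{\spa(F)^{\times m}}$ est (la perfection de) l'analytifi\'e relatif de $X\times_{\Fqb}\spa(F)^{\times m}$, de type fini sur $\spa(F)^{\times m}$ ; comme $p\in\La^\times$ le Frobenius relatif est un hom\'eomorphisme universel, donc le passage \`a la perfection ne change pas le site \'etale, et le th\'eor\`eme de comparaison usuel entre la cohomologie \'etale des sch\'emas s\'epar\'es de type fini et celle de leurs analytifi\'es ram\`ene l'\'etude \`a $X\times_{\Fqb}\spa(F)^{\times m}$. On appliquera alors le changement de base lisse (\cite{Hu1} th\'eo. 4.5.1) : les projections $\spa(F)^{\times m}\drt\spa(F)$ \'etant lisses, la cohomologie relative $Rg_{m,*}\La$ de $(X_{abs})_{\spa(F)^{\times m}}\drt\spa(F)^{\times m}$ est l'image inverse de celle, $Rg_{1,*}\La$, de $(X_{abs})_{\spa(F)}\drt\spa(F)$. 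Or $X$ est d\'ej\`a d\'efini sur le corps alg\'ebriquement clos $\Fqb$ : par invariance de la cohomologie \'etale par extension de corps s\'eparablement clos (et $p\in\La^\times$), $\Gal(\overline{F}\,|\,F)$ agit trivialement sur $R\Gamma_\et\big((X_{abs})_{\spa(\overline{F})},\La\big)=R\Gamma_\et(X,\La)$, donc $Rg_{1,*}\La$ est le complexe constant $\underline{R\Gamma_\et(X,\La)}$ sur $\spa(F)$. L'objet cosimplicial $n\mapsto R\Gamma_\et\big((X_{abs})_{\spa(F)^{\times(n+1)}},\La\big)$ s'identifiera ainsi \`a $n\mapsto R\Gamma_\et\big(\spa(F)^{\times(n+1)},\La\big)\otimes_\La R\Gamma_\et(X,\La)$, soit au complexe de \v{C}ech de $c$ \`a coefficients $\La$ tensoris\'e sur $\La$ par le complexe fixe $R\Gamma_\et(X,\La)$.

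En prenant la totalisation : les fl\`eches du diagramme de descente \'etant les fibrations lisses (et $\La$-acycliques au sens requis) ci-dessus et $\La$ \'etant constant, la descente de \v{C}ech pour l'\'epimorphisme $c$ --- qui, $\spa(\Fqb)$ \'etant terminal, dit seulement que $\mathrm{Tot}_n\,R\Gamma_\et\big(\spa(F)^{\times(n+1)},\La\big)=\La$ --- donnera $R\Gamma_\et(X_{abs},\La)=R\Gamma_\et(X,\La)\otimes_\La\La=R\Gamma_\et(X,\La)$, et ce compatiblement aux cup-produits, d'o\`u l'\'enonc\'e $H^\bullet_\et(X,\La)\iso H^\bullet_\et(X_{abs},\La)$. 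La partie d\'elicate sera l'\'etape de descente cohomologique du premier paragraphe : il faudra s'assurer que la cohomologie \'etale (du gros site) du faisceau $X_{abs}$, qui n'est ni quasi-compact ni quasi-s\'epar\'e, est bien calcul\'ee par ce nerf de \v{C}ech, et contr\^oler la compatibilit\'e des isomorphismes de changement de base lisse du second paragraphe aux fl\`eches cosimpliciales. Ceci acquis, et en \'ecrivant $X_{abs}$ comme colimite le long du Frobenius des analytifi\'es des $X\times_{\Fqb}\spa(F)^{\times m}$ (ce qui permet d'invoquer les r\'esultats de Huber niveau par niveau), l'argument sera formel.
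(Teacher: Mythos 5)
Your overall route is the same as the paper's: descend along $c:\spa(F)\drt\spa(\Fqb)$ using the diagram of Proposition \ref{prop: descente des espaces absolus}, identify the terms of the \v{C}ech nerve via Huber's smooth base change (\cite{Hu1} th\'eo. 4.5.1) together with the comparison between the scheme $X$ and its perfected analytification, and conclude by totalization. Up to the constancy of $Rg_{1,*}\La$ on $\spa(F)$ --- which is fine if you construct the natural map $\underline{R\Gamma_{\et}(X,\La)}\drt Rg_{1,*}\La$ and check it on geometric stalks using invariance of \'etale cohomology under extension of algebraically closed fields, rather than just saying that Galois acts trivially --- these first two steps match the paper.

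The genuine gap is the final collapse. You assert that $\mathrm{Tot}_n\,R\Gamma_{\et}\big(\spa(F)^{\times(n+1)},\La\big)=\La$ because ``$\spa(\Fqb)$ is terminal''. Cohomological descent only identifies this totalization with $R\Gamma_{\et}$ of the non-representable final object of the big \'etale topos of $\Perf_{\Fqb}$; the statement that this is $\La$ is exactly the case $X=\spec(\Fqb)$ of the proposition you are proving, so at its crucial point your argument is circular. Nor is the collapse a formality: the individual terms are far from $\La$, since $\spa(F)^{\times(n+1)}$ is a product of punctured open perfectoid disks over $F$, whose \'etale cohomology contains the Galois cohomology of $F$ and Kummer-type classes (already $H^1_{\et}(\spa(F)\times\spa(F),\Z/n)\neq 0$ for $n$ prime to $p$), so that the vanishing in the totalization is a genuine cancellation requiring proof. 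This is precisely the input the paper supplies by invoking, besides smooth base change, the acyclicity properties of the arrows of the descent diagram of Proposition \ref{prop: descente des espaces absolus} (trivial fibrations in punctured open perfectoid disks); without an argument of that kind, or an independent proof that the absolute point $\spa(\Fqb)$ is $\La$-acyclic, your proof does not go through.
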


\subsection{Réduction à un théorème de pureté}

En analysant la preuve du théorème \ref{theo: theoreme principal de simple connexite} en égales caractéristiques on se rend compte que le point crucial est l'utilisation du théorème de pureté de Zariski-Nagata. La preuve en inégales caractéristiques de \ref{theo: theoreme principal de simple connexite} repose sur cette observation.

\begin{lemme}\label{lemme:morphisme produit diamants}
Le morphisme de diamants \og produit de $d$ éléments\fg{} 
$$
\big (\BB^{\ph=\pi}_{\Fqb} \setminus \{0\} \big )^d\ldrt \BB^{\ph=\pi^d}_{\Fqb}\setminus \{0\}
$$
est quasi-pro-étale surjectif et induit un isomorphisme de faisceaux pro-étales
$$
(\BB^{\ph=\pi}_{\Fqb} \setminus \{0\} \big )^d\ / \underline{\Delta} \rtimes \mathfrak{S}_d \iso \BB^{\ph=\pi^d}_{\Fqb}\setminus \{0\}
$$
où $\Delta =\{(\l_1,\dots,\l_d)\in (E^\times )^d\ |\ \prod_{i=1}^d \l_i=1\}$ qui agit par multiplication.
\end{lemme}
\begin{proof}
La preuve est identique à celle de la proposition \ref{prop: Div d comme quotient par le groupe symetrique}.
\end{proof}

Le résultat suivant ramène la preuve de \ref{theo: theoreme principal de simple connexite} à un résultat de pureté. 

\begin{lemme}\label{lemme: si s etend alors trivial}
Soit $U\drt \BB^{\ph=\pi^d}_{\Fqb}\setminus \{0\}$ étale fini. S'il s'étend en un morphisme étale fini au dessus du diamant absolu $\BB^{\ph=\pi^d}_{\Fqb}$ il est alors trivial.
\end{lemme}
\begin{proof}
On utilise le morphisme 
$$
f: \big (\BB^{\ph=\pi}_{\Fqb} \setminus \{0\} \big )^d\ldrt \BB^{\ph=\pi^d}_{\Fqb}\setminus \{0\}
$$
introduit dans le lemme \ref{lemme:morphisme produit diamants} précédent. Si $U$ s'étend à $\BB^{\ph=\pi^d}_{\Fqb}$ alors $f^{-1}(U)$ s'étend en un revêtement étale fini de 
$$
\big ( \BB_{\Fqb}^{\ph=\pi} \big )^d \setminus \{ (0,\dots,0)\} \simeq \spa \Big ( \Fqb \big \llbracket x_0^{1/p^\infty},\dots,x_{d-1}^{1/p^\infty} \big \rrbracket ,\Fqb \big \llbracket x_0^{1/p^\infty},\dots,x_{d-1}^{1/p^\infty} \big \rrbracket\Big )_a
$$
dont on sait qu'il est simplement connexe lorsque $d>1$ (cf. section \ref{sec: le cas d egales car}). On a donc 
$$
f^{-1}(U) \simeq \big (\BB^{\ph=\pi}_{\Fqb} \setminus \{0\} \big )^d\ \times A
$$
où $A$ est un ensemble fini. 
Puisque l'espace perfectoïde $\big (\BB^{\ph=\pi}_{\Fqb} \setminus \{0\} \big )^d$ est  connexe 
l'action de $\Delta\rtimes \mathfrak{S}_d$ sur $f^{-1} (U)$ est donnée par une action de
ce groupe sur l'ensemble fini $A$. Remarquons maintenant la chose suivante (ce point là de la démonstration est inspiré de \cite{SGA1} chap. IV Rem. 5.8). 
Notons $G=\Delta \rtimes \Sigma_d$. 
Soit $C|\Fqb$ un corps perfectoïde algébriquement clos. Pour $x\in (\BB (C)^{\ph=\pi}\setminus \{ 0\})^d$ notons $G_x$ le stabilisateur de $x$. On remarque alors que 
$$
\big  <  G_x \big >_{x\in   (\BB (C)^{\ph=\pi}\setminus \{ 0\})^d} = G
$$
(sous-groupe engendré). De plus pour un tel point géométrique $x$
$$
A= f^{-1}(U)_x=
U_{f(x)} = A/G_x.
$$
et donc $G_x$ agit trivialement sur $A$. Le groupe $G$ agit donc trivialement sur $A$. On conclut puisque 
$$
U = f^{-1}(U)/\underline{G} = \BB_{\Fqb}^{\ph=\pi^d}\setminus \{0\}\times A
$$
comme faisceau pro-étale quotient.
\end{proof}

\subsection{Retour au monde réel: dévissage du résultat de pureté à un énoncé de pureté en géométrie rigide}

On veut appliquer le lemme \ref{lemme: si s etend alors trivial} afin de conclure quant à la démonstration de \ref{theo: theoreme principal de simple connexite}. Le problème bien sûr est que $\BB^{\ph=\pi^d}_{\Fqb}$ n'est pas un diamant mais seulement un diamant absolu. On va appliquer la proposition \ref{prop: descente des espaces absolus}. Pour cela la première étape consiste à choisir une présentation \og sympathique\fg{} de l'espace de Banach-Colmez $\BB_F^{\ph=\pi^d}$.
\\

On note $F=\Fqb \llparent T^{1/p^\infty}\rrparent$. 
Choisissons $t_1,\dots,t_d\in \BB (F)^{\ph=\pi}$ tels que pour tout $i\neq j$, $V(t_i)\cap V(t_j)=\emptyset$ comme sections de $\O(1)$ sur la courbe $X_F$. Notons pour $i\in \{1,\dots,d\}$ 
$$
\hat{t}_i = \prod_{j\neq i} t_j.
$$ 
\begin{lemme}\label{lemme: presentation de notre BC}
Il y a une suite exacte de faisceaux pro-étales sur $\spa (F)$ 
$$
0\ldrt \underline{V} \xrig{\ u\ } \big (  \BB_F^{\ph=\pi}\big )^d\xrig{\ v \ } \BB_F^{\ph=\pi^d}\ldrt 0
$$
où 
\begin{eqnarray*}
v( x_1,\dots,x_d) &=&  \sum_{i=1}^d x_i \hat{t}_i \\
V &=& \big  \{(\l_1,\dots,\l_d\in E^d\ \big  | \ \sum_{i=1}^d \l_i=0\big \} \\
u (\l_1,\dots,x_d) &=& (\l_1 t_1,\dots, \l_d t_d).
\end{eqnarray*}
\end{lemme}
\begin{proof}
On travaille sur la courbe $X_F$ (le mieux étant de travailler sur la version algébrique de \cite{Courbe} pour ce genre de problèmes). On vérifie que l'on a une suite exacte de fibrés vectoriels
$$
0\ldrt V\otimes_E \O_{X_F}\ldrt \O_{X_F} (1)^d\ldrt \O_{X_F}(d)\ldrt 0
$$
où les morphismes sont analogues à ceux donnés dans l'énoncé (l'exactitude au milieu se fait en comptant les degrés une fois prouvé que la suite est exacte à ses extrémités). On peut alors appliquer le foncteur $R\tau_*$ où $\tau: \widetilde{(X_F)}_{\text{pro-ét}}\drt \widetilde{\spa (F)}_{\text{pro-ét}}$ (cf. \cite{ArthurBanachColmez}). 
\end{proof}

Le diamant $(\BB^{\ph=\pi}_F)^d$ est un disque perfectoïde ouvert de dimension $d$ sur $F$ et $(\BB^{\ph=\pi}_F)^d\setminus \underline{V}$ en est un ouvert.
On a alors le résultat de pureté suivant.

\begin{prop}\label{prop: extension de B moins Vbar}
Si $d>2$
 il y a une équivalence
$$
\text{\'Etales finis } \big / \  (\BB_F^{\ph=\pi} )^d \iso \text{\'Etales finis } \big / \  (\BB_F^{\ph=\pi} )^d \setminus \underline{V}.
$$
\end{prop}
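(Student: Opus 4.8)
Le principe sera de ramener l'\'enonc\'e \`a un r\'esultat de puret\'e en g\'eom\'etrie rigide \og usuelle \fg{} (th\'eor\`eme \ref{theo: purete en geo rigide}), en transposant la preuve du th\'eor\`eme \ref{theo: theoreme principal de simple connexite} en \'egales caract\'eristiques, o\`u le point d\'ecisif \'etait la puret\'e de Zariski--Nagata. Rappelons d'abord la g\'eom\'etrie en pr\'esence : comme $\BB^{\ph=\pi}_F\simeq \mathring{\BBB}^{1,1/p^\infty}_F$ (identification valable aussi lorsque $E|\Qp$, cf. la section \ref{sec:description de Div1 en termes de debasculements}), l'espace $(\BB^{\ph=\pi}_F)^d$ est un disque ouvert perfecto\"ide de dimension $d$ sur $F$ ; par le lemme \ref{lemme: presentation de notre BC}, $\underline{V}$ est la fibre de $v$ au dessus de $0$, d'o\`u une immersion ferm\'ee $\underline{V}\hookrightarrow (\BB^{\ph=\pi}_F)^d$ (puisque $\BB^{\ph=\pi^d}_F\drt \spa (F)$ est s\'epar\'e), d'image la famille de points $F$-rationnels $u(\l)=(\l_1 t_1,\dots,\l_d t_d)$ pour $\l$ parcourant $V\simeq E^{d-1}$, de sorte que $\underline{V}\simeq \underline{E}^{d-1}$ et que $(\BB^{\ph=\pi}_F)^d\setminus \underline{V}$ est l'ouvert compl\'ementaire de ce ferm\'e.

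La premi\`ere \'etape est d'\'eliminer la perfectisation. Le disque $(\BB^{\ph=\pi}_F)^d = \mathring{\BBB}^{d,1/p^\infty}_F$ est une $\sim$-limite de disques ouverts rigides noeth\'eriens, les morphismes de transition \'etant des Frobenius sur les coordonn\'ees, donc des hom\'eomorphismes universels ; par invariance du petit site \'etale sous hom\'eomorphisme universel --- en \'ecrivant le disque ouvert comme r\'eunion croissante de polydisques ferm\'es affino\"ides --- les cat\'egories de rev\^etements \'etales finis ne changent pas, et de m\^eme pour le compl\'ementaire de $\underline{V}$. Combin\'e au point de vue de Raynaud et au GAGA formel comme dans la section \ref{sec: le cas d egales car}, ceci ram\`ene l'\'enonc\'e \`a une assertion de puret\'e portant sur un mod\`ele rigide --- voire formel noeth\'erien --- de $(\BB^{\ph=\pi}_F)^d$ et le ferm\'e qu'y induit $\underline{V}$.

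La seconde \'etape est le c\oe ur de l'affaire, et c'est l\`a que r\'eside la principale difficult\'e : enlever le ferm\'e induit par $\underline{V}$. L'obstacle est que $\underline{V}$ n'est \emph{pas} un sous-espace analytique ferm\'e du disque perfecto\"ide --- c'est une famille \emph{profinie} de points classiques, qui s'accumule (en l'origine et au bord du disque) --- de sorte que l'on ne peut pas appliquer Zariski--Nagata \`a un niveau fini comme en \'egales caract\'eristiques. Pour y rem\'edier on exploite la suite exacte du lemme \ref{lemme: presentation de notre BC} : $v$ est un $\underline{V}$-torseur et $\underline{V}=\varinjlim_k (\pi^{-k}\underline{\O_E})^{d-1}$ avec $\underline{\O_E}^{d-1}$ profini, ce qui pr\'esente $(\BB^{\ph=\pi}_F)^d$ comme, \`a une torsion pr\`es, pro-(\'etale fini) au dessus d'une pr\'esentation noeth\'erienne convenable de $\BB^{\ph=\pi^d}_F$ ; sur chacun des \'etages finis de cette pr\'esentation le locus correspondant \`a $\underline{V}$ devient un \emph{ferm\'e de Zariski}, auquel le th\'eor\`eme de puret\'e \ref{theo: purete en geo rigide} s'applique. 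C'est le d\'ecompte de la codimension de ce ferm\'e sur les \'etages finis --- et le contr\^ole du passage \`a la limite --- qui impose la condition $d>2$, la perte d'une unit\'e par rapport au seuil $d>1$ d'\'egales caract\'eristiques provenant de la direction suppl\'ementaire dissimul\'ee dans le torseur profini $\underline{\O_E}^{d-1}$. Une fois le rev\^etement \'etendu \'etage par \'etage gr\^ace \`a \ref{theo: purete en geo rigide}, on recolle : la pleine fid\'elit\'e du foncteur de restriction est automatique (les rev\^etements \'etales finis entre espaces perfecto\"ides satisfont \`a la descente pro-\'etale, \cite{ScholzeCohomologyDiamonds} prop.~7.7), et l'essentielle surjectivit\'e r\'esulte de ce qui pr\'ec\`ede.
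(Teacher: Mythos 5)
Votre r\'eduction vis\'ee (se ramener au th\'eor\`eme de puret\'e rigide \ref{theo: purete en geo rigide}) est bien celle du texte, et votre mise en place g\'eom\'etrique est correcte ($(\BB^{\ph=\pi}_F)^d$ est un disque ouvert perfecto\"ide, $\underline{V}$ une famille localement profinie ferm\'ee de points $F$-rationnels). Mais l'\'etape centrale de votre argument repose sur une affirmation qui ne tient pas : vous pr\'esentez $(\BB^{\ph=\pi}_F)^d$, via le $\underline{V}$-torseur $v$ du lemme \ref{lemme: presentation de notre BC}, comme pro-(\'etale fini) au dessus d'une \og pr\'esentation noeth\'erienne \fg{} de $\BB^{\ph=\pi^d}_F$ dont les \'etages finis feraient de $\underline{V}$ un ferm\'e de Zariski. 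Or pour $E|\Qp$ et $d>1$ le diamant absolu $\BB^{\ph=\pi^d}_F$ n'est ni un espace rigide ni un espace perfecto\"ide et n'admet aucune telle pr\'esentation ; $\underline{V}\simeq \underline{E}^{d-1}$ est localement profini mais non profini, donc $v$ ne fournit aucune tour pro-(\'etale finie) d'espaces noeth\'eriens ; et le th\'eor\`eme \ref{theo: purete en geo rigide} traite pr\'ecis\'ement de l'\'elimination d'un ensemble \emph{profini de points rationnels}, pas d'un ferm\'e de Zariski, de sorte que m\^eme l'application vis\'ee est mal cal\'ee. De m\^eme, la justification de $d>2$ par un \og d\'ecompte de codimension sur les \'etages finis \fg{} et une \og direction suppl\'ementaire \fg{} du torseur profini n'est pas un argument : dans la preuve r\'eelle, $d>2$ provient uniquement de l'hypoth\`ese de dimension de \ref{theo: purete en geo rigide} (profondeur $\geq 3$ dans le th\'eor\`eme d'extension de L\"utkebohmert).

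La preuve du texte contourne l'obstacle que vous avez correctement identifi\'e (le fait que $\underline{V}$ ne soit pas un sous-espace analytique ferm\'e) non pas en d\'e-perfectisant en caract\'eristique $p$, mais en d\'ebasculant : on choisit $K|E$ perfecto\"ide avec $K^\flat=F$ contenant les points de torsion d'un groupe de Lubin-Tate $\G$, on identifie $\BB^{\ph=\pi}_F=\widetilde{\G}^\flat_K$, et l'invariance des rev\^etements \'etales finis par basculement ram\`ene tout \`a $\widetilde{\G}^d_K$ en caract\'eristique $0$. L\`a, les projections $f_n:\widetilde{\G}^d_K\to \G^d_K$ donnent une vraie tour pro-(\'etale finie) au dessus d'espaces rigides noeth\'eriens --- celle qui manque \`a votre esquisse --- et l'on \emph{sature} l'ensemble enlev\'e en \'ecrivant $\widetilde{\G}^d_K\setminus\underline{V}=\bigcup_n \widetilde{\G}^d_K\setminus\underline{V+\pi^n T_\pi(\G)^d}$, o\`u $f_n^{-1}(f_n(V))=V+\pi^n T_\pi(\G)^d$ et $f_n(V)$ est profini : chaque ensemble enlev\'e est alors l'image r\'eciproque d'un ensemble profini de points classiques au niveau fini, et le th\'eor\`eme \ref{theo: purete perfectoide par purete classique} s'applique. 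Cette saturation, indispensable pour descendre au niveau fini, est absente de votre texte ; votre premi\`ere \'etape (tour de Frobenius et invariance topologique du site \'etale) ne la remplace pas et vous laisserait de toute fa\c{c}on avec le probl\`eme non trait\'e d'enlever un ensemble profini de points d'une boule rigide sur le corps $F$ de caract\'eristique $p$, ce qui n'est pas ce que fait le texte.
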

\begin{proof}
Fixons un groupe de Lubin-Tate $\G$ sur $\O_E$.
Soit $K|E$ un corps perfectoïde contenant les points de torsion de $\G$ et muni d'une identification $K^\flat =F$. On a alors 
$$
\BB_F^{\ph=\pi} = \widetilde{\G}_K^\flat
$$
où 
\begin{eqnarray} \label{eq: limp lut}
\widetilde{\G}=\underset{\times \pi}{\limp} \G.
\end{eqnarray}
D'après le théorème de pureté de Scholze il s'agit de montrer que 
$$
\text{\'Etales finis } \big / \ \widetilde{\G}_K^d \iso \text{\'Etales finis } \big / \ \widetilde{\G}_K^d \setminus \underline{V}
$$
où $V\subset \widetilde{\G}^d (K)$. Notons 
$$
f_n: \widetilde{\G}_K^d\ldrt \G_K^d
$$
la projection sur l'étage de niveau $n$ dans la puissance $d$-ième de la limite projective (\ref{eq: limp lut}). Ce morphisme $f_n$ est un $\pi^n\underline{T_\pi (\G)^d}$-torseur pro-étale et 
$$
f_n^{-1} ( f_n (V)) = V + \pi^n T_\pi(\G)^d
$$
avec $f_n (V)\subset \G^d (\O_K)$ profini. Puisque 
$$
\widetilde{\G}_K^d \setminus \underline{V} = \bigcup_{n\geq 0} \widetilde{\G}_K^d \setminus \underline{V + \pi^n T_\pi (\G)^d}
$$
il suffit de montrer que pour tout $n$
$$
\text{\'Etales finis } \big / \ \widetilde{\G}_K^d \iso \text{\'Etales finis } \big / \ \widetilde{\G}_K^d \setminus \underline{V+ \pi^n T_\pi(\G)^d}.
$$
Cela se déduit du théorème \ref{theo: purete perfectoide par purete classique}.
\end{proof}

\begin{coro}
Si $d>2$ il y a une équivalence 
$$
\text{\'Etales finis } \big / \ \BB_F^{\ph=\pi^d} \iso \text{\'Etales finis } \big / \ \BB_F^{\ph=\pi^d} \setminus \{0\}.
$$
\end{coro}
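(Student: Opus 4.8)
The plan is to deduce the corollary from Proposition~\ref{prop: extension de B moins Vbar} by pro-étale descent along the $\underline{V}$-torsor $v$ furnished by Lemma~\ref{lemme: presentation de notre BC}. First I would record that the short exact sequence $0\to\underline{V}\xrightarrow{u}(\BB_F^{\ph=\pi})^d\xrightarrow{v}\BB_F^{\ph=\pi^d}\to 0$ of pro-étale sheaves exhibits $v$ as a torsor under the (locally profinite) group $\underline{V}$ for the pro-étale topology: exactness in the middle says exactly that $(\BB_F^{\ph=\pi})^d\times_{\BB_F^{\ph=\pi^d}}(\BB_F^{\ph=\pi})^d\simeq\underline{V}\times(\BB_F^{\ph=\pi})^d$ via $u$ and the projection. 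Moreover $v^{-1}(0)=\ker v=\im u=u(\underline{V})$, which is precisely the discrete closed subset abbreviated $\underline{V}$ in Proposition~\ref{prop: extension de B moins Vbar}; hence $v$ restricts over the open complement to a $\underline{V}$-torsor $v':(\BB_F^{\ph=\pi})^d\setminus\underline{V}\to\BB_F^{\ph=\pi^d}\setminus\{0\}$. In other words $\BB_F^{\ph=\pi^d}=(\BB_F^{\ph=\pi})^d/\underline{V}$ and $\BB_F^{\ph=\pi^d}\setminus\{0\}=\big((\BB_F^{\ph=\pi})^d\setminus\underline{V}\big)/\underline{V}$ as pro-étale sheaf quotients.

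Since finite étale covers of diamonds satisfy pro-étale descent (\cite{ScholzeCohomologyDiamonds} prop.~7.7, used throughout the paper), descent along $v$, resp.\ $v'$, identifies the category of finite étale covers of $\BB_F^{\ph=\pi^d}$, resp.\ of $\BB_F^{\ph=\pi^d}\setminus\{0\}$, with the category of $\underline{V}$-equivariant finite étale covers of $(\BB_F^{\ph=\pi})^d$, resp.\ of $(\BB_F^{\ph=\pi})^d\setminus\underline{V}$, for the translation action of $\underline{V}$ through $u$ (equivariance for the non-compact group $V$ makes sense because, on a finite set over a base, such a group acts through a finite quotient locally on the base, exactly as in the proof of Lemme~\ref{lemme: si s etend alors trivial}). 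Under these identifications the restriction functor of the corollary becomes the restriction functor of Proposition~\ref{prop: extension de B moins Vbar} applied to $\underline{V}$-equivariant objects.

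It then remains to observe that the equivalence of Proposition~\ref{prop: extension de B moins Vbar} is compatible with the $\underline{V}$-action. For $w\in V$, translation by $w$ on $(\BB_F^{\ph=\pi})^d$ preserves $u(\underline{V})$ (since $V+w=V$) and hence preserves the open complement, so restriction intertwines the two $\underline{V}$-actions; by the uniqueness statement in Proposition~\ref{prop: extension de B moins Vbar} the quasi-inverse ``extension'' functor is therefore equivariant as well, so that the equivalence carries $\underline{V}$-equivariant covers to $\underline{V}$-equivariant covers. Passing to $\underline{V}$-equivariant objects on both sides and descending along $v$ and $v'$ yields the asserted equivalence between finite étale covers of $\BB_F^{\ph=\pi^d}$ and of $\BB_F^{\ph=\pi^d}\setminus\{0\}$.

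The real content is Proposition~\ref{prop: extension de B moins Vbar} itself (the purity statement for the open embedding $(\BB_F^{\ph=\pi})^d\setminus\underline{V}\hookrightarrow(\BB_F^{\ph=\pi})^d$), which I am allowed to assume; granting it, the only delicate point above is the bookkeeping of the $\underline{V}$-equivariant structures through the extension functor, and this is taken care of by the uniqueness of the extension, together with the by-now routine fact (cf.\ Lemme~\ref{lemme: si s etend alors trivial}) that descent along a torsor under a locally profinite group is well behaved for finite étale covers.
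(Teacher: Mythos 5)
Your proposal is correct and is essentially the paper's own argument: the paper's proof likewise invokes pro-\'etale descent of finite \'etale morphisms along the quotient by $\underline{V}$ coming from the presentation of Lemme~\ref{lemme: presentation de notre BC}, identifying both categories with $\underline{V}$-equivariant finite \'etale covers of $(\BB_F^{\ph=\pi})^d$, resp.\ $(\BB_F^{\ph=\pi})^d\setminus\underline{V}$, and then applies Proposition~\ref{prop: extension de B moins Vbar}. Your write-up merely makes explicit the equivariance bookkeeping that the paper leaves implicit.
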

\begin{proof}
En effet, puisque les morphismes étales finis satisfont la descente pro-étale, 
 la catégorie des morphismes étales finis vers $\BB_F^{\ph=\pi^d}$, resp. $ \BB_F^{\ph=\pi^d} \setminus \{0\}$, est équivalente à celle des morphismes étales finis $\underline{V}$-équivariants vers 
 $(\BB_F^{\ph=\pi})^d$, resp. $(\BB_F^{\ph=\pi})^d \setminus \underline{V}$.
\end{proof}

Afin de conclure pour appliquer la proposition \ref{prop: descente des espaces absolus} il suffit de prouver le résultat suivant.

\begin{lemme}
Pour $S\in \{\spa (F)\times \spa (F), \spa (F)\times \spa (F)\times \spa (F)\}$ et $d>2$
le foncteur 
$$
\text{\'Etales finis } \big / \ \BB_S^{\ph=\pi^d} \ldrt \text{\'Etales finis } \big / \ \BB_S^{\ph=\pi^d} \setminus \{0\}
$$
est pleinement fidèle.
\end{lemme}
\begin{proof}
On traite le cas $S=\spa (F)\times \spa (F)$, l'autre cas étant identique.  Utilisant  la présentation du lemme \ref{lemme: presentation de notre BC} on se ramène à vérifier que 
$$
\text{\'Etales finis } \big / \ (\BB_S^{\ph=\pi})^d \ldrt  
\text{\'Etales finis } \big / \ (\BB_S^{\ph=\pi})^d \setminus \big ( \underline{V}_F\times_{\spa (\Fqb)}\spa (F) \big )
$$
est pleinement fidèle. Fixons une identification 
$$
(\BB^{\ph=\pi} )^d= \spf ( \Fq\llbracket X_1^{1/p^\infty},\dots,X_d^{1/p^\infty}\rrbracket)
$$
et donc
$$
(\BB^{\ph=\pi}_F )^d= \mathring{\mathbb{B}}^{d,1/p^\infty}_F
$$
une boule ouverte perfectoïde sur $F$. On a alors 
$$
(\BB^{\ph=\pi}_S )^d =
(\BB^{\ph=\pi}_F )^d\times_{\spa (\Fqb)} \spa (F) = \mathring{\mathbb{B}}^{d,1/p^\infty}_F \times_{\spa (F)} \mathring{\BBB}^{1,1/p^\infty}_F.
$$
On a de plus
$$
V\subset \mathring{\BBB}^d_F (F)
$$
comme sous-ensemble localement profini. Fixons des boules fermées $\mathcal{B}_1\subset  \mathring{\BBB}^d_F$ et $\mathcal{B}_2\subset \mathring{\BBB}_F^1$
et soit $V'=V\cap \mathcal{B}_1$. 
 Il suffit alors de montrer que
 $$
 \text{\'Etales finis } \big / \mathcal{B}_1^{1/p^\infty}\times \mathcal{B}_2^{1/p^\infty} \ldrt
  \text{\'Etales finis } \big / (\mathcal{B}_1^{1/p^\infty} \setminus \underline{V'} )\times \mathcal{B}_2^{1/p^\infty} 
 $$
 est pleinement fidèle. D'après le théorème d'approximation d'Elkik on peut enlever les perfectisés i.e. il suffit de montrer que le foncteur
 $$
 \text{\'Etales finis } \big / \mathcal{B}_1\times \mathcal{B}_2\ldrt
  \text{\'Etales finis } \big / (\mathcal{B}_1 \setminus \underline{V'} )\times \mathcal{B}_2 
 $$
 est pleinement fidèle. Cela résulte du même résultat pour les fibrés vectoriels. En effet, d'après Lütkebohmert (\cite{LutkeVektor}) tout fibré vectoriel sur $\mathcal{B}_1\times \mathcal{B}_2$ est trivial. Le résultat se déduit alors d'un résultat de type Hartogs: pour $\rho \in |F|\cap ]0,1[$
 $$
 \O ( \BBB^d \times \BBB^1 ) = \O \big (  \BBB^d\setminus \mathring{\BBB}^d (\rho) \times \BBB^1)
 $$
(cf. le début de la preuve de \ref{theo: purete en geo rigide}). 
\end{proof}

\subsection{Un théorème de pureté en géométrie rigide}
\label{sec: purete en geo rigide}

\begin{theo}\label{theo: purete en geo rigide}
 Soit $X$ un $K$-espace rigide lisse quasicompact quasi-séparé purement de dimension $d>2$ et $Z\subset X(K)$ un sous-ensemble profini. Il existe alors une famille  d'ouverts admissibles quasicompacts $(U_\a)_\a$, $U_\a\subset X\setminus Z$, qui forme un recouvrement admissible de $X\setminus Z$ et 
 tels que pour tout $i$ le foncteur de restriction à $U_\a$ induise une équivalence 
$$
\text{\'Etales finis }\big / \ X \iso \text{\'Etales finis } \big / U_\a.
$$
En particulier,
$$
\text{\'Etales finis }\big / \ X \iso \text{\'Etales finis } \big / X\setminus Z.
$$
\end{theo}

Par lissité de $X$ 
tout point de $X(K)$ possède un voisinage isomorphe à $\BBB^d$ la boule fermée de dimension $d$ sur $K$. On peut donc supposer que $X=\BBB^d$. Par compacité de $Z$ pour tout $\rho \in ]0,1[\cap |K|$ on peut trouver un nombre finie d'élément $x_1,\dots,x_n\in Z$  tels que 
$$
Z\subset \coprod_{i=1}^n \BBB^d (x_i,\rho ) \subset \BBB^d.
$$
 On peut alors trouver $\rho',\rho''\in ]0,1[\cap |K|^{1/\infty}$ tels que 
$$
\rho < \rho' <\rho'' < |x_i-x_j|\text{ si } i\neq j.
$$
Considérons l'ouvert
$$
U= \BBB^d\setminus \coprod_{i=1}^n  \mathring{\BBB} (x_i,\rho')
$$
 où $\mathring{\BBB}$ désigne  une boule ouverte.
Lorsque $\rho$, $\rho'$ et $\rho''$ varient de tels ouverts forment un recouvrement admissible de $X\setminus Z$.
 Il suffit maintenant de montrer que pour tout 
$i$
$$
\text{\'Etales finis }\big /\ 
\BBB (x_i,\rho'') \iso \text{\'Etales finis } \big / \ \BBB (x_i,\rho'' ) \setminus \mathring{\BBB} (x_i,\rho').
$$
C'est exactement ce que dit la proposition suivante (la descente relativement à l'extension des scalaires vis à vis d'une extension de degré fini de $K$ implique  qu'on peut supposer que nos rayons sont dans $|K|$).

\begin{prop}
Pour $\rho\in ]0,1[\cap |K|$  et $d>2$ on a une équivalence de catégories
$$
\text{\'Etales finis } \big / \ \BBB^d \iso \text{\'Etales finis } \big / \  \BBB^d \setminus \mathring{\BBB}^d (\rho).
$$
\end{prop}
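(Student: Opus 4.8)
Posons $A=\BBB^d\setminus\mathring{\BBB}^d(\rho)$, la \og couronne\fg{} de rayons $\rho$ et $1$. On proc\`ede comme dans la preuve de la proposition GAGA pr\'ec\'edente, par d\'evissage aux fibr\'es vectoriels : un rev\^etement \'etale fini de $X$ (avec $X=\BBB^d$ ou $X=A$) est la donn\'ee d'une $\O_X$-alg\`ebre finie \'etale $\mathcal{B}$, laquelle est en particulier un fibr\'e vectoriel. Le foncteur de restriction sera une \'equivalence d\`es que l'on dispose, pour $d>2$, des deux \'enonc\'es suivants : (a) tout fibr\'e vectoriel sur $A$ s'\'etend, de fa\c con unique \`a isomorphisme unique pr\`es, en un fibr\'e vectoriel sur $\BBB^d$ ; (b) (Hartogs) la restriction $\O(\BBB^d)\ldrt\O(A)$ est un isomorphisme. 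En effet, admettant (a) et (b) : pour $\mathcal{E},\mathcal{F}$ des fibr\'es sur $\BBB^d$, le faisceau $\hom(\mathcal{E},\mathcal{F})$ est un fibr\'e sur $\BBB^d$, donc libre d'apr\`es L\"utkebohmert (\cite{LutkeVektor}), et (b) entra\^ine que $\Hom_{\BBB^d}(\mathcal{E},\mathcal{F})\ldrt\Hom_A(\mathcal{E}_{|A},\mathcal{F}_{|A})$ est bijectif. On en d\'eduit la pleine fid\'elit\'e du foncteur de restriction sur les alg\`ebres. Pour l'essentielle surjectivit\'e, partant d'une alg\`ebre finie \'etale $\mathcal{B}_0$ sur $A$, on \'etend son fibr\'e sous-jacent en $\mathcal{B}$ sur $\BBB^d$ par (a), puis la multiplication $\mathcal{B}\otimes\mathcal{B}\ldrt\mathcal{B}$ et l'unit\'e $\O\ldrt\mathcal{B}$ par bijectivit\'e des $\Hom$ ; associativit\'e, commutativit\'e et unitarit\'e sont des \'egalit\'es de morphismes de fibr\'es qui ont lieu sur $A$, donc sur $\BBB^d$ par injectivit\'e des $\Hom$. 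Enfin $\mathcal{B}$ est finie \'etale : en trivialisant $\det\mathcal{B}$ (libre sur $\BBB^d$), le d\'eterminant de la forme trace devient une fonction $\delta\in\O(\BBB^d)$ inversible sur $A$ ; alors $\delta^{-1}\in\O(A)$ se prolonge par Hartogs en $g\in\O(\BBB^d)$, et $\delta g=1$ sur $A$ donc sur $\BBB^d$ puisque $A$ est un ouvert admissible non vide de l'espace r\'eduit connexe $\BBB^d$ ; ainsi $\delta\in\O(\BBB^d)^{\times}$ et $\mathcal{B}$ est finie \'etale, prolongeant $\mathcal{B}_0$.

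Reste \`a \'etablir (a) et (b). Pour (b) : l'espace $A$ est recouvert par les $d$ domaines de Laurent $W_i=\{x\in\BBB^d\ |\ |x_i|\geq\rho\}$, $1\leq i\leq d$, car un point dont toutes les coordonn\'ees v\'erifient $|x_i|<\rho$ est dans $\mathring{\BBB}^d(\rho)$. Une fonction $f\in\O(A)$ admet sur $W_i$ un d\'eveloppement de Laurent convergent $\sum_{n\in\Z}a^{(i)}_n x_i^n$ \`a coefficients dans $\O(\BBB^{d-1})$ en les variables restantes. En comparant, sur $W_i\cap W_j$ pour $i\neq j$ (non vide puisque $d\geq 2$), les d\'eveloppements de $f$ en les deux variables $x_i$ et $x_j$, on voit que pour chaque $i$ les coefficients de degr\'e strictement n\'egatif en $x_i$ sont nuls ; donc $f$ provient d'une s\'erie enti\`ere convergente sur $\BBB^d$. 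L'injectivit\'e de la restriction est claire, $A$ \'etant sch\'ematiquement dense dans $\BBB^d$. Cet \'enonc\'e vaut d\'ej\`a pour $d\geq 2$ (c'est lui que l'on utilise au d\'ebut de la preuve de \ref{theo: purete en geo rigide}) ; c'est dans (a) que l'hypoth\`ese $d>2$ est r\'eellement n\'ecessaire, exactement comme dans le cas complexe o\`u l'extension de Hartogs des fonctions vaut en dimension $\geq 2$ mais celle des fibr\'es vectoriels n\'ecessite la dimension $\geq 3$.

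Pour (a) : chaque $W_i$ est affino\"ide, isomorphe \`a $\BBB^{d-1}\times\{\rho\leq|t|\leq 1\}$, sur lequel tout fibr\'e vectoriel est libre d'apr\`es \cite{LutkeVektor} ; un fibr\'e $\mathcal{E}$ de rang $n$ sur $A$ est donc d\'ecrit par un $1$-cocycle de \v Cech $(g_{ij})$, $g_{ij}\in\GL_n(\O(W_i\cap W_j))$, pour le recouvrement affino\"ide $(W_i)_i$. Le point d\'elicat, et celui que j'attends comme obstacle principal, est de montrer que $\mathcal{E}$ est libre (ou du moins se prolonge) : pour $d>2$ on a $H^1(A,\O)=0$ --- la cohomologie de la couronne s'obtient par d\'eveloppement de Laurent et $H^q(A,\O)=0$ pour $0<q<d-1$, alors que $H^1$ est non nul d\`es $d=2$ --- et, en combinant cette annulation, la trivialit\'e de $\det$ sur $\BBB^d$ (d'o\`u $\mathrm{Pic}(A)=0$) et la libert\'e des fibr\'es sur les $W_i$, un d\'evissage cohomologique (induction sur $n$ par extensions de fibr\'es en droites, la classe vivant dans $H^1(A,\O)$) montre que $\mathcal{E}$ est libre, donc se prolonge trivialement \`a $\BBB^d$ ; l'unicit\'e du prolongement r\'esulte de (b). De fa\c con \'equivalente on peut invoquer directement le th\'eor\`eme d'extension des faisceaux coh\'erents de L\"utkebohmert en g\'eom\'etrie rigide pour une telle couronne lorsque $d>2$. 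Ceci ach\`eve la preuve.
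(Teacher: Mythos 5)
Your full-faithfulness argument coincides with the paper's (lemme \ref{lemme: Hartog}): triviality of vector bundles on $\BBB^d$ by L\"utkebohmert plus the Hartogs isomorphism $\O(\BBB^d)\iso \O\big(\BBB^d\setminus\mathring{\BBB}^d(\rho)\big)$. The gap is in your step (a). You need every vector bundle on $A=\BBB^d\setminus\mathring{\BBB}^d(\rho)$ to extend as a \emph{vector bundle} (indeed to be free), and the justification you give --- an induction on the rank via extensions by line bundles, with obstruction in $H^1(A,\O)$ --- does not work: a rank $n$ bundle on $A$ has no reason to admit a filtration with line-bundle quotients, and the vanishing of the coherent cohomology $H^1(A,\O)$ controls extensions between two \emph{given} bundles, not the non-abelian set $H^1(A,\GL_n)$ that freeness would require. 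Your fallback, invoking L\"utkebohmert's extension theorem ``de fa\c con \'equivalente'', misreads that theorem (th\'eo. \ref{theo: extension faisceaux coherents}): for $d>2$ it extends a vector bundle on $A$ only to a \emph{coherent} sheaf on $\BBB^d$, not to a locally free one, and this distinction is exactly the crux of the matter (compare the remark following it, on Frisch's example in dimension $2$ and Siu's results).

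This is why the paper's proof of essential surjectivity takes a different route: the coherent extension turns a finite \'etale cover of $A$ into a finite, but a priori neither flat nor \'etale, cover of $\BBB^d$; one may take it normal, and then prop. \ref{prop: etale fini sur bord implique etale fini partout} (localization at height-one primes, which meet the Shilov boundary by lemme \ref{lemme: intersection non vide avec le bord}, combined with Zariski--Nagata purity on the normal total space) shows it is \'etale everywhere. Your trace-discriminant argument cannot substitute for this step, because it presupposes that the extended algebra is finite locally free --- which is precisely what is not known. If you could actually establish (a) (freeness, or locally free extension, of bundles on $A$ for $d>2$), the rest of your argument would go through and would indeed be simpler than the paper's; but as written, (a) is unsupported, and it is not a minor technical point: it is the hard content that the paper deliberately circumvents.
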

\begin{proof}
La pleine fidélité résulte du lemme d'Hartogs \ref{lemme: Hartog} qui suit. D'après le théorème \ref{theo: extension faisceaux coherents} de Lütkebohmert tout revêtement étale fini de $\BBB^d\setminus \mathring{\BBB}^d$ s'étend en un revêtement fini de $\BBB^d$ que l'on peut supposer normal (i.e. on dispose d'un \og main théorème de Zariski\fg{} dans ce cadre là, ce qui n'est pas vrai à priori en général en géométrie rigide). On conclut en utilisant la proposition  \ref{prop: etale fini sur bord implique etale fini partout}.
\end{proof}

\begin{lemme}\label{lemme: Hartog} 
Pour $d>1$ le  foncteur de restriction
$$
\text{Fibrés vectoriels sur } \BBB^d\ldrt  \text{Fibrés vectoriels sur } \BBB^d\setminus \mathring{\BBB} (\rho)
$$
est pleinement fidèle. 
\end{lemme}
\begin{proof}
D'après \cite{LutkeVektor} tout fibré vectoriel sur $\BBB^d$  est trivial. Il suffit alors de vérifier que 
$$
\O ( \BBB^d) \iso \O \big ( \BBB^d\setminus \mathring{\BBB} (\rho) \big )
$$
ce qui est immédiat par un calcul.
\end{proof}

\begin{theo}[Lütkebohmert \cite{LutkeHab}] \label{theo: extension faisceaux coherents}
Pour $d>2$ et $\rho \in ]0,1[\cap |K|$ tout fibré vectoriel $\E$ sur $\BBB^d\setminus \mathring{\BBB} (\rho)$ s'étend canoniquement en un faisceau cohérent sur $\BBB^d$ au sens où 
$
\GG \big ( \BBB^d\setminus \mathring{\BBB} (\rho), \E \big ) 
$
est un $\O(\BBB^d)$-module de type fini tel que 
$$
\GG\big  ( \BBB^d\setminus \mathring{\BBB} (\rho), \E \big ) \otimes_{\O(\BBB^d)} \O_{\BBB^d\setminus \mathring{\BBB}(\rho)} \iso \E.
$$
\end{theo}

\begin{rema}
Le théorème précédent de Lütkebohmert est plus général que le cas des fibrés vectoriels, il est vrai en profondeur $\geq 3$. Il s'agit de l'analogue en géométrie rigide de résultats de Siu en géométrie analytique complexe (\cite{SiuThesis}) (cf. également \cite{Frisch}, en particulier pour un exemple de faisceau cohérent sur un disque épointé de dimension $2$ ne s'étendant pas en un faisceau cohérent). L'analogue en géométrie algébrique de ces résultats est dû à Michèle Raynaud (\cite{SGA2}).
\end{rema}

\begin{prop}\label{prop: etale fini sur bord implique etale fini partout}
Soit $U\drt \BBB^d$ avec $d>1$ un morphisme fini avec $U$ normal et $\partial \BBB^d=\BBB^d\setminus \mathring{\BBB}^d$.
  Si $U_{|\partial \BBB^d}$ est étale au dessus de $\partial \BBB^d$ alors $U\drt \BBB^d$ est étale.
\end{prop}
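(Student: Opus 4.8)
On va déduire le résultat du théorème de pureté de Zariski--Nagata, couplé au lemme de Hartogs \ref{lemme: Hartog} ; la structure de l'argument est celle du cas d'égales caractéristiques (section \ref{sec: le cas d egales car}), à ceci près que l'on applique la pureté aux anneaux locaux de l'espace rigide lisse $\BBB^d$ plutôt qu'à un anneau de séries formelles. Quitte à remplacer $U$ par une de ses composantes connexes on suppose $U$ intègre et normal, et -- ce qui est automatique dans les situations d'application, cf. la preuve de la proposition précédente -- $U\drt \BBB^d$ dominant. On écrit $\BBB^d=\mathrm{Sp}(A)$ avec $A=K\langle x_1,\dots,x_d\rangle$ et $U=\mathrm{Sp}(B)$, où $B$ est un anneau intègre normal, fini sur $A$, de corps de fractions une extension finie de $\Frac(A)$.

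\emph{Étape 1 : le lieu de non-étalité $D\subset \BBB^d$ de $U\drt \BBB^d$ est un fermé analytique, vide ou purement de codimension $1$.} C'est un fermé analytique puisque $U\drt \BBB^d$ est fini. L'étalité en un point classique $x$ se testant sur le complété $\widehat{\O}_{\BBB^d,x}$, lequel coïncide avec le complété de l'anneau local algébrique de $\spec(A)$ au point fermé correspondant, et les points classiques étant denses ($A$ est de Jacobson), $D$ n'est autre que le fermé analytique associé au lieu de branchement du morphisme fini dominant de schémas $\spec(B)\drt \spec(A)$. Comme $A$ est régulier ($\BBB^d$ est lisse) et $B$ normal, le théorème de pureté de Zariski--Nagata (\cite{NagataPurity}, \cite{SGA2} exp.~X) donne : ce lieu de branchement est vide ou purement de codimension $1$. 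Supposons $D\neq \emptyset$ ; $\BBB^d$ étant régulier, $D_{\mathrm{red}}$ définit un diviseur de Cartier effectif non nul, qui est principal car tout fibré en droites sur $\BBB^d$ est trivial (\cite{LutkeVektor}). Donc $D=V(f)$ pour un $f\in A$ non inversible.

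\emph{Étape 2 : l'hypothèse force $f$ à être inversible, d'où une contradiction.} Par hypothèse $D\cap \partial \BBB^d=\emptyset$. Or $\partial \BBB^d=\BBB^d\setminus \mathring{\BBB}^d$ est quasi-compact, $D$ aussi (fermé analytique d'un affinoïde), et les $\BBB^d\setminus \mathring{\BBB}^d(\rho)$ forment lorsque $\rho\to 1$ une famille décroissante de fermés quasi-compacts d'intersection $\partial \BBB^d$ ; il existe donc $\rho\in\,]0,1[$ avec $D\subset \mathring{\BBB}^d(\rho)$. Ainsi $f$ ne s'annule pas sur $\BBB^d\setminus \mathring{\BBB}^d(\rho)$, y est inversible, et $1/f\in \O(\BBB^d\setminus \mathring{\BBB}^d(\rho))$. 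Comme $d>1$, l'égalité $\O(\BBB^d)\iso \O(\BBB^d\setminus \mathring{\BBB}^d(\rho))$ établie dans la preuve du lemme \ref{lemme: Hartog} donne $g\in \O(\BBB^d)$ prolongeant $1/f$ ; la relation $fg=1$, valable sur $\BBB^d\setminus \mathring{\BBB}^d(\rho)$, vaut alors sur $\BBB^d$ par injectivité de la restriction. Donc $f\in \O(\BBB^d)^\times$ et $D=V(f)=\emptyset$ : contradiction. On conclut $D=\emptyset$, i.e. $U\drt \BBB^d$ est étale.

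Le point délicat est l'étape $1$ : faire passer la pureté de Zariski--Nagata du monde schématique au monde rigide-analytique, ce qui repose sur le modèle algébrique $\spec(B)\drt \spec(A)$ ($A$ noethérien régulier, $B$ normal fini) et sur l'identification du lieu de branchement algébrique avec le lieu de non-étalité analytique via la densité des points classiques. Cela fait, tout le reste est formel : trivialité du groupe de Picard de $\BBB^d$, quasi-compacité de $\partial \BBB^d$, et lemme de Hartogs en dimension $d>1$.
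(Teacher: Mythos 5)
Votre d\'emonstration est correcte, mais elle n'est pas organis\'ee comme celle du texte et la comparaison m\'erite d'\^etre faite. Les deux preuves reposent sur les deux m\^emes ingr\'edients --- la puret\'e de Zariski--Nagata appliqu\'ee au mod\`ele alg\'ebrique $\spec (B)\drt \spec (A)$, avec $A=\O(\BBB^d)$ r\'egulier et $B=\O(U)$ normal, et un \'enonc\'e de type Hartogs pr\`es du bord --- mais dans l'ordre inverse. Le texte montre d'abord que tout premier $\mathfrak{p}=(f)$ de hauteur $1$ de l'anneau factoriel $A$ \og vient du bord\fg{}: $V(f)$ rencontre $\partial \BBB^d$ (lemme \ref{lemme: intersection non vide avec le bord}), puis, via les points du bord de Shilov de $V(f)$ et un argument de Hartogs sur l'id\'eal $\O_{\BBB^d}f$, il produit un point $x\in (\partial \BBB^d)^{an}$ dont la valuation a pour support exactement $\mathfrak{p}$; l'\'etalit\'e au-dessus du bord se transporte ainsi en $\mathfrak{p}$, et la puret\'e, sous la forme \og \'etale en codimension $1$ implique \'etale\fg{}, conclut. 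Vous appliquez au contraire la puret\'e en premier, sous la forme \og le lieu de branchement est vide ou purement de codimension $1$\fg{}, vous identifiez ce lieu au lieu de non-\'etalit\'e analytique $D$ par densit\'e des points classiques et comparaison des compl\'et\'es des anneaux locaux, vous l'\'ecrivez $D=V(f)$ (trivialit\'e des fibr\'es en droites sur $\BBB^d$, ou simplement factorialit\'e de $A$), puis vous montrez par Hartogs qu'un tel diviseur non vide doit rencontrer $\BBB^d\setminus \mathring{\BBB}^d(\rho)$ pour tout $\rho$ --- vous red\'emontrez ce faisant le lemme \ref{lemme: intersection non vide avec le bord} par une autre voie --- d'o\`u la contradiction. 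Votre variante \'evite l'\'etape la plus d\'elicate du texte (les supports des valuations des points de Shilov), au prix de deux v\'erifications que vous indiquez \`a juste titre: l'identification de $D$ avec l'analytifi\'e du lieu de branchement alg\'ebrique, et l'argument de compacit\'e, \`a mener dans l'espace de Berkovich ou adique comme le fait d'ailleurs le texte, donnant $D\subset \mathring{\BBB}^d(\rho)$. Enfin, vous avez raison d'exiger que chaque composante de $U$ domine $\BBB^d$: cette hypoth\`ese, laiss\'ee implicite dans le texte mais n\'ecessaire \`a la puret\'e (l'exemple $U=\BBB^d\sqcup \{0\}$, fini, normal et \'etale au-dessus de $\partial \BBB^d$ sans \^etre \'etale, met l'\'enonc\'e brut en d\'efaut), est bien satisfaite dans l'application qui est faite de la proposition.
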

\begin{proof}
Soit $\mathfrak{p}=(f)$ un idéal premier de hauteur $1$ dans l'anneau factoriel $A=\O(\BBB^d)$.  D'après le lemme  \ref{lemme: intersection non vide avec le bord} $V(f)\cap \partial \BBB^d\neq \emptyset$. On peut donc trouver des points $x_1,\dots, x_r\in (\partial \BBB^d)^{an}$, l'espace de Berkovich associé à $\partial\BB^d$, tels que si $\mathscr{I} = \O_{\BBB^d} f$, 
$$
\forall g\in \O(\BBB^d),\ g(x_1)=\dots=g(x_r)=0 \ssi g_{|\partial \BBB^d} \in \GG ( \partial \BBB^d, \mathscr{I})
$$
(il suffit de prendre la réunion de tous les points du bord de Shilov de $V(f)\cap U$ lorsque $U$ parcourt un recouvrement affinoïde admissible fini de $\partial \BBB^d$). Puisque $\times f: \O_{\BBB^d}\xrig{\sim} \mathscr{I}$, d'après Hartogs 
$$
\GG ( \partial \BBB^d,\mathscr{I} ) = \O(\BBB^d)f.
$$
Soient alors $\mathfrak{q}_1,\dots,\mathfrak{q}_r\in \spec (A)$ les supports des valuations associées à $x_1,\dots,x_r$. On a donc
$$
\mathfrak{p} = \bigcap_{i=1}^r \mathfrak{q}_i
$$
et donc $\mathfrak{p}=\mathfrak{q}_{i}$ pour un indice $i$. Il existe donc $x\in \partial (\BBB^d)^{an}$  tel que 
$$
A_{\mathfrak{p}}\ldrt \O_{(\BBB^{d})^{an},x}.
$$
Si $B=\O(U)$ le morphisme $\spec (B)\drt \spec (A)$ est étale en $\mathfrak{p}$ puisque c'est le cas après tiré en arrière à $\spec (\O_{(\BBB^{d})^{an},x})$. On conclut en appliquant le théorème de pureté de Zariski-Nagata puisque $B$ est normal.
\end{proof}

\begin{lemme}\label{lemme: intersection non vide avec le bord}
Soit $d>1$ et $f\in \O(\BBB^d)$ qui n'est pas une unité alors $V(f)\cap \partial \BBB^d\neq\emptyset$.
\end{lemme}
\begin{proof}
Par quasi-compacité de $V(f)$, si $V(f)\cap \partial \BBB^d=\emptyset$ il existe  $\rho\in ]0,1[\cap |K|$ tel que 
$$
V(f)  \subset \BBB^d (\rho).
$$
On en déduit que 
$$
\GG ( \BBB^d, \O_{\BBB^d}/ f) \iso \GG ( \BBB^d (\rho), \O_{\BBB^d}/ f).
$$
Puisque cet opérateur de restriction est complètement continu cela implique que $\O (V(f))$ est de dimension finie sur $K$. C'est impossible puisque $\dim V(f)>0$.
\end{proof}

\begin{rema}
Un autre démonstration plus élémentaire est la suivante. On peut supposer que $\| f\|_\infty=1$.
Soit alors $\tilde{f}\in k_K [X_1,\dots,X_d]\setminus \{0\}$ sa réduction. Puisque $\tilde{f}$ n'est pas une unité et $d>1$ il existe $i\in \{1,\dots,d\}$ tel que $\tilde{f}\notin k_K[X_1,\dots,X_d][X_i^{-1}]^\times$. On conclut grâce à cela. Cependant la démonstration moins élémentaire précédente s'applique dans des contextes plus généraux en rapport avec le bord de Shilov des affinoïdes.
\end{rema}

Voici au final le résultat que nous utilisons dans ce texte.

\begin{theo}\label{theo: purete perfectoide par purete classique}
Soit $X$ un $K$-espace rigide lisse quasicompact quasi-séparé purement de dimension $d>2$ et $Z\subset X(K)$ un sous-ensemble profini. Supposons nous donné un système projectif d'espaces rigides $(X_n)_{n\geq 0}$, $X_0=X$, à morphismes de transition étales finis et un espace perfectoïde $X_\infty$ tel que 
$$
X_\infty \sim \underset{n\geq 0}{\limp} {X_n}.
$$ 
On  a alors 
$$
\text{\'Etales finis }\big / \ X_\infty \iso \text{\'Etales finis } \big / X_\infty \setminus f^{-1}(Z)
$$
où $f:X_\infty \drt X$. 
\end{theo}
\begin{proof}
Il suffit de prendre les ouverts $U_\a$ du théorème \ref{theo: purete en geo rigide}. En effet, 
\begin{eqnarray*}
2-\underset{n\geq 0}{\limi} \text{\'Etales finis }\big / \ X_n & \iso & \text{\'Etales finis } \big /\ X_\infty \\
 2-\underset{n\geq 0}{\limi} \text{\'Etales finis }\big / X_n\times_X U_\a &\iso & \text{\'Etales finis } \big /\ X_\infty\times_X U_\a
\end{eqnarray*}
puisque $X$ et $U_\a$ sont quasicompacts quasi-séparés. Remarquons maintenant que si $Y\drt X_n\times_X U_\a$ est étale fini et $\overline{Y}\drt X$ désigne  l'extension canonique du morphisme étale fini composé
$$
Y\ldrt X_n\times_X U\ldrt U_\a
$$
alors par pleine fidélité de la restriction au dessus de $U$ le morphisme $Y\drt X_n\times_X U_\a $
s'étend de façon unique en un morphisme $\overline{Y}\drt X_n$.
\end{proof}

\begin{rema}
Ofer Gabber a expliqué à l'auteur pouvoir démontrer le théorème précédent lorsque $d=2$ et le corps $K$ est de caractéristique $0$. La méthode de démonstration de Gabber repose sur les résultats de \cite{Lutke1} et est différente de la précédente lorsque $d>2$.  
 Cela permettrait  donc améliorer le théorème \ref{theo: theoreme principal de simple connexite} et de montrer que $\BB^{\ph=\pi^2}_{\Fqb}\setminus \{0\}$ est simplement connexe.
\end{rema}

\bibliographystyle{plain}
\bibliography{biblio}

\begin{thebibliography}{10}

\bibitem{SGA1}
{\em Rev\^etements \'etales et groupe fondamental ({SGA} 1)}, volume~3 of {\em
  Documents Math\'ematiques (Paris) [Mathematical Documents (Paris)]}.
\newblock Soci\'et\'e Math\'ematique de France, Paris, 2003.
\newblock S\'eminaire de g\'eom\'etrie alg\'ebrique du Bois Marie 1960--61.
  [Algebraic Geometry Seminar of Bois Marie 1960-61], Directed by A.
  Grothendieck, With two papers by M. Raynaud, Updated and annotated reprint of
  the 1971 original [Lecture Notes in Math., 224, Springer, Berlin; MR0354651
  (50 \#7129)].

\bibitem{AbbesElements}
A.~Abbes.
\newblock {\em \'{E}l\'ements de g\'eom\'etrie rigide. {V}olume {I}}, volume
  286 of {\em Progress in Mathematics}.
\newblock Birkh\"auser/Springer Basel AG, Basel, 2010.
\newblock Construction et {\'e}tude g{\'e}om{\'e}trique des espaces rigides.
  [Construction and geometric study of rigid spaces], With a preface by Michel
  Raynaud.

\bibitem{Colmez2}
P.~Colmez.
\newblock Espaces de {B}anach de dimension finie.
\newblock {\em J. Inst. Math. Jussieu}, 1(3):331--439, 2002.

\bibitem{GeometrizationReview}
L.~Fargues.
\newblock Geometrization of the local langlands correspondence, an overview.
\newblock {\em \url{https://arxiv.org/abs/1602.00999}}.

\bibitem{ConfLaumon}
L.~Fargues.
\newblock Quelques r\'esultats et conjectures concernant la courbe.
\newblock {\em Ast\'erisque}, (369):325--374, 2015.

\bibitem{Courbe}
L.~Fargues and J.-M. Fontaine.
\newblock Courbes et fibr\'es vectoriels en th\'eorie de {H}odge $p$-adique.
\newblock {\em Pr\'epublication}.

\bibitem{Geometrisation}
L.~Fargues and P.~Scholze.
\newblock Geometrization of the local langlands correspondence.
\newblock {\em En pr{é}paration}.

\bibitem{Frisch}
J.~Frisch and J.~Guenot.
\newblock Prolongement de faisceaux analytiques coh\'erents.
\newblock {\em Invent. Math.}, 7:321--343, 1969.

\bibitem{EGAIII1}
A.~Grothendieck.
\newblock \'{E}l\'ements de g\'eom\'etrie alg\'ebrique. {III}. \'{E}tude
  cohomologique des faisceaux coh\'erents. {I}.
\newblock {\em Inst. Hautes \'Etudes Sci. Publ. Math.}, (11):167, 1961.

\bibitem{SGA2}
A.~Grothendieck.
\newblock {\em Cohomologie locale des faisceaux coh\'erents et th\'eor\`emes de
  {L}efschetz locaux et globaux ({SGA} 2)}.
\newblock Documents Math\'ematiques (Paris) [Mathematical Documents (Paris)],
  4. Soci\'et\'e Math\'ematique de France, Paris, 2005.
\newblock S{\'e}minaire de G{\'e}om{\'e}trie Alg{\'e}brique du Bois Marie,
  1962, Augment{\'e} d'un expos{\'e} de Mich{\`e}le Raynaud. [With an
  expos{\'e} by Mich{\`e}le Raynaud], With a preface and edited by Yves Laszlo,
  Revised reprint of the 1968 French original.

\bibitem{Hu1}
R.~Huber.
\newblock {\em Étale cohomology of rigid analytic varieties and adic spaces.}
\newblock Aspects of Mathematics. Friedr. Vieweg and Sohn, Braunschweig, 1996.

\bibitem{KedlayaLiuRelative1}
K.~S. Kedlaya and R.~Liu.
\newblock Relative {$p$}-adic {H}odge theory: foundations.
\newblock {\em Ast\'erisque}, (371):239, 2015.

\bibitem{LaumonEisenstein}
G.~Laumon.
\newblock Faisceaux automorphes li\'es aux s\'eries d'{E}isenstein.
\newblock In {\em Automorphic forms, {S}himura varieties, and {$L$}-functions,
  {V}ol.\ {I} ({A}nn {A}rbor, {MI}, 1988)}, volume~10 of {\em Perspect. Math.},
  pages 227--281. Academic Press, Boston, MA, 1990.

\bibitem{ArthurBanachColmez}
A.-C. Le~Bras.
\newblock Espaces de {B}anach-{C}olmez et faisceaux cohérents sur la courbe de
  {F}argues-{F}ontaine.
\newblock {\em En pr{é}paration}.

\bibitem{LutkeVektor}
W.~L{\"u}tkebohmert.
\newblock Vektorraumb\"undel \"uber nichtarchimedischen holomorphen {R}\"aumen.
\newblock {\em Math. Z.}, 152(2):127--143, 1977.

\bibitem{LutkeHab}
W.~L{\"u}tkebohmert.
\newblock Zur {T}heorie der {F}ortsetzung rigid-analytischer strukturen.
\newblock {\em Habilitationsschrift Münster}, 1977.

\bibitem{Lutke1}
W.~Lütkebohmert.
\newblock Riemann's existence problem for a $p$-adic field.
\newblock {\em Invent. Math.}, 111(2):309--330, 1993.

\bibitem{NagataPurity}
M.~Nagata.
\newblock On the purity of branch loci in regular local rings.
\newblock {\em Illinois J. Math.}, 3:328--333, 1959.

\bibitem{Ray2}
M.~Raynaud and L.~Gruson.
\newblock Critères de platitude et de projectivité. {T}echniques de
  platification d'un module.
\newblock {\em Invent. Math.}, 13:1--89, 1971.

\bibitem{ScholzeCohomologyDiamonds}
P.~Scholze.
\newblock {\'E}tale cohomology of diamonds.
\newblock {\em En pr{é}paration}.

\bibitem{ScholzeBerkeley}
P.~Scholze.
\newblock Notes du cours de {P}. {S}cholze à {B}erkeley.
\newblock 2014.

\bibitem{SiuThesis}
Y.-T. Siu.
\newblock Extending coherent analytic sheaves.
\newblock {\em Ann. of Math. (2)}, 90:108--143, 1969.

\bibitem{StrauchBoundaries}
M.~Strauch.
\newblock Boundaries of {L}ubin-{T}ate deformation spaces.
\newblock In {\em Mathematisches {I}nstitut, {G}eorg-{A}ugust-{U}niversit\"at
  {G}\"ottingen: {S}eminars 2003/2004}, pages 105--111. Universit\"atsdrucke
  G\"ottingen, G\"ottingen, 2004.

\end{thebibliography}
\end{document}